\begin{document}
\newcommand {\emptycomment}[1]{} 

\baselineskip=14pt
\newcommand{\nc}{\newcommand}
\newcommand{\delete}[1]{}
\nc{\mfootnote}[1]{\footnote{#1}} 
\nc{\todo}[1]{\tred{To do:} #1}

\nc{\mlabel}[1]{\label{#1}}  
\nc{\mcite}[1]{\cite{#1}}  
\nc{\mref}[1]{\ref{#1}}  
\nc{\mbibitem}[1]{\bibitem{#1}} 

\delete{
\nc{\mlabel}[1]{\label{#1}  
{\hfill \hspace{1cm}{\bf{{\ }\hfill(#1)}}}}
\nc{\mcite}[1]{\cite{#1}{{\bf{{\ }(#1)}}}}  
\nc{\mref}[1]{\ref{#1}{{\bf{{\ }(#1)}}}}  
\nc{\mbibitem}[1]{\bibitem[\bf #1]{#1}} 
}

\newtheorem{thm}{Theorem}[section]
\newtheorem{lem}[thm]{Lemma}
\newtheorem{cor}[thm]{Corollary}
\newtheorem{pro}[thm]{Proposition}
\newtheorem{ex}[thm]{Example}
\newtheorem{rmk}[thm]{Remark}
\newtheorem{defi}[thm]{Definition}
\newtheorem{pdef}[thm]{Proposition-Definition}
\newtheorem{condition}[thm]{Condition}

\renewcommand{\labelenumi}{{\rm(\alph{enumi})}}
\renewcommand{\theenumi}{\alph{enumi}}

\nc{\tred}[1]{\textcolor{red}{#1}}
\nc{\tblue}[1]{\textcolor{blue}{#1}}
\nc{\tgreen}[1]{\textcolor{green}{#1}}
\nc{\tpurple}[1]{\textcolor{purple}{#1}}
\nc{\btred}[1]{\textcolor{red}{\bf #1}}
\nc{\btblue}[1]{\textcolor{blue}{\bf #1}}
\nc{\btgreen}[1]{\textcolor{green}{\bf #1}}
\nc{\btpurple}[1]{\textcolor{purple}{\bf #1}}

\nc{\cm}[1]{\textcolor{red}{Chengming:#1}}
\nc{\yy}[1]{\textcolor{blue}{Yanyong: #1}}
\nc{\lit}[2]{\textcolor{blue}{#1}{}} 
\nc{\yh}[1]{\textcolor{green}{Yunhe: #1}}


\nc{\twovec}[2]{\left(\begin{array}{c} #1 \\ #2\end{array} \right )}
\nc{\threevec}[3]{\left(\begin{array}{c} #1 \\ #2 \\ #3 \end{array}\right )}
\nc{\twomatrix}[4]{\left(\begin{array}{cc} #1 & #2\\ #3 & #4 \end{array} \right)}
\nc{\threematrix}[9]{{\left(\begin{matrix} #1 & #2 & #3\\ #4 & #5 & #6 \\ #7 & #8 & #9 \end{matrix} \right)}}
\nc{\twodet}[4]{\left|\begin{array}{cc} #1 & #2\\ #3 & #4 \end{array} \right|}

\nc{\rk}{\mathrm{r}}
\newcommand{\g}{\mathfrak g}
\newcommand{\h}{\mathfrak h}
\newcommand{\pf}{\noindent{$Proof$.}\ }
\newcommand{\frkg}{\mathfrak g}
\newcommand{\frkh}{\mathfrak h}
\newcommand{\Id}{\rm{Id}}
\newcommand{\gl}{\mathfrak {gl}}
\newcommand{\ad}{\mathrm{ad}}
\newcommand{\add}{\frka\frkd}
\newcommand{\frka}{\mathfrak a}
\newcommand{\frkb}{\mathfrak b}
\newcommand{\frkc}{\mathfrak c}
\newcommand{\frkd}{\mathfrak d}
\newcommand {\comment}[1]{{\marginpar{*}\scriptsize\textbf{Comments:} #1}}

\nc{\gensp}{V} 
\nc{\relsp}{\Lambda} 
\nc{\leafsp}{X}    
\nc{\treesp}{\overline{\calt}} 

\nc{\vin}{{\mathrm Vin}}    
\nc{\lin}{{\mathrm Lin}}    

\nc{\gop}{{\,\omega\,}}     
\nc{\gopb}{{\,\nu\,}}
\nc{\svec}[2]{{\tiny\left(\begin{matrix}#1\\
#2\end{matrix}\right)\,}}  
\nc{\ssvec}[2]{{\tiny\left(\begin{matrix}#1\\
#2\end{matrix}\right)\,}} 

\nc{\typeI}{local cocycle $3$-Lie bialgebra\xspace}
\nc{\typeIs}{local cocycle $3$-Lie bialgebras\xspace}
\nc{\typeII}{double construction $3$-Lie bialgebra\xspace}
\nc{\typeIIs}{double construction $3$-Lie bialgebras\xspace}

\nc{\bia}{{$\mathcal{P}$-bimodule ${\bf k}$-algebra}\xspace}
\nc{\bias}{{$\mathcal{P}$-bimodule ${\bf k}$-algebras}\xspace}

\nc{\rmi}{{\mathrm{I}}}
\nc{\rmii}{{\mathrm{II}}}
\nc{\rmiii}{{\mathrm{III}}}
\nc{\pr}{{\mathrm{pr}}}
\newcommand{\huaA}{\mathcal{A}}

\nc{\pll}{\beta}
\nc{\plc}{\epsilon}

\nc{\ass}{{\mathit{Ass}}}
\nc{\lie}{{\mathit{Lie}}}
\nc{\comm}{{\mathit{Comm}}}
\nc{\dend}{{\mathit{Dend}}}
\nc{\zinb}{{\mathit{Zinb}}}
\nc{\tdend}{{\mathit{TDend}}}
\nc{\prelie}{{\mathit{preLie}}}
\nc{\postlie}{{\mathit{PostLie}}}
\nc{\quado}{{\mathit{Quad}}}
\nc{\octo}{{\mathit{Octo}}}
\nc{\ldend}{{\mathit{ldend}}}
\nc{\lquad}{{\mathit{LQuad}}}

 \nc{\adec}{\check{;}} \nc{\aop}{\alpha}
\nc{\dftimes}{\widetilde{\otimes}} \nc{\dfl}{\succ} \nc{\dfr}{\prec}
\nc{\dfc}{\circ} \nc{\dfb}{\bullet} \nc{\dft}{\star}
\nc{\dfcf}{{\mathbf k}} \nc{\apr}{\ast} \nc{\spr}{\cdot}
\nc{\twopr}{\circ} \nc{\tspr}{\star} \nc{\sempr}{\ast}
\nc{\disp}[1]{\displaystyle{#1}}
\nc{\bin}[2]{ (_{\stackrel{\scs{#1}}{\scs{#2}}})}  
\nc{\binc}[2]{ \left (\!\! \begin{array}{c} \scs{#1}\\
    \scs{#2} \end{array}\!\! \right )}  
\nc{\bincc}[2]{  \left ( {\scs{#1} \atop
    \vspace{-.5cm}\scs{#2}} \right )}  
\nc{\sarray}[2]{\begin{array}{c}#1 \vspace{.1cm}\\ \hline
    \vspace{-.35cm} \\ #2 \end{array}}
\nc{\bs}{\bar{S}} \nc{\dcup}{\stackrel{\bullet}{\cup}}
\nc{\dbigcup}{\stackrel{\bullet}{\bigcup}} \nc{\etree}{\big |}
\nc{\la}{\longrightarrow} \nc{\fe}{\'{e}} \nc{\rar}{\rightarrow}
\nc{\dar}{\downarrow} \nc{\dap}[1]{\downarrow
\rlap{$\scriptstyle{#1}$}} \nc{\uap}[1]{\uparrow
\rlap{$\scriptstyle{#1}$}} \nc{\defeq}{\stackrel{\rm def}{=}}
\nc{\dis}[1]{\displaystyle{#1}} \nc{\dotcup}{\,
\displaystyle{\bigcup^\bullet}\ } \nc{\sdotcup}{\tiny{
\displaystyle{\bigcup^\bullet}\ }} \nc{\hcm}{\ \hat{,}\ }
\nc{\hcirc}{\hat{\circ}} \nc{\hts}{\hat{\shpr}}
\nc{\lts}{\stackrel{\leftarrow}{\shpr}}
\nc{\rts}{\stackrel{\rightarrow}{\shpr}} \nc{\lleft}{[}
\nc{\lright}{]} \nc{\uni}[1]{\tilde{#1}} \nc{\wor}[1]{\check{#1}}
\nc{\free}[1]{\bar{#1}} \nc{\den}[1]{\check{#1}} \nc{\lrpa}{\wr}
\nc{\curlyl}{\left \{ \begin{array}{c} {} \\ {} \end{array}
    \right .  \!\!\!\!\!\!\!}
\nc{\curlyr}{ \!\!\!\!\!\!\!
    \left . \begin{array}{c} {} \\ {} \end{array}
    \right \} }
\nc{\leaf}{\ell}       
\nc{\longmid}{\left | \begin{array}{c} {} \\ {} \end{array}
    \right . \!\!\!\!\!\!\!}
\nc{\ot}{\otimes} \nc{\sot}{{\scriptstyle{\ot}}}
\nc{\otm}{\overline{\ot}}
\nc{\ora}[1]{\stackrel{#1}{\rar}}
\nc{\ola}[1]{\stackrel{#1}{\la}}
\nc{\pltree}{\calt^\pl}
\nc{\epltree}{\calt^{\pl,\NC}}
\nc{\rbpltree}{\calt^r}
\nc{\scs}[1]{\scriptstyle{#1}} \nc{\mrm}[1]{{\rm #1}}
\nc{\dirlim}{\displaystyle{\lim_{\longrightarrow}}\,}
\nc{\invlim}{\displaystyle{\lim_{\longleftarrow}}\,}
\nc{\mvp}{\vspace{0.5cm}} \nc{\svp}{\vspace{2cm}}
\nc{\vp}{\vspace{8cm}} \nc{\proofbegin}{\noindent{\bf Proof: }}
\nc{\proofend}{$\blacksquare$ \vspace{0.5cm}}
\nc{\freerbpl}{{F^{\mathrm RBPL}}}
\nc{\sha}{{\mbox{\cyr X}}}  
\nc{\ncsha}{{\mbox{\cyr X}^{\mathrm NC}}} \nc{\ncshao}{{\mbox{\cyr
X}^{\mathrm NC,\,0}}}
\nc{\shpr}{\diamond}    
\nc{\shprm}{\overline{\diamond}}    
\nc{\shpro}{\diamond^0}    
\nc{\shprr}{\diamond^r}     
\nc{\shpra}{\overline{\diamond}^r}
\nc{\shpru}{\check{\diamond}} \nc{\catpr}{\diamond_l}
\nc{\rcatpr}{\diamond_r} \nc{\lapr}{\diamond_a}
\nc{\sqcupm}{\ot}
\nc{\lepr}{\diamond_e} \nc{\vep}{\varepsilon} \nc{\labs}{\mid\!}
\nc{\rabs}{\!\mid} \nc{\hsha}{\widehat{\sha}}
\nc{\lsha}{\stackrel{\leftarrow}{\sha}}
\nc{\rsha}{\stackrel{\rightarrow}{\sha}} \nc{\lc}{\lfloor}
\nc{\rc}{\rfloor}
\nc{\tpr}{\sqcup}
\nc{\nctpr}{\vee}
\nc{\plpr}{\star}
\nc{\rbplpr}{\bar{\plpr}}
\nc{\sqmon}[1]{\langle #1\rangle}
\nc{\forest}{\calf}
\nc{\altx}{\Lambda_X} \nc{\vecT}{\vec{T}} \nc{\onetree}{\bullet}
\nc{\Ao}{\check{A}}
\nc{\seta}{\underline{\Ao}}
\nc{\deltaa}{\overline{\delta}}
\nc{\trho}{\tilde{\rho}}

\nc{\rpr}{\circ}
\nc{\dpr}{{\tiny\diamond}}
\nc{\rprpm}{{\rpr}}

\nc{\mmbox}[1]{\mbox{\ #1\ }} \nc{\ann}{\mrm{ann}}
\nc{\Aut}{\mrm{Aut}} \nc{\can}{\mrm{can}}
\nc{\twoalg}{{two-sided algebra}\xspace}
\nc{\colim}{\mrm{colim}}
\nc{\Cont}{\mrm{Cont}} \nc{\rchar}{\mrm{char}}
\nc{\cok}{\mrm{coker}} \nc{\dtf}{{R-{\rm tf}}} \nc{\dtor}{{R-{\rm
tor}}}
\renewcommand{\det}{\mrm{det}}
\nc{\depth}{{\mrm d}}
\nc{\Div}{{\mrm Div}} \nc{\End}{\mrm{End}} \nc{\Ext}{\mrm{Ext}}
\nc{\Fil}{\mrm{Fil}} \nc{\Frob}{\mrm{Frob}} \nc{\Gal}{\mrm{Gal}}
\nc{\GL}{\mrm{GL}} \nc{\Hom}{\mrm{Hom}} \nc{\hsr}{\mrm{H}}
\nc{\hpol}{\mrm{HP}} \nc{\id}{\mrm{id}} \nc{\im}{\mrm{im}}
\nc{\incl}{\mrm{incl}} \nc{\length}{\mrm{length}}
\nc{\LR}{\mrm{LR}} \nc{\mchar}{\rm char} \nc{\NC}{\mrm{NC}}
\nc{\mpart}{\mrm{part}} \nc{\pl}{\mrm{PL}}
\nc{\ql}{{\QQ_\ell}} \nc{\qp}{{\QQ_p}}
\nc{\rank}{\mrm{rank}} \nc{\rba}{\rm{RBA }} \nc{\rbas}{\rm{RBAs }}
\nc{\rbpl}{\mrm{RBPL}}
\nc{\rbw}{\rm{RBW }} \nc{\rbws}{\rm{RBWs }} \nc{\rcot}{\mrm{cot}}
\nc{\rest}{\rm{controlled}\xspace}
\nc{\rdef}{\mrm{def}} \nc{\rdiv}{{\rm div}} \nc{\rtf}{{\rm tf}}
\nc{\rtor}{{\rm tor}} \nc{\res}{\mrm{res}} \nc{\SL}{\mrm{SL}}
\nc{\Spec}{\mrm{Spec}} \nc{\tor}{\mrm{tor}} \nc{\Tr}{\mrm{Tr}}
\nc{\mtr}{\mrm{sk}}

\nc{\ab}{\mathbf{Ab}} \nc{\Alg}{\mathbf{Alg}}
\nc{\Algo}{\mathbf{Alg}^0} \nc{\Bax}{\mathbf{Bax}}
\nc{\Baxo}{\mathbf{Bax}^0} \nc{\RB}{\mathbf{RB}}
\nc{\RBo}{\mathbf{RB}^0} \nc{\BRB}{\mathbf{RB}}
\nc{\Dend}{\mathbf{DD}} \nc{\bfk}{{\bf k}} \nc{\bfone}{{\bf 1}}
\nc{\base}[1]{{a_{#1}}} \nc{\detail}{\marginpar{\bf More detail}
    \noindent{\bf Need more detail!}
    \svp}
\nc{\Diff}{\mathbf{Diff}} \nc{\gap}{\marginpar{\bf
Incomplete}\noindent{\bf Incomplete!!}
    \svp}
\nc{\FMod}{\mathbf{FMod}} \nc{\mset}{\mathbf{MSet}}
\nc{\rb}{\mathrm{RB}} \nc{\Int}{\mathbf{Int}}
\nc{\Mon}{\mathbf{Mon}}
\nc{\remarks}{\noindent{\bf Remarks: }}
\nc{\OS}{\mathbf{OS}} 
\nc{\Rep}{\mathbf{Rep}}
\nc{\Rings}{\mathbf{Rings}} \nc{\Sets}{\mathbf{Sets}}
\nc{\DT}{\mathbf{DT}}

\nc{\BA}{{\mathbb A}} \nc{\CC}{{\mathbb C}} \nc{\DD}{{\mathbb D}}
\nc{\EE}{{\mathbb E}} \nc{\FF}{{\mathbb F}} \nc{\GG}{{\mathbb G}}
\nc{\HH}{{\mathbb H}} \nc{\LL}{{\mathbb L}} \nc{\NN}{{\mathbb N}}
\nc{\QQ}{{\mathbb Q}} \nc{\RR}{{\mathbb R}} \nc{\BS}{{\mathbb{S}}} \nc{\TT}{{\mathbb T}}
\nc{\VV}{{\mathbb V}} \nc{\ZZ}{{\mathbb Z}}


\nc{\calao}{{\mathcal A}} \nc{\cala}{{\mathcal A}}
\nc{\calc}{{\mathcal C}} \nc{\cald}{{\mathcal D}}
\nc{\cale}{{\mathcal E}} \nc{\calf}{{\mathcal F}}
\nc{\calfr}{{{\mathcal F}^{\,r}}} \nc{\calfo}{{\mathcal F}^0}
\nc{\calfro}{{\mathcal F}^{\,r,0}} \nc{\oF}{\overline{F}}
\nc{\calg}{{\mathcal G}} \nc{\calh}{{\mathcal H}}
\nc{\cali}{{\mathcal I}} \nc{\calj}{{\mathcal J}}
\nc{\call}{{\mathcal L}} \nc{\calm}{{\mathcal M}}
\nc{\caln}{{\mathcal N}} \nc{\calo}{{\mathcal O}}
\nc{\calp}{{\mathcal P}} \nc{\calq}{{\mathcal Q}} \nc{\calr}{{\mathcal R}}
\nc{\calt}{{\mathcal T}} \nc{\caltr}{{\mathcal T}^{\,r}}
\nc{\calu}{{\mathcal U}} \nc{\calv}{{\mathcal V}}
\nc{\calw}{{\mathcal W}} \nc{\calx}{{\mathcal X}}
\nc{\CA}{\mathcal{A}}

\nc{\fraka}{{\mathfrak a}} \nc{\frakB}{{\mathfrak B}}
\nc{\frakb}{{\mathfrak b}} \nc{\frakd}{{\mathfrak d}}
\nc{\oD}{\overline{D}}
\nc{\frakF}{{\mathfrak F}} \nc{\frakg}{{\mathfrak g}}
\nc{\frakm}{{\mathfrak m}} \nc{\frakM}{{\mathfrak M}}
\nc{\frakMo}{{\mathfrak M}^0} \nc{\frakp}{{\mathfrak p}}
\nc{\frakS}{{\mathfrak S}} \nc{\frakSo}{{\mathfrak S}^0}
\nc{\fraks}{{\mathfrak s}} \nc{\os}{\overline{\fraks}}
\nc{\frakT}{{\mathfrak T}}
\nc{\oT}{\overline{T}}
\nc{\frakX}{{\mathfrak X}} \nc{\frakXo}{{\mathfrak X}^0}
\nc{\frakx}{{\mathbf x}}
\nc{\frakTx}{\frakT}      
\nc{\frakTa}{\frakT^a}        
\nc{\frakTxo}{\frakTx^0}   
\nc{\caltao}{\calt^{a,0}}   
\nc{\ox}{\overline{\frakx}} \nc{\fraky}{{\mathfrak y}}
\nc{\frakz}{{\mathfrak z}} \nc{\oX}{\overline{X}}

\font\cyr=wncyr10

\nc{\redtext}[1]{\textcolor{red}{#1}}


\title{On antisymmetric infinitesimal conformal bialgebras}

\author{Yanyong Hong}
\address{Department of Mathematics, Hangzhou Normal University,
Hangzhou 311121, PR China}
\email{yyhong@hznu.edu.cn}

\author{Chengming Bai}
\address{Chern Institute of Mathematics \& LPMC, Nankai University, Tianjin 300071, PR China}
\email{baicm@nankai.edu.cn}

\subjclass[2010]{16D20, 16D70, 17A30, 17B38} \keywords{associative
conformal algebra, dendriform conformal algebra, associative
conformal Yang-Baxter equation, $\mathcal{O}$-operator,
Rota-Baxter operator}

\begin{abstract}
In this paper, we construct a bialgebra theory for associative
conformal algebras, namely antisymmetric infinitesimal conformal
bialgebras. On the one hand, it is an attempt to give conformal
structures for antisymmetric infinitesimal bialgebras. On the
other hand, under certain conditions, such structures are
equivalent to double constructions of Frobenius conformal
algebras, which are associative conformal algebras that are
decomposed into the direct sum of another associative conformal
algebra and its conformal dual as $\mathbb{C}[\partial]$-modules
such that both of them are subalgebras and the natural conformal
bilinear form is invariant. The coboundary case leads to the
introduction of associative conformal Yang-Baxter equation whose
antisymmetric solutions give antisymmetric infinitesimal conformal
bialgebras. Moreover, the construction of antisymmetric solutions
of associative conformal Yang-Baxter equation is given from
$\mathcal{O}$-operators of associative conformal algebras as well
as dendriform conformal algebras.
\end{abstract}

\maketitle

\section{Introduction}
The theory of Lie conformal algebras appeared as a formal language
describing the algebraic properties of the operator product
expansion in two-dimensional conformal field theory (\cite{K1}).
In particular, Lie conformal algebras turn out to be valuable
tools in studying vertex algebras and Hamiltonian formalism in the
theory of nonlinear evolution equations (\cite{BDK}).
Moreover, Lie conformal algebras have close connections to
infinite-dimensional Lie algebras satisfying the locality property
(\cite{K}). The conformal analogues of associative algebras,
namely, associative conformal algebras naturally appeared in the
representation theory of Lie conformal algebras (\cite{DK1}). They were studied widely on the structure theory
(\cite{BFK1,BFK2,BFK3,BKV,D,H1,K2,Ko1,Ko3,R1,R2,Ro1,Ro2,Ro3,Z1,Z2}) as well as
representation theory (\cite{BKL,Ko2,Ko4}). We
would like to point that there are the ``conformal analogues" for
certain algebras besides Lie and associative algebras or the
``conformal structures"
 of these algebras such as left-symmetric conformal
algebras (\cite{HL1}) and Jordan conformal algebras (\cite{KA}).

It is natural to extend such structures to the bialgebras, that
is, consider the conformal analogues of bialgebras. In the case of
Lie bialgebras, Liberati in \cite{L} developed a theory of Lie
conformal bialgebras including the introduction of the notions of
conformal classical Yang-Baxter equation, conformal Manin triples
and conformal Drinfeld's doubles. Similarly, a theory of
left-symmetric conformal bialgebras was developed in \cite{HL},
which are equivalent to a class of special Lie conformal algebras
named parak\"ahler Lie conformal algebras and the notion of
conformal $S$-equation was introduced in the coboundary case.
Moreover, the operator forms of the conformal classical
Yang-Baxter equation and the conformal $S$-equation were studied
in \cite{HB}, which shows that the antisymmetric solutions of the
conformal classical Yang-Baxter equation  and the symmetric
solutions of the conformal $S$-equation can be interpreted in
terms of a kind of operators called $\mathcal{O}$-operators in the
conformal sense.

But as far as we know, there is not a conformal analogue of
``associative bialgebras" yet. In fact, there are two kinds of
``associative bialgebras". One is the usual bialgebras in the
theory of Hopf algebras, which the coproducts are homomorphisms of
the products. Another is the infinitesimal bialgebras, which the
coproducts are ``derivations" of the products in certain sense,
introduced by Joni and Rota in \cite{JR} in order to provide an
algebraic framework for the calculus of divided difference. In
particular, for the latter, in the case of antisymmetric
infinitesimal (ASI) bialgebras which are called ``associative
D-algebras" in \cite{Zhe} or ``balanced infinitesimal bialgebras"
in the sense of the opposite algebras in \cite{A1}, there is a
systematic study in terms of their equivalences with double
constructions of Frobenius algebras as well as their relationships
with associative Yang-Baxter equation (\cite{Bai1}).



In this paper, we develop  a ``conformal" theory for antisymmetric infinitesimal bialgebras, namely antisymmetric infinitesimal (ASI) conformal
bialgebras. It is also a bialgebra theory for associative
conformal algebras. That is, the following diagram is commutative:

\bigskip
\begin{center}
\unitlength 1mm 
\linethickness{0.4pt}
\ifx\plotpoint\undefined\newsavebox{\plotpoint}\fi 
\begin{picture}(83.5,34)(0,0)
\put(5,6){associative algebras}
\put(72.5,6.5){associative conformal algebras}
\put(6.5,31.75){ASI bialgebras}
\put(71.75,30.75){ASI conformal bialgebras}
\put(83,12.5){\vector(0,1){13.5}}
\put(18,12.5){\vector(0,1){13.5}}
\put(38.75,34){conformal structures}
\put(38.75,8.75){conformal structures}
\put(83.5,18.5){bialgebra structures }
\put(18.5,18.75){bialgebra structures}
\put(38.5,32){\vector(1,0){32.25}}
\put(38.5,6.75){\vector(1,0){32.25}}
\end{picture}\qquad \qquad \qquad\qquad \qquad \qquad\qquad
\end{center}
We would like to point out that such an approach might not be
available for considering the conformal analogues of the usual
(associative) bialgebras and even Hopf algebras, but can help to
shed light on further studies on the latter.

The main idea is to extend the study of ASI bialgebras given in
\cite{Bai1} to the ``conformal case". Explicitly, we first
introduce the notion of double constructions of Frobenius
conformal algebras as  a conformal analogue of double
constructions of Frobenius algebras, which are associative
conformal algebras that are decomposed into the direct sum of
another associative conformal algebra and its conformal dual as a
$\mathbb{C}[\partial]$-module such that both of them are
subalgebras and the natural conformal bilinear form is invariant.
Such structures are interpreted equivalently in terms of matched
pairs of associative conformal algebras which were introduced in
\cite{H1}. Finally the notion of antisymmetric infinitesimal (ASI)
conformal bialgebras is introduced as equivalent structures of the
aforementioned matched pairs of associative conformal algebras as
well as the double constructions of Frobenius conformal algebras.
Note that the notion of ASI conformal bialgebras is available for
any associative conformal algebra, whereas the equivalence with
double constructions of Frobenius conformal algebras is available
for the associative conformal algebras which are finitely
generated and free as $\mathbb{C}[\partial]$-modules.

As in the case of ASI bialgebras, the definition of  coboundary
ASI conformal bialgebra is introduced and its study 
is  also meaningful.  It leads to the introduction of
associative conformal Yang-Baxter equation as a conformal analogue
of the associative Yang-Baxter equation. In particular,  its
antisymmetric solutions give ASI conformal bialgebras. The
associative conformal Yang-Baxter equation is interpreted in terms
of its operator forms by introducing the notion of
$\mathcal{O}$-operators of associative conformal algebras,
especially an antisymmetric solution of the associative conformal
Yang-Baxter equation corresponds to the skew-symmetric part of a
conformal linear map $T$, where $T_0=T_\lambda\mid_{\lambda=0}$ is
an $\mathcal{O}$-operator in the conformal sense and moreover, an
$\mathcal O$-operator of an associative conformal algebra gives an
antisymmetric solution of associative conformal Yang-Baxter
equation in a semi-direct product associative conformal algebra.
Furthermore, we introduce the notion of dendriform conformal
algebras and show that for a dendriform conformal algebra which is
finite and free as a $\mathbb{C}[\partial]$-module, there is a
natural $\mathcal O$-operator on the associated conformal
associative algebra. Therefore there are constructions of
antisymmetric solutions of associative conformal Yang-Baxter
equation and hence ASI conformal bialgebras from
$\mathcal{O}$-operators of associative conformal algebras as well
as dendriform conformal algebras.

This paper is organized as follows. In Section 2, the notions of
an associative conformal algebra, its bimodule and a matched pair
of associative conformal algebras are recalled. 
In Section 3, we introduce the notion of double constructions
of Frobenius conformal algebras and study  their relationship with
matched pairs of associative conformal algebras. In Section 4, the
notion of ASI conformal bialgebras is introduced  as (under
certain conditions) equivalent structures of the aforementioned
matched pair of associative conformal algebras as well as the
double constructions of Frobenius conformal algebras. Section 5 is
devoted to studying the coboundary case of ASI conformal bialgebras
and the associative conformal Yang-Baxter equation is introduced.
In Section 6, we introduce the notions of $\mathcal O$-operators
of associative conformal algebras and dendriform conformal
algebras to construct (antisymmetric) solutions of associative
conformal Yang-Baxter equation and hence give ASI conformal
bialgebras.

Throughout this paper, we denote by $\mathbb{C}$ the field of complex
numbers. All tensors over $\mathbb{C}$ are denoted by $\otimes$. We denote the identity map by $I$.
Moreover, if $A$ is a vector space, then the space of polynomials of $\lambda$ with coefficients in $A$ is denoted by $A[\lambda]$.

\section{Preliminaries on  associative conformal algebras}
We recall the notions of an associative conformal algebra, its
bimodule and  a matched pair of associative conformal algebras.
The interested readers may consult  \cite{K1} and \cite{H1} for
more details. 

\begin{defi}\label{def1}{\rm
A {\bf conformal algebra} $R$ is a $\mathbb{C}[\partial]$-module
endowed with a $\mathbb{C}$-bilinear map $\cdot_\lambda \cdot: R\times R\rightarrow
 R[\lambda]$, $(a, b)\mapsto
a_{\lambda} b$ satisfying
\begin{eqnarray}
(\partial a)_{\lambda}b=-\lambda a_{\lambda}b,  \quad
a_{\lambda}(\partial b)=(\partial+\lambda)a_{\lambda}b,
\quad\forall\ a,b\in R.~~~~~~\quad\text{(conformal sesquilinearity)}
\end{eqnarray}

An {\bf associative conformal algebra} $R$ is a conformal algebra with the $\mathbb{C}$-bilinear
map $\cdot_\lambda \cdot: R\times R\rightarrow R[\lambda]$ satisfying
\begin{eqnarray}
(a_{\lambda}b)_{\lambda+\mu}c=a_{\lambda}(b_\mu c), \quad \forall\
a,b,c\in R.
\end{eqnarray}

}
\end{defi}

A  conformal algebra is called {\bf finite} if it is finitely
generated as a $\mathbb{C}[\partial]$-module. The {\bf rank} of a
conformal algebra $R$ is its rank as a
$\mathbb{C}[\partial]$-module. The notions of a homomorphism, an
ideal and a subalgebra of an associative conformal algebra are
defined as usual.


\begin{ex}
Let $(A,\cdot)$ be an associative algebra. Then
$\text{Cur}(A)=\mathbb{C}[\partial]\otimes A$ is an associative
conformal algebra with the following $\lambda$-product:
\begin{eqnarray*}
(p(\partial)a)_\lambda (q(\partial)b)=p(-\lambda)q(\lambda+\partial)
(a\cdot b), \;\;\forall\ \text{$p(\partial)$, $q(\partial)\in
\mathbb{C}[\partial]$, $a$, $b\in A$.}
\end{eqnarray*}
\end{ex}

\begin{defi}\label{deff1}{\rm
A {\bf left module} $M$ over an associative conformal algebra $A$ is a $\mathbb{C}[\partial]$-module endowed with a $\mathbb{C}$-bilinear map
$A\times M\longrightarrow M[\lambda]$, $(a, v)\mapsto a\rightharpoonup_\lambda v$, satisfying the following axioms $(\forall\ a, b\in A, v\in M)$:\\
(LM1)$\qquad\qquad (\partial a)\rightharpoonup_\lambda v=-\lambda a\rightharpoonup_\lambda v,~~~a\rightharpoonup_\lambda(\partial v)=(\partial+\lambda)(a\rightharpoonup_\lambda v),$\\
(LM2)$\qquad\qquad (a_\lambda b)\rightharpoonup_{\lambda+\mu}v=a\rightharpoonup_\lambda(b\rightharpoonup_\mu v).$\\
We denote it by $(M,\rightharpoonup_\lambda)$.

A {\bf right module} $M$ over an associative conformal algebra $A$ is a $\mathbb{C}[\partial]$-module endowed with a $\mathbb{C}$-bilinear map
$M\times A\longrightarrow M[\lambda]$, $(v, a)\mapsto v\leftharpoonup_\lambda a$, satisfying the following axioms $(\forall\ a, b\in A, v\in M)$:\\
(RM1)$\qquad\qquad (\partial v)\leftharpoonup_\lambda a=-\lambda v\leftharpoonup_\lambda a,~~~v\leftharpoonup_\lambda(\partial a)=(\partial+\lambda)(v\leftharpoonup_\lambda a),$\\
(RM2)$\qquad\qquad (v\leftharpoonup_\lambda a)\leftharpoonup_{\lambda+\mu}b=v\leftharpoonup_\lambda(a_\mu b).$\\
We denote it by $(M,\leftharpoonup_\lambda)$.

An {\bf $A$-bimodule} is a triple
$(M,\rightharpoonup_\lambda,\leftharpoonup_\lambda)$ such that
$(M,\rightharpoonup_\lambda)$ is a left $A$-module,
$(M,\leftharpoonup_\lambda)$ is a right $A$-module, and they satisfy the
following condition
\begin{eqnarray}
(a\rightharpoonup_\lambda
v)\leftharpoonup_{\lambda+\mu}b=a\rightharpoonup_\lambda(v\leftharpoonup_\mu
b),\;\;\forall\ a,b\in A, v\in M.
\end{eqnarray}
}\end{defi}

\begin{defi}{\rm
Let $U$ and $V$ be two $\mathbb{C}[\partial]$-modules. A {\bf conformal linear map} from $U$ to $V$ is a $\mathbb{C}$-linear map $a: U\rightarrow V[\lambda]$, denoted by $a_\lambda: U\rightarrow V$, such that $[\partial, a_\lambda]=-\lambda a_\lambda$. Denote the vector space of all such maps by $Chom(U,V)$. It has a canonical structure of a $\mathbb{C}[\partial]$-module
$$(\partial a)_\lambda =-\lambda a_\lambda.$$ Define the {\bf conformal dual} of a $\mathbb{C}[\partial]$-module $U$ as
$U^{\ast c}=Chom(U,\mathbb{C})$ where $\mathbb{C}$ is viewed as
the trivial $\mathbb{C}[\partial]$-module, that is,
$$U^{\ast c}=\{a:U\rightarrow \mathbb{C}[\lambda]|\mathbb{C}\mbox{-linear~~and~~}a_\lambda(\partial b)=\lambda a_\lambda
b,\forall\ b\in U\}.$$}
\end{defi}

In the special case $U=V$, set $Cend(V)=Chom(V,V)$. If $V$ is a
finite $\mathbb{C}[\partial]$-module, then the
$\mathbb{C}[\partial]$-module $Cend(V)$ has a canonical structure
of an associative conformal algebra defined  by
\begin{eqnarray*}
(a_\lambda b)_\mu v=a_\lambda (b_{\mu-\lambda} v),\quad \forall\
a,b\in Cend(V), v\in V.\end{eqnarray*}

Set $a\rightharpoonup_\lambda v=l_A(a)_\lambda v$ and
$v\leftharpoonup_\lambda a=r_A(a)_{-\lambda-\partial} v$. Then a
structure of a bimodule $M$ over an associative conformal algebra
$A$ is the same as two $\mathbb{C}[\partial]$-module homomorphisms
$l_A$ and $r_A$ from $A$ to $Cend(M)$ such that the following
conditions hold:
\begin{eqnarray}
l_A(a_\lambda b)_{\lambda+\mu}v=l_A(a)_\lambda (l_A(b)_\mu v),\\
r_A(b)_{-\lambda-\mu-\partial}(r_A(a)_{-\lambda-\partial} v)=r_A(a_\mu b)_{-\lambda-\partial}v,\\
\label{eq1} l_A(a)_\lambda
(r_A(b)_{-\mu-\partial}v)=r_A(b)_{-\lambda-\mu-\partial}(l_A(a)_\lambda
v),\end{eqnarray} for all $a$, $b\in A$ and $v\in M$. We denote
this bimodule by $(M,l_A, r_A)$.

\begin{pro}\label{pr1}
Let $(M, l_A, r_A)$ be a finite bimodule of an associative conformal
algebra $A$. Let $l_A^\ast$ and $r_A^\ast$ be two
$\mathbb{C}[\partial]$-module homomorphisms from $A$ to
$Cend(M^{\ast c})$ defined by
\begin{eqnarray}
(l_A^\ast(a)_\lambda f)_\mu u=f_{\mu-\lambda}(l_A(a)_\lambda
u),~(r_A^\ast(a)_\lambda f)_\mu u=f_{\mu-\lambda}(r_A(a)_\lambda
u), \;\;\forall\ a\in A, f\in M^{\ast c}, u\in M.\end{eqnarray}
Then $(M^{\ast c},r_A^\ast, l_A^\ast)$ is a bimodule of $A$.
\end{pro}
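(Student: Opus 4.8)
The plan is to verify directly that the triple $(M^{\ast c}, r_A^\ast, l_A^\ast)$ satisfies the three defining identities of an $A$-bimodule, bearing in mind that in the dual the roles are interchanged: $r_A^\ast$ serves as the \emph{left} action and $l_A^\ast$ as the \emph{right} action. Thus the three conditions to be checked are obtained from the displayed bimodule axioms by the substitution $l_A\mapsto r_A^\ast$, $r_A\mapsto l_A^\ast$; concretely one must show $r_A^\ast(a_\lambda b)_{\lambda+\mu}f=r_A^\ast(a)_\lambda(r_A^\ast(b)_\mu f)$, the analogous right-module identity for $l_A^\ast$, and the compatibility identity \eqref{eq1} with the starred maps. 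Before this, I would record the two routine facts that the statement already presupposes: that $r_A^\ast(a)_\lambda f$ and $l_A^\ast(a)_\lambda f$ again lie in $M^{\ast c}$, and that $a\mapsto r_A^\ast(a)$, $a\mapsto l_A^\ast(a)$ are $\mathbb{C}[\partial]$-module homomorphisms into $Cend(M^{\ast c})$. For instance $(r_A^\ast(a)_\lambda f)_\mu(\partial u)=f_{\mu-\lambda}(r_A(a)_\lambda(\partial u))=f_{\mu-\lambda}((\partial+\lambda)r_A(a)_\lambda u)=\mu\,(r_A^\ast(a)_\lambda f)_\mu u$, using the conformal linearity of $r_A(a)_\lambda$ and the conformal-dual axiom $f_\sigma(\partial u)=\sigma f_\sigma u$; the homomorphism property is a similar short computation from sesquilinearity.

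The mechanism for each of the three identities is the same: evaluate both sides on an arbitrary $u\in M$ at an auxiliary parameter $\nu$, unfold the defining formulas for $r_A^\ast$ and $l_A^\ast$, and reduce to the corresponding original axiom paired against $f$. The left-module identity for $r_A^\ast$ reduces to the right-module axiom for $r_A$ (the second displayed bimodule condition), the right-module identity for $l_A^\ast$ reduces to the left-module axiom for $l_A$ (the first), and the compatibility identity reduces to \eqref{eq1}. For example, unfolding the left-module identity for $r_A^\ast$ turns the goal into the equality $f_\sigma\big(r_A(a_\lambda b)_{\lambda+\mu}u\big)=f_\sigma\big(r_A(b)_\mu(r_A(a)_\lambda u)\big)$ for the appropriate $\sigma$, which I would derive from the right-module axiom $r_A(b)_{-\lambda'-\mu'-\partial}(r_A(a)_{-\lambda'-\partial}u)=r_A(a_{\mu'}b)_{-\lambda'-\partial}u$ after relabelling the product parameter.

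The one genuinely delicate point — and the main obstacle — is the handling of the substitutions of the form $r_A(a)_{-\lambda-\partial}$ that are built into the right-module axiom and into the very definition of a bimodule. Two moves are needed repeatedly. First, to push an action map past such a $\partial$ one uses conformal linearity in the form $r_A(b)_\mu\circ(-\lambda'-\partial)=(-\lambda'-\mu-\partial)\circ r_A(b)_\mu$, so that a substitution $-\lambda'-\partial$ living to the right of $r_A(b)_\mu$ is transported to a substitution $-\lambda'-\mu-\partial$ on the outside. Second, once every remaining $\partial$ has been collected outside all action maps, it is absorbed into the pairing parameter through $f_\sigma(\partial w)=\sigma f_\sigma w$, i.e.\ $f_\sigma\big(r_A(c)_{-\kappa-\partial}w\big)=f_\sigma\big(r_A(c)_{-\kappa-\sigma}w\big)$. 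The whole verification then comes down to checking that, after these transpositions, the accumulated shifts in $\lambda,\mu,\sigma$ collapse to exactly the parameters demanded by the target identity; this bookkeeping, rather than any conceptual difficulty, is where the care is required. Carrying out the same two-step procedure for the remaining two identities, now invoking the left-module axiom and the compatibility identity \eqref{eq1} respectively, completes the proof.
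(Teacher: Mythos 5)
Your proposal is correct and follows essentially the same route as the paper: the paper likewise verifies the three bimodule identities for $(M^{\ast c},r_A^\ast,l_A^\ast)$ by evaluating both sides on $u\in M$ at an auxiliary parameter $\nu$, unfolding the defining formulas, and reducing to the right-module axiom for $r_A$, the left-module axiom for $l_A$, and \eqref{eq1}, respectively, exactly as you describe. Your preliminary check that $r_A^\ast(a)_\lambda f$ and $l_A^\ast(a)_\lambda f$ land in $M^{\ast c}$ is a small addition the paper leaves implicit.
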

\begin{proof}
Let $a$, $b\in A$, $f\in M^{\ast c}$ and $u\in M$. Since
\begin{eqnarray*}
(r_A^\ast(a)_\lambda(r_A^\ast(b)_\mu f))_\nu u&=&(r_A^\ast(b)_\mu
f)_{\nu-\lambda}(r_A(a)_\lambda u)
=f_{\nu-\lambda-\mu}(r_A(b)_\mu(r_A(a)_\lambda
u))\\&=&f_{\nu-\lambda-\mu}(r_A(a_\lambda b)_{\lambda+\mu}u)
=(r_A^\ast(a_\lambda b)_{\lambda+\mu} f)_\nu u;\\
(l_A^\ast(b)_{-\lambda-\mu-\partial}(l_A^\ast(a)_{-\lambda-\partial}
f))_\nu u
&=&(l_A^\ast(a)_{\mu}f)_{\lambda+\mu}(l_A(b)_{\nu-\lambda-\mu} u)
=f_\lambda(l_A(a)_\mu(l_A(b)_{\nu-\lambda-\mu}u))\\
&=&f_\lambda(l_A(a_\mu b)_{\nu-\lambda}u) =(l_A^\ast(a_\mu
b)_{\nu-\lambda} f)_\nu u=(l_A^\ast(a_\mu b)_{-\lambda-\partial}
f)_\nu u;\\
(r_A^\ast (a)_\lambda (l_A^\ast(b)_{-\mu-\partial} f))_\nu
u&=&(l_A^\ast (b)_{-\lambda-\mu+\nu}
f)_{\nu-\lambda}(r_A(a)_\lambda u)
=f_\mu(l_A(b)_{\nu-\lambda-\mu}(r_A(a)_\lambda u))\\&=&
f_\mu(r_A(a)_\lambda (l_A(b)_{\nu-\lambda-\mu}u))
=(r_A^\ast(a)_\lambda
f)_{\lambda+\mu}(l_A(b)_{\nu-\lambda-\mu}u)\\&=&(l_A^\ast(b)_{-\lambda-\mu-\partial}(r_A^\ast(a)_\lambda
f))_\nu u,
\end{eqnarray*}
we have
\begin{eqnarray*}
&&r_A^\ast(a)_\lambda(r_A^\ast(b)_\mu f)=r_A^\ast(a_\lambda
b)_{\lambda+\mu}
f,\;\;l_A^\ast(b)_{-\lambda-\mu-\partial}(l_A^\ast(a)_{-\lambda-\partial}
f) =l_A^\ast(a_\mu b)_{-\lambda-\partial} f,\\
&&r_A^\ast (a)_\lambda (l_A^\ast(b)_{-\mu-\partial}
f)=l_A^\ast(b)_{-\lambda-\mu-\partial}(r_A^\ast(a)_\lambda f).
\end{eqnarray*}
Hence $(M^{\ast c},r_A^\ast, l_A^\ast)$ is a bimodule of $A$.
\end{proof}

\begin{ex}
Let $A$ be a finite associative conformal algebra. Define two
$\mathbb{C}[\partial]$-module homomorphisms $L_A$ and $R_A$ from
$A$ to $Cend(A)$ by $L_A(a)_\lambda b=a_\lambda b$ and
$R_A(a)_\lambda b=b_{-\lambda-\partial}a$ for all $a$, $b\in A$.
Then $(A,L_A,R_A)$ is a bimodule of $A$. Moreover, $(A^{\ast
c},R_A^\ast,L_A^\ast)$ is a bimodule of $A$.
\end{ex}

\begin{pro}\label{pro1} {\rm (\cite[Proposition 4.4]{H1})}
Let $A$ and $B$ be two associative conformal algebras. Suppose that there are $\mathbb{C}[\partial]$-module homomorphisms $l_A$, $r_A: A\rightarrow Cend(B)$ and $l_B$, $r_B:B\rightarrow Cend(A)$ such that $(B, l_A, r_A)$ is a bimodule of $A$ and $(A, l_B, r_B)$ is a bimodule of $B$ and they satisfy the following relations:
\begin{eqnarray}
\label{es1}&&l_A(a)_\lambda(x_\mu y)=(l_A(a)_\lambda x)_{\lambda+\mu}y
+l_A(r_B(x)_{-\lambda-\partial}a)_{\lambda+\mu}y,\\
\label{es2}&&r_B(x)_{-\lambda-\mu-\partial}(a_\lambda b)
=a_\lambda (r_B(x)_{-\mu-\partial}b)+r_B(l_A(b)_\mu(x))_{-\lambda-\partial}a,\\
\label{es3}&&l_B(x)_\lambda(a_\mu b)=(l_B(x)_\lambda a)_{\lambda+\mu} b
+l_B(r_A(a)_{-\lambda-\partial}x)_{\lambda+\mu}b,\\
\label{es4}&&r_A(l_B(y)_\mu a)_{-\lambda-\partial}x+x_\lambda (r_A(a)_{-\mu-\partial} y)=r_A(a)_{-\lambda-\mu-\partial}(x_\lambda y),\\
\label{es5}&&r_A(r_B(y)_{-\mu-\partial}(a))_{-\lambda-\partial}x +x_\lambda (l_A(a)_\mu y)
=l_A(l_B(x)_\lambda a)_{\lambda+\mu} y+ (r_A(a)_{-\lambda-\partial}x)_{\lambda+\mu} y,\\
\label{es6}&&a_\lambda (l_B(x)_\mu b)+r_B(r_A(b)_{-\mu-\partial}x)_{-\lambda-\partial}(a)
=(r_B(x)_{-\lambda-\partial}a)_{\lambda+\mu}b+l_B(l_A(a)_\lambda x)_{\lambda+\mu}b,
\end{eqnarray}
for all $a$, $b\in A$ and $x$, $y\in B$. Then there is an associative conformal algebra structure on the $\mathbb{C}[\partial]$-module $A\oplus B$ given by
\begin{eqnarray}\label{ass-matched pair}
(a+x)_\lambda (b+y)=(a_\lambda b+l_B(x)_\lambda
b+r_B(y)_{-\lambda-\partial} a) +(x_\lambda y+l_A(a)_\lambda
y+r_A(b)_{-\lambda-\partial} x),\end{eqnarray} for all $a,b\in A$
and $x,y\in B$. We denote this associative conformal algebra by
$A\bowtie B$. $(A, B,l_A,r_A,l_B,r_B)$ satisfying the above
relations is called a {\bf matched pair } of associative conformal
algebras. Moreover, any associative conformal algebra $E=A\oplus
B$ where the sum is the direct sum of
$\mathbb{C}[\partial]$-modules and $A$, $B$ are two associative
conformal subalgebras of $E$, is $A\bowtie B$ associated to some
matched pair of associative conformal algebras.
\end{pro}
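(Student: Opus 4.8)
The plan is to verify directly that the bilinear product defined on the $\mathbb{C}[\partial]$-module $A\oplus B$ makes it an associative conformal algebra, and then to reverse the construction. First I would dispose of conformal sesquilinearity: each of the six building blocks of the product (the internal products $a_\lambda b$ and $x_\lambda y$, and the four actions $l_A,r_A,l_B,r_B$) is itself conformal or a bimodule action, so each already satisfies the identities of Definition~\ref{def1}, and the twists $-\lambda-\partial$ attached to the right actions are exactly those forced by sesquilinearity. Reassembling the $A$- and $B$-components then yields $(\partial(a+x))_\lambda(b+y)=-\lambda\,(a+x)_\lambda(b+y)$ together with its companion identity. This step is bookkeeping rather than substance.

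The substance is associativity. By $\mathbb{C}$-bilinearity it suffices to check $(u_\lambda v)_{\lambda+\mu}w = u_\lambda(v_\mu w)$ when each of $u,v,w$ lies in $A$ or in $B$, giving eight cases. The two pure cases reduce immediately to the associativity of $A$ and of $B$, since the product formula shows that $A$ and $B$ are subalgebras. For each of the six mixed cases I would expand both sides via the product formula and then project onto the $A$-summand and the $B$-summand separately, obtaining twelve scalar identities in all. The projections landing in the summand carrying a genuine module action reproduce the left- and right-module axioms (LM2), (RM2) and the bimodule compatibility \eqref{eq1} for the two bimodules $(B,l_A,r_A)$ and $(A,l_B,r_B)$ — six identities — while the six remaining projections are precisely the mixed compatibility relations \eqref{es1}--\eqref{es6}. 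Hence associativity holds exactly when these six relations are imposed alongside the bimodule axioms already assumed, which proves the first assertion.

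For the converse, given an associative conformal algebra $E=A\oplus B$ (direct sum of $\mathbb{C}[\partial]$-modules) in which $A$ and $B$ are subalgebras, I would define the four actions by projecting mixed products. Writing $p_A,p_B$ for the $\mathbb{C}[\partial]$-module projections, for $a\in A$, $y\in B$ set $r_B(y)_{-\lambda-\partial}a:=p_A(a_\lambda y)$ and $l_A(a)_\lambda y:=p_B(a_\lambda y)$, and symmetrically for $x\in B$, $b\in A$ set $l_B(x)_\lambda b:=p_A(x_\lambda b)$ and $r_A(b)_{-\lambda-\partial}x:=p_B(x_\lambda b)$. That these are $\mathbb{C}[\partial]$-module homomorphisms into the relevant $Cend$ spaces again follows from sesquilinearity of the product on $E$, and because $A$, $B$ are subalgebras the pure products $a_\lambda b$ and $x_\lambda y$ stay inside $A$ and $B$; hence the product on $E$ coincides with the formula in the statement. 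Reading the $A$- and $B$-projections of associativity in $E$ — the same computation run in reverse — then yields both the bimodule axioms and the relations \eqref{es1}--\eqref{es6}, so $(A,B,l_A,r_A,l_B,r_B)$ is a matched pair and $E=A\bowtie B$.

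The main obstacle I anticipate is organizational rather than conceptual: keeping the spectral parameters $\lambda,\mu$ and the $-\lambda-\partial$ shifts consistent across the eight cases. The conformal setting departs from the classical matched-pair argument precisely through these shifts on the right actions, so the delicate point is confirming that, after substituting the product formula into $(u_\lambda v)_{\lambda+\mu}w$ and into $u_\lambda(v_\mu w)$, the argument of each action carries exactly the parameter needed to match one of \eqref{es1}--\eqref{es6} on the nose, rather than merely up to a translation of the variable. Once the bookkeeping is pinned down in one representative mixed case, the remaining cases follow the same template.
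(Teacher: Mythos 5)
Your proposal is correct: the case-by-case verification of associativity on $A\oplus B$ (two pure cases from associativity of $A$ and $B$; six mixed cases whose $A$- and $B$-projections yield exactly the six bimodule axioms together with \eqref{es1}--\eqref{es6}), plus the converse via the projections $p_A,p_B$, is precisely the standard argument, and your parameter bookkeeping for the $-\lambda-\partial$ shifts checks out (e.g.\ the $A$-projection of the $A,A,B$ case is \eqref{es2} and its $B$-projection is (LM2) for $(B,l_A,r_A)$). Note that the paper itself gives no proof here --- it quotes the result from \cite[Proposition 4.4]{H1} --- so your write-up supplies the expected argument rather than deviating from one in the text.
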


\begin{rmk}{
\rm If $l_B$, $r_B$ and the $\lambda$-product on $B$ are trivial
in Proposition \ref{pro1}, that is, $B$ is exactly a bimodule of
$A$, then $A\bowtie B$ is the {\bf semi-direct product} of $A$ and
its bimodule $B$, which is denoted by $A\ltimes_{l_A, r_A} B$.}
\end{rmk}

\section{Double constructions of Frobenius conformal algebras}
 We introduce the notion of double constructions of Frobenius conformal algebras. The relationship between double constructions of Frobenius conformal algebras and matched pairs of associative
 conformal algebras is investigated.

\begin{defi}{
\rm
Let $V$ be a $\mathbb{C}[\partial]$-module. A {\bf conformal bilinear form} on $V$ is a $\mathbb{C}$-bilinear map
$\langle \cdot, \cdot \rangle_\lambda: V\times V\rightarrow \mathbb{C}[\lambda]$ satisfying
\begin{eqnarray}
\langle \partial a, b\rangle_\lambda=-\lambda\langle a,
b\rangle_\lambda, ~~~\langle a, \partial
b\rangle_\lambda=\lambda\langle a, b\rangle_\lambda,\;\;\forall\
a,b\in V.
\end{eqnarray}
A conformal bilinear form is called {\bf symmetric} if $\langle
a,b\rangle_\lambda=\langle b,a\rangle_{-\lambda}$ for any $a$,
$b\in V$. }\end{defi}

Suppose that there is a conformal bilinear form on a
$\mathbb{C}[\partial]$-module $V$. Then we have a
$\mathbb{C}[\partial]$-module homomorphism $\varphi: V\longrightarrow
V^{\ast c},~~ v\mapsto \varphi_v$ defined by
$$(\varphi_v)_\lambda w=\langle v,w\rangle_\lambda,\quad \forall\ v,w\in V.$$
A conformal bilinear form is called {\bf non-degenerate} if $\varphi$
gives an isomorphism of $\mathbb{C}[\partial]$-modules between $V$
and $V^{\ast c}$.

\begin{defi}{
\rm An associative conformal algebra $A$ is called a {\bf
Frobenius conformal algebra} if there is a non-degenerate
conformal bilinear form on $A$ such that
\begin{eqnarray}
\langle a_\lambda b,c\rangle_\mu=\langle
a,b_{\mu-\partial}c\rangle_\lambda,\;\;\forall\ a,b,c\in A.
\end{eqnarray}
A Frobenius conformal algebra $A$ is called {\bf symmetric} if the
conformal bilinear form on $A$ is symmetric. }\end{defi}

\begin{ex}
Let $A=\mathbb{C}[\partial]a\oplus\mathbb{C}[\partial]b$. Suppose
that $A$ is an associative conformal algebra with the following
$\lambda$-product:
\begin{eqnarray*}
a_\lambda a=(\partial^2+\lambda\partial+\lambda^2)b,~~a_\lambda b=b_\lambda a=b_\lambda b=0.
\end{eqnarray*}
Then $A$ is a Frobenius conformal algebra with the following
conformal bilinear form:
\begin{eqnarray*}
\langle a, a\rangle_\lambda=\langle b, b\rangle_\lambda=0,~~~\langle a, b\rangle_\lambda=\langle b, a\rangle_\lambda=1.
\end{eqnarray*}
\end{ex}

\begin{ex}
Let $(A,\langle \cdot, \cdot\rangle)$ be a Frobenius algebra, that
is, $A$ is an associative algebra with a non-degenerate bilinear
form $\langle\cdot,\cdot\rangle$ satisfying
$$\langle ab, c\rangle=\langle a, bc\rangle,\;\;\forall\ a,b\in A.$$
Then $(\text{Cur}(A), \langle \cdot,\cdot \rangle_\lambda)$ is a Frobenius conformal algebra with $\langle \cdot, \cdot \rangle_\lambda$ defined by
\begin{eqnarray*}
\langle p(\partial)a, q(\partial)b\rangle_\lambda=p(-\lambda)q(\lambda)\langle a,b\rangle, ~~~\forall\ \text{$p(\partial)$, $q(\partial)\in \mathbb{C}[\partial]$, $a$, $b\in A$.}
\end{eqnarray*}
\end{ex}
\begin{defi}{\rm
If a Frobenius conformal algebra $A$ satisfies the following
conditions:
\begin{enumerate}
\item[(1)] $A=B\oplus B^{\ast c}$ where the sum is the direct sum
of $\mathbb{C}[\partial]$-modules; \item[(2)] $B$ and $B^{\ast c}$
are two associative conformal subalgebras of $A$;\item[(3)]  the
conformal bilinear form on $A$ is naturally given by
\begin{eqnarray}\label{dde1}
\langle a+f, b+g\rangle_\lambda =f_\lambda(b)+g_{-\lambda}(a),~~~~\forall\  a, b\in B, f, g\in B^{\ast c},
\end{eqnarray}
\end{enumerate}
then $A$ is called {\bf a double construction of Frobenius conformal algebra} associated to $B$ and $B^{\ast c}$.
}\end{defi}

\begin{thm}\label{thm1}
Let $A$ be a finite associative conformal algebra which is free as a $\mathbb{C}[\partial]$-module. Suppose that there is an associative conformal algebra structure on $A^{\ast c}$. Then there is a double construction of Frobenius conformal algebra associated to $A$ and $A^{\ast c}$ if and only if $(A, A^{\ast c}, R_A^\ast, L_A^\ast, R_{A^{\ast c}}^\ast, L_{A^{\ast c}}^\ast)$ is a matched pair of associative conformal algebras.
\end{thm}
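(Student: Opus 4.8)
The plan is to reduce the equivalence to a single computation: that invariance of the bilinear form \eqref{dde1} is exactly equivalent to the four bimodule actions appearing in a matched pair on $A\oplus A^{\ast c}$ being the coregular (dual) ones $R_A^\ast, L_A^\ast, R_{A^{\ast c}}^\ast, L_{A^{\ast c}}^\ast$. The finiteness and freeness hypothesis on $A$ is used throughout: it guarantees that $Cend(-)$ carries an associative conformal structure, that Proposition \ref{pr1} applies so the dual actions are defined, and that the canonical evaluation pairing between $A$ and $A^{\ast c}$ is perfect. The last point makes the form \eqref{dde1} automatically non-degenerate, independently of any algebra structure (and a direct check shows it is moreover symmetric, $\langle b+g, a+f\rangle_\lambda = \langle a+f, b+g\rangle_{-\lambda}$). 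Hence the matched-pair axioms \eqref{es1}--\eqref{es6} are never the thing being verified by hand: they are either hypotheses or come for free from Proposition \ref{pro1}. The only substantive point is the invariance identity.

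For the direction ($\Leftarrow$), assuming the given tuple is a matched pair, Proposition \ref{pro1} produces the associative conformal algebra $A\bowtie A^{\ast c}$ on $A\oplus A^{\ast c}$ with $A$ and $A^{\ast c}$ as subalgebras and with the cross products read off from \eqref{ass-matched pair}. I would then verify $\langle u_\lambda v, w\rangle_\mu = \langle u, v_{\mu-\partial}w\rangle_\lambda$ by bilinearity on homogeneous triples $u,v,w$ in which each of $u,v,w$ lies in $A$ or in $A^{\ast c}$. The triples lying entirely in $A$ or entirely in $A^{\ast c}$ are trivial, since the form vanishes on $A\times A$ and on $A^{\ast c}\times A^{\ast c}$; the mixed cases reduce, after substituting the definitions such as $(R_A^\ast(a)_\lambda f)_\mu u = f_{\mu-\lambda}(u_{-\lambda-\partial}a)$ together with the analogues for $L_A^\ast, R_{A^{\ast c}}^\ast, L_{A^{\ast c}}^\ast$, to identities that hold on the nose. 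Resolving the $\mu-\partial$ appearing inside the pairing requires the sesquilinearity rule $\langle a, \partial w\rangle_\lambda = \lambda\langle a, w\rangle_\lambda$, which turns $v_{\mu-\partial}w$ inside $\langle a,-\rangle_\lambda$ into the evaluation at $\mu-\lambda$.

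For the direction ($\Rightarrow$), a double construction of Frobenius conformal algebra supplies an associative conformal algebra on $A\oplus A^{\ast c}$ in which $A$ and $A^{\ast c}$ are subalgebras, so by the converse part of Proposition \ref{pro1} it equals $A\bowtie A^{\ast c}$ for some matched pair $(A, A^{\ast c}, l_A, r_A, l_B, r_B)$. I would then run the previous computation in reverse: expanding invariance on the mixed homogeneous triples pins down the $A^{\ast c}$- and $A$-components of the cross products $a_\lambda f$ and $f_\lambda a$, and comparing with the definitions of the dual actions forces $l_A = R_A^\ast$, $r_A = L_A^\ast$, $l_B = R_{A^{\ast c}}^\ast$, $r_B = L_{A^{\ast c}}^\ast$. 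For instance, taking $u=a,\,v=b\in A$ and $w=h\in A^{\ast c}$ yields a relation of the schematic form $h_{-\mu}(a_\lambda b) = (l_A(b)_{\mu-\partial}h)_{-\lambda}(a)$, which matches precisely the defining relation of $R_A^\ast(b)$ and thereby identifies $l_A$; the remaining three actions are identified from the symmetric and dual cases.

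I expect the main obstacle to be purely the conformal bookkeeping: correctly propagating the shifts $\mu-\partial$, $-\lambda-\partial$ and $-\lambda-\mu-\partial$ through the nested $\lambda$-pairings and matching them term by term with the definitions of $R_A^\ast, L_A^\ast, R_{A^{\ast c}}^\ast$ and $L_{A^{\ast c}}^\ast$. No conceptual difficulty beyond this should arise, since non-degeneracy and the algebra/matched-pair correspondence are either automatic from finiteness or furnished by Proposition \ref{pro1}.
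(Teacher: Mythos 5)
Your proposal is correct and follows essentially the same route as the paper: the backward direction builds $A\bowtie A^{\ast c}$ via Proposition \ref{pro1} and verifies invariance of the form (\ref{dde1}) by direct expansion (the paper does this in one block rather than by homogeneous triples, but it is the same computation), and the forward direction invokes the converse of Proposition \ref{pro1} and uses invariance plus non-degeneracy to force $l_A=R_A^\ast$, $r_A=L_A^\ast$, $l_{A^{\ast c}}=R_{A^{\ast c}}^\ast$, $r_{A^{\ast c}}=L_{A^{\ast c}}^\ast$, exactly as in the paper. Your observations that symmetry and non-degeneracy of the form are automatic, and that the matched-pair axioms themselves never need to be checked by hand, are also how the paper treats those points.
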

\begin{proof}
Suppose that $(A, A^{\ast c}, R_A^\ast, L_A^\ast, R_{A^{\ast c}}^\ast, L_{A^{\ast c}}^\ast)$ is a matched pair of associative conformal algebras. Then $A\bowtie A^{\ast c}$ is endowed with an associative conformal algebra structure as follows.
\begin{eqnarray}
(a+f)_\lambda (b+g)=(a_\lambda b+R_{A^{\ast c}}^\ast(f)_\lambda
b+L_{A^{\ast c}}^\ast(g)_{-\lambda-\partial} a) +(f_\lambda
g+R_A^\ast(a)_\lambda g+L_A^\ast (b)_{-\lambda-\partial}f),
\end{eqnarray}
for all $a,b\in A$ and $f,g\in A^{\ast c}$. By this
$\lambda$-product, $A$ and $A^{\ast c}$ are two subalgebras of
$A\bowtie A^{\ast c}$.

Obviously, the conformal bilinear form on $A\bowtie A^{\ast c}$
given by (\ref{dde1}) is symmetric and non-degenerate.  For all
$a$, $b$, $c\in A$, and $f$, $g$, $h\in A^{\ast c}$, we have
\begin{eqnarray*}
&&\langle (a+f)_\lambda (b+g), c+h\rangle_\mu\\
&=&\langle a_\lambda b+R_{A^{\ast c}}^\ast (f)_\lambda b+ L_{A^{\ast c}}^\ast(g)_{-\lambda-\partial}a
+f_\lambda g+R_A^\ast(a)_\lambda g+L_A^\ast(b)_{-\lambda-\partial}f, c+h\rangle_\mu\\
&=&(f_\lambda g+R_A^\ast(a)_\lambda g+L_A^\ast(b)_{-\lambda-\partial}f)_\mu (c)
+h_{-\mu}(a_\lambda b+R_{A^{\ast c}}^\ast (f)_\lambda b+ L_{A^{\ast c}}^\ast(g)_{-\lambda-\partial}a)\\
&=&(f_\lambda g)_\mu c+g_{\mu-\lambda}(R_A(a)_\lambda c)
+f_\lambda(L_A(b)_{\mu-\lambda}c)+h_{-\mu}(a_\lambda b)\\
&&+(R_{A^{\ast c}}(f)_\lambda h)_{\lambda-\mu} b
+(L_{A^{\ast c}}(g)_{\mu-\lambda}h)_{-\lambda}a\\
&=&(f_\lambda g)_\mu c+g_{\mu-\lambda}(c_{-\lambda-\partial}a)
+f_\lambda(b_{\mu-\lambda} c)+h_{-\mu}(a_\lambda b)
+(h_{-\lambda-\partial}f)_{\lambda-\mu} b+(g_{\mu-\lambda} h)_{-\lambda}a\\
&=&(f_\lambda g)_\mu c+g_{\mu-\lambda}(c_{-\mu}a)
+f_\lambda(b_{\mu-\lambda} c)+h_{-\mu}(a_\lambda b)
+(h_{-\mu}f)_{\lambda-\mu} b+(g_{\mu-\lambda} h)_{-\lambda}a,
\end{eqnarray*}
and
\begin{eqnarray*}
&&\langle a+f, (b+g)_{\mu-\partial} (c+h)\rangle_\lambda\\
&=&\langle a+f, b_{\mu-\partial} c+R_{A^{\ast c}}^\ast(g)_{\mu-\partial}c
+L_{A^{\ast c}}^\ast(h)_{-\mu} b+g_{\mu-\partial}h
+R_A^\ast(b)_{\mu-\partial}h+L_A^\ast(c)_{-\mu}g \rangle_\lambda\\
&=& f_\lambda(b_{\mu-\partial} c)+f_\lambda(R_{A^{\ast c}}^\ast(g)_{\mu-\partial}c)
+f_\lambda(L_{A^{\ast c}}^\ast(h)_{-\mu} b)\\
&&+(g_{\mu-\partial}h)_{-\lambda}a+(R_A^\ast(b)_{\mu-\partial}h)_{-\lambda}a
+(L_A^\ast(c)_{-\mu}g)_{-\lambda}a\\
&=& f_\lambda(b_{\mu-\lambda} c)+(R_{A^{\ast c}}(g)_{\mu-\lambda} f)_\mu c
+(L_{A^{\ast c}}(h)_{-\mu}f)_{\lambda-\mu} b\\
&&+(g_{\mu-\lambda}h)_{-\lambda}a+h_{-\mu}(R_A(b)_{\mu-\lambda}a)
+g_{\mu-\lambda}(L_A(c)_{-\mu}a)\\
&&=f_\lambda(b_{\mu-\lambda} c)+(f_\lambda g)_\mu c
+(h_{-\mu}f)_{\lambda-\mu} b+(g_{\mu-\lambda}h)_{-\lambda}a+h_{-\mu}(a_\lambda b)
+g_{\mu-\lambda}(c_{-\mu}a).
\end{eqnarray*}
Hence this conformal bilinear form on $A\bowtie A^{\ast c}$ is
invariant. Therefore $A\bowtie A^{\ast c}$ is a double
construction of Frobenius conformal algebra associated to $A$ and
$A^{\ast c}$.

Conversely, suppose that there is a double construction of
Frobenius conformal algebra associated to $A$ and $A^{\ast c}$.
Therefore there is an associative conformal algebra structure on
$A\oplus A^{\ast c}$ associated to a matched pair $(A, A^{\ast
c},l_A,r_A,l_{A^{\ast c}},r_{A^{\ast c}})$. Note that in $A\oplus
A^{\ast c}$,
\begin{eqnarray*}
a_\lambda f=r_{A^{\ast c}}(f)_{-\lambda-\partial}a+l_A(a)_\lambda
f,\;\; f_\lambda a=l_{A^{\ast c}}(f)_\lambda
a+r_A(a)_{-\lambda-\partial} f,\;\;\forall\ a\in A, f\in A^{\ast
c}.
\end{eqnarray*}
For all $a$, $b\in A$, and $f\in A^{\ast c}$, we have
\begin{eqnarray*}
\langle {l_A(a)}_\lambda f, b\rangle_\mu&=&\langle a_\lambda f,
b\rangle_\mu=\langle a, f_{\mu-\partial}b\rangle_\lambda
=\langle f_{\mu-\lambda} b,a\rangle_{-\lambda}=\langle f, b_{-\mu}a\rangle_{\mu-\lambda}\\
&=& f_{\mu-\lambda}( {R_A(a)}_{\mu-\partial}
b)=(R_A^\ast(a)_\lambda f)_\mu b =\langle R_A^{\ast}(a)_\lambda f,
b\rangle_\mu.
\end{eqnarray*}
Since $\langle \cdot, \cdot\rangle_\mu$ is non-degenerate,
$l_A(a)_\lambda f=R_A^{\ast}(a)_\lambda f$ for all $a\in A$ and
$f\in A^{\ast c}$ and hence $l_A=R_A^{\ast}$. Similarly, we have
$$r_A=L_A^\ast, \;\;
l_{A^{\ast c}}=R_{A^{\ast c}}^\ast,\;\; r_{A^{\ast c}}=L_{A^{\ast
c}}^\ast.$$ Thus $(A, A^{\ast c}, R_A^\ast, L_A^\ast, R_{A^{\ast
c}}^\ast, L_{A^{\ast c}}^\ast)$ is a matched pair of associative
conformal algebras.
\end{proof}

\begin{thm}\label{t1}
Let $A$ be a finite associative conformal algebra which is free as
a $\mathbb{C}[\partial]$-module. Assume that there is an
associative conformal algebra structure on the
$\mathbb{C}[\partial]$-module $A^{\ast c}$. Then $(A, A^{\ast c},
R_A^\ast, L_A^\ast, R_{A^{\ast c}}^\ast, L_{A^{\ast c}}^\ast)$ is
a matched pair of associative conformal algebras if and only if
\begin{eqnarray}
\label{es7}R_A^\ast(a)_\lambda (f_\mu g)=(R_A^\ast(a)_\lambda f)_{\lambda+\mu}g
+R_A^\ast(L_{A^{\ast c}}^\ast(f)_{-\lambda-\partial} a)_{\lambda+\mu} g,\\
\label{es8}R_{A}^\ast(R_{A^{\ast c}}^\ast(f)_\lambda
a)_{\lambda+\mu} g +(L_A^\ast(a)_{-\lambda-\partial}
f)_{\lambda+\mu} g=L_{A}^\ast (L_{A^{\ast
c}}^\ast(g)_{-\mu-\partial}(a))_{-\lambda-\partial} f+f_\lambda
(R_A^\ast(a)_\mu g),
\end{eqnarray}
for all $a\in A$ and $f$, $g\in A^{\ast c}$.
\end{thm}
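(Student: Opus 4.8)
The plan is to reduce the full list of matched-pair axioms of Proposition~\ref{pro1}, specialized to the data $(A,A^{\ast c},R_A^\ast,L_A^\ast,R_{A^{\ast c}}^\ast,L_{A^{\ast c}}^\ast)$, to the two identities \eqref{es7} and \eqref{es8}. First I would dispose of the underlying bimodule requirements: $(A^{\ast c},R_A^\ast,L_A^\ast)$ is a bimodule of $A$ by Proposition~\ref{pr1} applied to the regular bimodule $(A,L_A,R_A)$, and symmetrically $(A,R_{A^{\ast c}}^\ast,L_{A^{\ast c}}^\ast)$ is a bimodule of $A^{\ast c}$ by Proposition~\ref{pr1} applied to the regular bimodule $(A^{\ast c},L_{A^{\ast c}},R_{A^{\ast c}})$, where I use the canonical identification $(A^{\ast c})^{\ast c}\cong A$, valid since $A$ is finite and free as a $\mathbb{C}[\partial]$-module. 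Thus the only content of the matched-pair condition is the six compatibility identities \eqref{es1}--\eqref{es6}.

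Second, I would simply substitute $l_A=R_A^\ast$, $r_A=L_A^\ast$, $l_B=R_{A^{\ast c}}^\ast$, $r_B=L_{A^{\ast c}}^\ast$ into \eqref{es1}--\eqref{es6}. Under this substitution \eqref{es1} is exactly \eqref{es7}, and \eqref{es5} is exactly \eqref{es8} (up to transposing the two sides). This settles the forward direction immediately: a matched pair satisfies all six identities, hence in particular \eqref{es7} and \eqref{es8}.

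The substance is the converse, namely deriving the remaining four identities \eqref{es2}, \eqref{es3}, \eqref{es4}, \eqref{es6} from \eqref{es7} and \eqref{es8}. Here I would use the non-degenerate conformal pairing between $A$ and $A^{\ast c}$. Three of these identities are valued in $A$ and one in $A^{\ast c}$; I would test each against an arbitrary element of the opposite space and unwind the defining relations $(R_A^\ast(a)_\lambda f)_\mu u=f_{\mu-\lambda}(R_A(a)_\lambda u)$ and $(L_A^\ast(a)_\lambda f)_\mu u=f_{\mu-\lambda}(L_A(a)_\lambda u)$, together with their counterparts for $R_{A^{\ast c}}^\ast,L_{A^{\ast c}}^\ast$, and the formulas $R_A(a)_\lambda u=u_{-\lambda-\partial}a$, $L_A(a)_\lambda u=a_\lambda u$. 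After this unwinding each tested identity becomes a scalar relation that, following a suitable renaming of the spectral parameters, coincides with \eqref{es7} or \eqref{es8} tested against an element; non-degeneracy of the pairing then yields the equivalence in both directions, so that \eqref{es7} and \eqref{es8} force the remaining four.

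I expect the main obstacle to be the spectral-parameter and $\partial$-shift bookkeeping inside this dualization: one must correctly propagate the shift $-\lambda-\partial$ built into the right-regular and dual conventions, and the sign $\lambda\mapsto-\lambda$ produced by the isomorphism $(A^{\ast c})^{\ast c}\cong A$, and only then does the transformed relation line up on the nose with \eqref{es7} or \eqref{es8}. Once the correct dictionary between the variables $\lambda,\mu,\nu$ is fixed, each of the four verifications is a finite and routine computation.
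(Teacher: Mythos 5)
Your proposal follows essentially the same route as the paper: substitute $l_A=R_A^\ast$, $r_A=L_A^\ast$, $l_B=R_{A^{\ast c}}^\ast$, $r_B=L_{A^{\ast c}}^\ast$ into \eqref{es1}--\eqref{es6}, observe that \eqref{es1} and \eqref{es5} become \eqref{es7} and \eqref{es8}, and then show the remaining four identities are equivalent to these two by dualizing and renaming spectral parameters. The paper carries out that last step by expanding against a $\mathbb{C}[\partial]$-basis and matching the structure polynomials $P_k^{ij}$, $R_k^{ij}$ (doing \eqref{es8}$\Leftrightarrow$\eqref{es6} explicitly), which is exactly your pairing-with-arbitrary-elements computation in coordinates, so the two arguments coincide.
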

\begin{proof}
Obviously, (\ref{es7}) is exactly (\ref{es1}) and (\ref{es8}) is
exactly (\ref{es5}) when $$l_A=R_A^\ast, \;\;r_A=L_A^\ast,\;\;
l_B=R_{A^{\ast c}}^\ast,\;\;r_B=L_{A^{\ast c}}^\ast,$$ in
Proposition \ref{pro1}. Then the conclusion follows if we  prove
that (\ref{es7}), (\ref{es2}), (\ref{es3}) and (\ref{es4}) are
mutually equivalent, (\ref{es8}) and (\ref{es6}) are equivalent.
As an example, we give an explicit proof that (\ref{es8}) holds if
and only if (\ref{es6}) holds. The other cases can be proved
similarly.

Let $\{e_1,\cdots, e_n\}$ be a $\mathbb{C}[\partial]$-basis of $A$
and $\{e_1^\ast,\cdots,e_n^\ast\}$ be a dual
$\mathbb{C}[\partial]$-basis of $A^{\ast c}$ in the sense that
$(e_j^\ast)_\lambda e_i=\delta_{ij}$. Set
$${e_i}_\lambda
e_j=\sum_{k=1}^n P_k^{ij}(\lambda,\partial)e_k,\;\;{e_i^\ast}
_\lambda
e_j^\ast=\sum_{k=1}^nR_k^{ij}(\lambda,\partial)e_k^\ast,$$ where
$P_k^{ij}(\lambda,\partial)$ and $R_k^{ij}(\lambda,\partial)\in
\mathbb{C}[\lambda,\partial]$. Since
$$(L_A^\ast(e_i)_\lambda e_j^\ast)_\mu e_k={e_j^\ast}_{\mu-\lambda}({e_i}_\lambda e_k)={e_j^\ast}_{\mu-\lambda}(\sum_{j=1}^nP_j^{ik}(\lambda,\partial)e_j)=P_j^{ik}(\lambda,\mu-\lambda),$$
we have
\begin{eqnarray*}\label{w1}L_A^\ast(e_i)_\lambda e_j^\ast=\sum_{k=1}^nP_j^{ik}(\lambda,-\lambda-\partial)e_k^\ast.\end{eqnarray*}
Similarly, we have
\begin{eqnarray*}\label{w2}
&&L_{A^{\ast c}}^\ast(e_i^\ast)_\lambda e_j=\sum_{k=1}^n
R_j^{ik}(\lambda,-\lambda-\partial)e_k,\;\; R_A^\ast(e_i)_\lambda
e_j^\ast=\sum_{k=1}^nP_j^{ki}(\partial,-\lambda-\partial)e_k^\ast,\\
&&R_{A^{\ast c}}^\ast(e_i^\ast)_\lambda
e_j=\sum_{k=1}^nR_j^{ki}(\partial,-\lambda-\partial)e_k.\end{eqnarray*}
Therefore (\ref{es8}) holds for any $a\in A$, $f$, $g\in A^{\ast c}$ if and only if
\begin{eqnarray*}
&&(R_{A}^\ast(R_{A^{\ast c}}^\ast(e_j^\ast)_\lambda
e_i)_{\lambda+\mu} e_k^\ast
+(L_A^\ast(e_i)_{-\lambda-\partial} e_i^\ast)_{\lambda+\mu} e_k^\ast\\
&&-L_{A}^\ast (L_{A^{\ast
c}}^\ast(e_k^\ast)_{-\mu-\partial}e_i)_{-\lambda-\partial}
e_j^\ast+{e_j^\ast}_\lambda (R_A^\ast(e_i)_\mu e_k^\ast))_\nu
e_s=0,~~~~\forall~~i,j,k,s,
\end{eqnarray*}
if and only if the following equation holds:
\begin{eqnarray}
&&\sum_{t=1}^n(R_i^{tj}(-\lambda-\mu,\mu)P_k^{st}(-\nu,\nu-\lambda-\mu)+R_s^{tk}(\lambda+\mu,-\nu)P_j^{it}(\mu,\lambda)\nonumber\\
&&\label{w5}-R_i^{kt}(\nu-\lambda-\mu,\mu)P_j^{ts}(\nu-\lambda,\lambda)-R_s^{jt}(\lambda,-\nu)P_k^{ti}(\lambda-\nu,\nu-\lambda-\mu))=0,~~~~\forall~~i,j,k,s.
\end{eqnarray}
On the other hand, (\ref{es6}) holds for any $a$, $b\in A$, $x\in A^{\ast c}$ if and only if
\begin{eqnarray*}
{e_j^\ast}_\nu({e_i}_\lambda (R_{A^{\ast c}}^\ast(e_k^\ast)_\mu e_s)+L_{A^{\ast c}}^\ast(L_A^\ast(e_s)_{-\mu-\partial} e_k^\ast)_{-\lambda-\partial} e_i\\
-(L_{A^{\ast c}}^\ast(e_k^\ast)_{-\lambda-\partial} e_i)_{\lambda+\mu} e_s-R_{A^{\ast c}}^\ast(R_A^\ast(e_i)_\lambda e_k^\ast)_{\lambda+\mu} e_s)=0,~~~~\forall~~i,j,k,s,
\end{eqnarray*}
if and only if the following equation holds:
\begin{eqnarray}
&&\sum_{t=1}^n(R_i^{tj}(-\lambda-\nu,\lambda)P_k^{st}(-\lambda-\mu-\nu,\mu)+R_s^{tk}(\lambda+\nu,-\lambda-\mu-\nu)P_j^{it}(\lambda,\nu)\nonumber\\
&&\label{w6}-R_i^{kt}(\mu,\lambda)P_j^{ts}(\lambda+\mu,\nu)-R_s^{jt}(\nu,-\lambda-\mu-\nu)P_k^{ti}(-\lambda-\mu,\mu))=0,~~~~\forall~~i,j,k,s.
\end{eqnarray}
Note that (\ref{w6}) is exactly (\ref{w5}) when we replace
$\lambda$ by $\mu$, $\nu$ by $\lambda$, and $\mu$ by
$\nu-\lambda-\mu$ in (\ref{w6}). Therefore (\ref{es8}) holds if
and only if (\ref{es6}) holds.
\end{proof}

\section{Antisymmetric infinitesimal conformal bialgebras}
We introduce the notion of antisymmetric infinitesimal  conformal
bialgebras as (under certain conditions) the equivalent structures
of the matched pairs of associative conformal algebras as well as
double constructions of Frobenius conformal algebras given in the
previous section.

\begin{defi}{\rm
An {\bf associative conformal coalgebra} is a $\mathbb{C}[\partial]$-module $A$ endowed with a $\mathbb{C}[\partial]$-module homomorphism $\Delta: A\longrightarrow A\otimes A$ such that
\begin{equation}(I\otimes\Delta)\Delta(a)=(\Delta\otimes I)\Delta(a),\;\;\forall\ a\in A,\label{eq:coass}\end{equation}
where the module action of $\mathbb{C}[\partial]$ on $A\otimes A$ is defined as
$\partial (a\otimes b)=(\partial a)\otimes b+a\otimes (\partial b)$ for any $a$, $b\in A$.}\end{defi}

\begin{pro}\label{Dpro1}
Let $(A,\Delta)$ be a finite associative conformal coalgebra. Then
$A^{\ast c}=Chom(A,\mathbb{C})$ is endowed with an associative
conformal algebra structure with the following $\lambda$-product:
\begin{eqnarray}\label{eq3}(f_\lambda g)_\mu (a)=\sum f_\lambda(a_{(1)})g_{\mu-\lambda}(a_{(2)})=(f\otimes g)_{\lambda,\mu-\lambda}(\Delta(a)),\;\;\forall\  f, g\in A^{\ast c},\end{eqnarray}
where $\Delta(a)=\sum a_{(1)}\otimes a_{(2)}$ for all $a\in A$.
\end{pro}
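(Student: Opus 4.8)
The plan is to verify the three requirements in turn: that the formula \eqref{eq3} produces a genuine element of $A^{\ast c}[\lambda]$, that it satisfies conformal sesquilinearity, and that it is associative, with associativity coming directly from the coassociativity \eqref{eq:coass} of $\Delta$. First I would check well-definedness. Writing $\Delta(a)=\sum a_{(1)}\otimes a_{(2)}$, the quantity $(f_\lambda g)_\mu(a)=\sum f_\lambda(a_{(1)})g_{\mu-\lambda}(a_{(2)})$ lies in $\mathbb{C}[\lambda,\mu]$, so for each fixed $\lambda$ it is a $\mathbb{C}$-linear map $A\to\mathbb{C}[\mu]$ depending polynomially on $\lambda$. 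To see that it belongs to $A^{\ast c}$, I use that $\Delta$ is a $\mathbb{C}[\partial]$-module homomorphism, so $\Delta(\partial a)=\partial\Delta(a)=\sum(\partial a_{(1)})\otimes a_{(2)}+a_{(1)}\otimes(\partial a_{(2)})$. Applying the defining properties $f_\lambda(\partial b)=\lambda f_\lambda(b)$ and $g_{\mu-\lambda}(\partial b)=(\mu-\lambda)g_{\mu-\lambda}(b)$ of the conformal dual to the two resulting terms gives $(f_\lambda g)_\mu(\partial a)=(\lambda+(\mu-\lambda))(f_\lambda g)_\mu(a)=\mu(f_\lambda g)_\mu(a)$, which is exactly the condition for $(f_\lambda g)_\mu\in A^{\ast c}$. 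Hence $f_\lambda g\in A^{\ast c}[\lambda]$.

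Next I would verify conformal sesquilinearity using the canonical $\mathbb{C}[\partial]$-action on $Chom(A,\mathbb{C})$, namely $(\partial f)_\lambda=-\lambda f_\lambda$. A direct computation gives $((\partial f)_\lambda g)_\mu(a)=\sum(-\lambda)f_\lambda(a_{(1)})g_{\mu-\lambda}(a_{(2)})=-\lambda(f_\lambda g)_\mu(a)$, so $(\partial f)_\lambda g=-\lambda f_\lambda g$. Similarly $(f_\lambda(\partial g))_\mu(a)=\sum f_\lambda(a_{(1)})\,(-(\mu-\lambda))\,g_{\mu-\lambda}(a_{(2)})=(\lambda-\mu)(f_\lambda g)_\mu(a)$; since the $\partial$-action on $A^{\ast c}$ sends $(f_\lambda g)_\mu\mapsto-\mu(f_\lambda g)_\mu$, the right-hand side equals $((\partial+\lambda)(f_\lambda g))_\mu(a)$, so $f_\lambda(\partial g)=(\partial+\lambda)(f_\lambda g)$. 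This establishes both identities.

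Finally, associativity. I would expand both sides of $(f_\lambda g)_{\lambda+\mu}h=f_\lambda(g_\mu h)$ against an arbitrary $a\in A$ at parameter $\nu$. Applying \eqref{eq3} twice to the left side yields $((f_\lambda g)_{\lambda+\mu}h)_\nu(a)=\sum f_\lambda(a_{(1)(1)})g_\mu(a_{(1)(2)})h_{\nu-\lambda-\mu}(a_{(2)})$, which is $(f_\lambda\otimes g_\mu\otimes h_{\nu-\lambda-\mu})$ applied to $(\Delta\otimes I)\Delta(a)$; expanding the right side gives $(f_\lambda(g_\mu h))_\nu(a)=\sum f_\lambda(a_{(1)})g_\mu(a_{(2)(1)})h_{\nu-\lambda-\mu}(a_{(2)(2)})$, the same functional applied to $(I\otimes\Delta)\Delta(a)$. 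The coassociativity identity \eqref{eq:coass} states precisely that these two elements of $A\otimes A\otimes A$ coincide, so the two sides agree for all $a,\lambda,\mu,\nu$. The main point requiring care is the bookkeeping of the parameter shifts in the iterated $\lambda$-products: one must check that the inner evaluations $g_{(\lambda+\mu)-\lambda}=g_\mu$ on the left and $h_{(\nu-\lambda)-\mu}=h_{\nu-\lambda-\mu}$ on the right line up, so that both sides are evaluated by the identical multilinear functional $f_\lambda\otimes g_\mu\otimes h_{\nu-\lambda-\mu}$; once the indices match, coassociativity closes the argument at once. Finiteness of $A$ is assumed so that $A^{\ast c}$ is the honest conformal dual, though the verification above is purely formal.
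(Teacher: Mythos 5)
Your proof is correct and follows essentially the same route as the paper: the associativity of the $\lambda$-product on $A^{\ast c}$ is reduced, after the same parameter bookkeeping ($g_{(\lambda+\mu)-\lambda}=g_\mu$ and $h_{(\nu-\lambda)-\mu}=h_{\nu-\lambda-\mu}$), to applying $(f\otimes g\otimes h)_{\lambda,\mu,\nu-\lambda-\mu}$ to $((\Delta\otimes I)\Delta-(I\otimes\Delta)\Delta)(a)=0$. The only difference is that you spell out the well-definedness of $f_\lambda g$ as an element of $A^{\ast c}[\lambda]$ and the two sesquilinearity identities, which the paper dismisses as ``naturally satisfied''; your verifications of these are accurate.
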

\begin{proof}
By (\ref{eq3}), the conformal sesquilinearity of the
$\lambda$-product is naturally satisfied.  For all $a\in A$, $f$,
$g$ and $h\in A^{\ast c}$, we have
\begin{eqnarray*}
&&((f_\lambda g)_{\lambda+\mu}h-f_\lambda(g_\mu h))_\nu(a)\\
&&=\sum(f_\lambda g)_{\lambda+\mu}(a_{(1)})h_{\nu-\lambda-\mu}(a_{(2)})-\sum f_\lambda(a_{(1)})(g_\mu h)_{\nu-\lambda}(a_{(2)})\\
&&=\sum f_\lambda(a_{(11)})g_\mu(a_{(12)})h_{\nu-\lambda-\mu}(a_{(2)})-\sum f_\lambda(a_{(1)})g_\mu(a_{(21)})h_{\nu-\lambda-\mu}(a_{(22)})\\
&&=(f\otimes g\otimes h)_{\lambda, \mu,
\nu-\lambda-\mu}(((\Delta\otimes I)\Delta-(I\otimes
\Delta)\Delta)(a))=0.\end{eqnarray*} Hence the conclusion holds.
\end{proof}

\begin{pro}\label{pp1}
Let $A$ be a finite associative conformal algebra which is free as
a $\mathbb{C}[\partial]$-module, that is,
$A=\sum_{i=1}^n\mathbb{C}[\partial]e_i$, where
$\{e_1,\cdots,e_n\}$ is a $\mathbb{C}[\partial]$-basis of $A$.
Then $A^{\ast
c}=Chom(A,\mathbb{C})=\sum_{i=1}^n\mathbb{C}[\partial]e_i^\ast$ is
an associative conformal coalgebra with the following coproduct:
\begin{eqnarray}
\label{eq4}\Delta(f)=\sum_{i,j}f_\mu({e_i}_\lambda e_j)(e_i^\ast\otimes e_j^\ast)|_{\lambda=\partial\otimes 1,\mu=-\partial\otimes 1-1\otimes \partial},
\end{eqnarray}  where $\{e_1^\ast,\cdots, e_n^\ast\}$ is a dual $\mathbb{C}[\partial]$-basis of $A^{\ast c}$.  More precisely, if ${e_i}_\lambda e_j=\sum_kP_k^{ij}(\lambda,\partial)e^k$, then
$$\Delta(e_k^\ast)=\sum_{i,j}Q_k^{ij}(\partial\otimes 1, 1\otimes\partial)(e_i^\ast\otimes e_j^\ast),$$
where $Q_k^{ij}(x,y)=P_k^{ij}(x,-x-y)$.
\end{pro}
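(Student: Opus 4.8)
The plan is to check the two defining properties of an associative conformal coalgebra for the map $\Delta$ of \eqref{eq4}: that $\Delta$ is a $\mathbb{C}[\partial]$-module homomorphism, and that it satisfies the coassociativity identity \eqref{eq:coass}. I would first record the explicit form of $\Delta$ on the dual basis, i.e. the ``more precisely'' assertion. Writing ${e_i}_\lambda e_j=\sum_k P_k^{ij}(\lambda,\partial)e_k$ and using that an element of $A^{\ast c}$ pairs with a monomial via $(e_k^\ast)_\mu(\partial^s e_l)=\mu^s\delta_{kl}$, the coefficient $(e_k^\ast)_\mu({e_i}_\lambda e_j)$ equals $P_k^{ij}(\lambda,\mu)$; substituting $\lambda=\partial\otimes 1$ and $\mu=-\partial\otimes 1-1\otimes\partial$ then yields exactly $\Delta(e_k^\ast)=\sum_{i,j}Q_k^{ij}(\partial\otimes 1,1\otimes\partial)(e_i^\ast\otimes e_j^\ast)$ with $Q_k^{ij}(x,y)=P_k^{ij}(x,-x-y)$.

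The module-homomorphism property is short. Applying $\Delta$ to $\partial f$ and using $(\partial f)_\mu=-\mu f_\mu$ on $Chom(A,\mathbb{C})$, the defining formula acquires an extra scalar factor $-\mu$; under the substitution $\mu=-\partial\otimes 1-1\otimes\partial$ this factor becomes the operator $\partial\otimes 1+1\otimes\partial$, which is precisely the action of $\partial$ on $A^{\ast c}\otimes A^{\ast c}$. Hence $\Delta(\partial f)=\partial\Delta(f)$.

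The heart of the proof is coassociativity, and here I would first note the conceptual reason it must hold, then carry out an explicit verification. Conceptually, $\Delta$ is built to be dual to the product of $A$: since $A$ is finite and free, $(A^{\ast c})^{\ast c}\cong A$ canonically, and the associative product induced on $(A^{\ast c})^{\ast c}$ by $\Delta$ through \eqref{eq3} is the original product of $A$. Granting this, the computation in the proof of Proposition \ref{Dpro1} (now applied with $A^{\ast c}$ in the role of the coalgebra) shows that pairing the coassociativity defect $((\Delta\otimes I)\Delta-(I\otimes\Delta)\Delta)$ against any $F\otimes G\otimes H\in ((A^{\ast c})^{\ast c})^{\otimes 3}$ reproduces the associativity defect of this induced product, which vanishes because $A$ is associative; non-degeneracy of the evaluation pairing then forces the coassociativity defect to be zero.

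To make this fully explicit I would reduce both sides to polynomial identities in the structure constants. Expanding $({e_i}_\lambda e_j)_{\lambda+\mu}e_l={e_i}_\lambda({e_j}_\mu e_l)$ with the sesquilinearity rules $(\partial a)_\lambda b=-\lambda a_\lambda b$ and $a_\lambda(\partial b)=(\partial+\lambda)a_\lambda b$ gives the associativity relation
\begin{equation*}
\sum_s P_s^{ij}(\lambda,-\lambda-\mu)P_k^{sl}(\lambda+\mu,\partial)=\sum_s P_s^{jl}(\mu,\partial+\lambda)P_k^{is}(\lambda,\partial),
\end{equation*}
while expanding $(\Delta\otimes I)\Delta(e_k^\ast)$ and $(I\otimes\Delta)\Delta(e_k^\ast)$ (tracking that $\partial$ on a tensor factor becomes $\partial_1+\partial_2$ once $\Delta$ is applied there) turns coassociativity, after inserting $Q_k^{ij}(x,y)=P_k^{ij}(x,-x-y)$, into
\begin{equation*}
\sum_s P_k^{sl}(x_1+x_2,-x_1-x_2-x_3)P_s^{ij}(x_1,-x_1-x_2)=\sum_s P_k^{is}(x_1,-x_1-x_2-x_3)P_s^{jl}(x_2,-x_2-x_3),
\end{equation*}
where $x_1,x_2,x_3$ denote $\partial$ acting on the three tensor factors. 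The substitution $\lambda=x_1,\ \mu=x_2,\ \partial=-x_1-x_2-x_3$ carries the first identity onto the second, and since it is an invertible linear change of the three formal variables, associativity of $A$ yields coassociativity of $\Delta$. The main obstacle in either route is bookkeeping: keeping the asymmetric sesquilinearity substitutions and the $\partial$-action on the tensor factors straight, so that the coassociativity identity lines up with associativity under the correct change of variables. Once that alignment is pinned down, the matching is immediate.
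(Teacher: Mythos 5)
Your proposal is correct and follows essentially the same route as the paper: reduce coassociativity of $\Delta$ to a polynomial identity in the structure constants $P_k^{ij}$ and observe that it is exactly the associativity identity of $A$ under the invertible substitution $\lambda=x_1$, $\mu=x_2$, $\partial=-x_1-x_2-x_3$ (the paper phrases this in terms of the $Q_k^{ij}$ rather than the $P_k^{ij}$, but the computation is the same). The added conceptual duality remark is only motivation and is not needed, since your explicit verification — which matches the paper's — carries the proof.
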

\begin{proof}
By (\ref{eq4}), $\Delta$ is a $\mathbb{C}[\partial]$-module
homomorphism. By the definition of $\Delta$, we have
\begin{eqnarray*}
&&(I\otimes\Delta)\Delta(e_k^\ast)-(\Delta\otimes I)\Delta(e_k^\ast)\\
&&=\sum_{i,j,l,r}Q_k^{ij}(\partial\otimes 1\otimes 1, 1\otimes\partial\otimes 1+1\otimes 1\otimes \partial)\\
&&\times
Q_j^{lr}(1\otimes\partial\otimes  1, 1\otimes 1\otimes \partial)(e_i^\ast\otimes e_l^\ast\otimes e_r^\ast) -\sum_{i,j,l,r}Q_k^{ij}(\partial\otimes 1\otimes 1+1\otimes\partial\otimes 1, 1\otimes 1\otimes \partial)\\
&&\times Q_i^{lr}(\partial\otimes 1\otimes  1, 1\otimes
\partial\otimes 1)(e_l^\ast\otimes e_r^\ast\otimes
e_j^\ast).\end{eqnarray*} On the other hand, since  ${e_i}_\lambda
({e_l}_\mu e_r)=({e_i}_\lambda e_l)_{\lambda+\mu} e_r$, we have
\begin{eqnarray*}\label{eq7}
\sum_j P_j^{lr}(\mu,\lambda+\partial)P_k^{ij}(\lambda,\partial)
=\sum_j
P_j^{il}(\lambda,-\lambda-\mu)P_k^{jr}(\lambda+\mu,\partial).
\end{eqnarray*}
Since $Q_k^{ij}(x,y)=P_k^{ij}(x,-x-y)$, we have
\begin{eqnarray}\label{eq8}
\sum_j
Q_j^{lr}(\mu,-\lambda-\mu-\partial)Q_k^{ij}(\lambda,-\lambda-\partial)
=\sum_j
Q_j^{il}(\lambda,\mu)Q_k^{jr}(\lambda+\mu,-\lambda-\mu-\partial).
\end{eqnarray}
Set $$\lambda=\partial \otimes 1\otimes 1, \;\;\mu=1\otimes
\partial \otimes 1,\;\;\partial=-\partial\otimes 1\otimes
1-1\otimes \partial\otimes 1-1\otimes 1\otimes \partial.$$ Then by
(\ref{eq8}), we have
\begin{eqnarray*}&&\label{eq6}\sum_jQ_k^{ij}(\partial\otimes 1\otimes 1, 1\otimes\partial\otimes 1+1\otimes 1\otimes \partial)\cdot Q_j^{lr}(1\otimes\partial\otimes  1, 1\otimes 1\otimes \partial)\\
&&=\sum_jQ_k^{jr}(\partial\otimes 1\otimes
1+1\otimes\partial\otimes 1, 1\otimes 1\otimes \partial)\cdot
Q_j^{il}(\partial\otimes 1\otimes  1, 1\otimes \partial\otimes
1).\nonumber\end{eqnarray*} Therefore for all $k\in
\{1,\cdots,n\}$, we have
\begin{eqnarray*}\label{eq5}(I\otimes\Delta)\Delta(e_k^\ast)=(\Delta\otimes I)\Delta(e_k^\ast).\end{eqnarray*}
Hence the conclusion holds.
\end{proof}

\begin{cor}
Let $(A=\mathbb{C}[\partial]a, \Delta)$ be an associative conformal coalgebra which is free of rank 1 as a $\mathbb{C}[\partial]$-module.
Then $\Delta(a)=k a\otimes a$ for some $k\in \mathbb{C}$.
\end{cor}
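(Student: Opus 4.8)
The plan is to reduce coassociativity to a single polynomial identity in one unknown polynomial and then classify its solutions.

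First I would exploit that $A=\mathbb{C}[\partial]a$ is free of rank $1$. Then $A\otimes A$ is free of rank $1$ over the polynomial ring $\mathbb{C}[\partial\otimes 1,\,1\otimes\partial]$ with generator $a\otimes a$, because $\partial^i a\otimes\partial^j a=(\partial\otimes1)^i(1\otimes\partial)^j(a\otimes a)$. Hence there is a unique $P(x,y)\in\mathbb{C}[x,y]$ with $\Delta(a)=P(\partial\otimes1,\,1\otimes\partial)(a\otimes a)$, and since $\Delta$ is a $\mathbb{C}[\partial]$-module homomorphism it is determined by $P$ via $\Delta(\partial^n a)=\partial^n\Delta(a)$. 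The assertion to be proved is exactly that $P$ is a constant $k$.

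Next I would translate coassociativity. Writing $x_i$ for the action of $\partial$ in the $i$-th tensor slot, $A\otimes A\otimes A$ is free of rank $1$ over $\mathbb{C}[x_1,x_2,x_3]$ on $a\otimes a\otimes a$, where $\partial$ acts as $x_2+x_3$ (resp. $x_1+x_2$) on the two slots produced by the inner $\Delta$. A direct computation gives
\[(I\otimes\Delta)\Delta(a)=P(x_1,x_2+x_3)\,P(x_2,x_3)\,(a\otimes a\otimes a),\]
\[(\Delta\otimes I)\Delta(a)=P(x_1+x_2,x_3)\,P(x_1,x_2)\,(a\otimes a\otimes a),\]
so comparing coefficients (the module is free of rank $1$) turns the coassociativity relation \eqref{eq:coass} into the polynomial identity
\[P(x,y+z)\,P(y,z)=P(x+y,z)\,P(x,y)\qquad(\star)\]
in $\mathbb{C}[x,y,z]$.

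Then I would solve $(\star)$. Assuming $P\neq0$, setting $z=0$ and cancelling $P$ (the ring is a domain) shows $P(x,0)=P(0,y)=P(0,0)$ is a constant. The crucial observation is that the top-degree homogeneous part $P_d$ of $P$ again satisfies $(\star)$, since the degree-$2d$ parts of the two sides are $P_d(x,y+z)P_d(y,z)$ and $P_d(x+y,z)P_d(x,y)$. If $d\geq1$, then $P_d(x,0)$ is a scalar multiple of $x^d$ which must equal $P_d(0,0)=0$, forcing $y\mid P_d$ and, symmetrically, $x\mid P_d$. Writing $P_d=xy R$, substituting into $(\star)$ and cancelling $xyz$ yields an identity of the same shape for $R$ with the linear factors $(y+z)$, $(x+y)$ inserted; setting $z=0$ and using that $t\mapsto t\,R(t,0)$ is then a constant equal to its value $0$ at the origin forces $R(x,0)=0$, and symmetrically $R(0,y)=0$, so $xy\mid R$. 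The same cancellation shows that each step reproduces an equation of the same type with one extra power of the linear factors, so the argument is self-sustaining and yields $(xy)^n\mid P_d$ for every $n$. This is impossible for a nonzero polynomial, so $d=0$; hence $P=k$ is constant and $\Delta(a)=k\,a\otimes a$.

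The main obstacle is precisely this last step: ruling out nonconstant solutions of $(\star)$. The tempting ``take logarithms'' shortcut, which linearises $(\star)$ to an additive $2$-cocycle condition, only applies where $P(0,0)\neq0$, whereas every homogeneous piece of positive degree lives in the degenerate regime $P(0,0)=0$. The infinite-descent/divisibility argument is what handles this degenerate case, and the delicate point is keeping the bookkeeping of the repeatedly inserted linear factors correct so that the $z=0$ reduction continues to force divisibility by $xy$ at each stage.
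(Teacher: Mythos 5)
Your proof is correct, but it follows a genuinely different route from the paper's. The paper's own argument is a two-line reduction via duality: by Proposition~\ref{Dpro1} a rank-one associative conformal coalgebra dualizes to a rank-one associative conformal algebra $\mathbb{C}[\partial]a$ with $a_\lambda a=Q(\lambda,\partial)a$, the paper then \emph{asserts} the classification $a_\lambda a=ka$, and Proposition~\ref{pp1} (through $Q^{ij}_k(x,y)=P^{ij}_k(x,-x-y)$) transports this back to $\Delta(a)=ka\otimes a$. You instead stay entirely on the coalgebra side: writing $\Delta(a)=P(\partial\otimes 1,1\otimes\partial)(a\otimes a)$, you correctly reduce coassociativity to the identity $P(x,y+z)P(y,z)=P(x+y,z)P(x,y)$ in $\mathbb{C}[x,y,z]$ and then rule out nonconstant solutions by infinite descent on the top homogeneous part. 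I checked the descent: at stage $n$ the equation reads $(y+z)^nT_n(x,y+z)T_n(y,z)=(x+y)^nT_n(x+y,z)T_n(x,y)$; setting $z=0$ (resp.\ $x=0$) and cancelling the nonzero factor $T_n(x,y)$ (resp.\ $T_n(y,z)$) in the integral domain $\mathbb{C}[x,y,z]$ makes $t^nT_n(t,0)$ (resp.\ $t^nT_n(0,t)$) a constant equal to its value $0$ at the origin, so $xy\mid T_n$ and substituting $T_n=xyT_{n+1}$ reproduces the same equation with exponent $n+1$ after cancelling $xyz$; hence $(xy)^n\mid P_d$ for all $n$, which is impossible for $P_d\neq 0$. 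Note that under the substitution $P(x,y)=Q(x,-x-y)$ your functional equation is exactly the associativity constraint for $a_\lambda a=Q(\lambda,\partial)a$, so your argument in effect supplies a self-contained proof of the rank-one classification that the paper cites without proof; the paper's version is shorter and reuses its duality machinery, while yours is elementary and complete at the cost of the divisibility bookkeeping.
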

\begin{proof}
If $A=\mathbb{C}[\partial]a$ is an associative conformal algebra,
then $a_\lambda a=k a$ for some $k\in \mathbb{C}$. Therefore this
conclusion follows from Proposition \ref{pp1}.
\end{proof}

In the sequel, denote $\partial\otimes 1+1\otimes \partial$ by
$\partial^{\otimes^2}$ and $\partial\otimes 1\otimes 1+1\otimes
\partial\otimes 1+1\otimes 1\otimes \partial$ by
$\partial^{\otimes^3}$. Moreover, for any vector space $V$, let
$\tau:V\otimes V\rightarrow V\otimes V$ be the flip map, that is,
$$\tau(x\otimes y)=y\otimes x,\;\;\forall\ x,y\in V.$$

\begin{thm}\label{thm2}
Let $A$ be a finite associative conformal algebra which is free as a $\mathbb{C}[\partial]$-module. Suppose there is another associative conformal algebra structure on the $\mathbb{C}[\partial]$-module $A^{\ast c}$ obtained from a $\mathbb{C}[\partial]$-module homomorphism $\Delta:A\rightarrow A\otimes A$. Then $(A,A^{\ast c}, R_A^\ast, L_A^\ast, R_{A^{\ast c}}^\ast, L_{A^{\ast c}}^\ast)$ is a matched pair of associative conformal algebras if and only if $\Delta$ satisfies
\begin{eqnarray}
\label{thq1}\Delta(a_\lambda b)=(I\otimes {L_A(a)}_\lambda)\Delta(b)+({R_A(b)}_{-\lambda-\partial^{\otimes^2}}\otimes I)\Delta(a),\\
\label{thq2}({L_A(b)}_{-\lambda-\partial^{\otimes^2}}\otimes I-I\otimes R_A(b)_{-\lambda-\partial^{\otimes^2}})\Delta(a)+\tau({L_A(a)}_\lambda \otimes I-I\otimes {R_A(a)}_\lambda)\Delta(b)=0,
\end{eqnarray}
for all $a$, $b\in A$.   \end{thm}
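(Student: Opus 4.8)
The plan is to build on Theorem~\ref{t1}, which already identifies the matched pair condition for $(A,A^{\ast c}, R_A^\ast, L_A^\ast, R_{A^{\ast c}}^\ast, L_{A^{\ast c}}^\ast)$ with the two identities \eqref{es7} and \eqref{es8} holding in $A^{\ast c}$. It therefore suffices to prove that \eqref{es7} is equivalent to \eqref{thq1} and that \eqref{es8} is equivalent to \eqref{thq2}; in words, \eqref{thq1} and \eqref{thq2} are exactly the transcriptions of \eqref{es7} and \eqref{es8} into compatibility conditions on the coproduct $\Delta$.

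Both equivalences are proved by dualization. Since $A$ is finite and free, the conformal pairing between $A$ and $A^{\ast c}$ is non-degenerate, so an identity between two elements of $A^{\ast c}$ holds if and only if it holds after evaluating $(\cdot)_\nu(c)$ on every $c\in A$, and likewise an identity in $A\otimes A$ is detected by pairing with all $f\otimes g$, $f,g\in A^{\ast c}$. First I would take \eqref{es7}, an equality in $A^{\ast c}$, and apply both sides to a generic $c\in A$. Using the defining formula $(R_A^\ast(a)_\lambda f)_\mu u=f_{\mu-\lambda}(R_A(a)_\lambda u)=f_{\mu-\lambda}(u_{-\lambda-\partial}a)$, the analogous formulas for $L_A^\ast$ and for the dual action $L_{A^{\ast c}}^\ast$, and the product on $A^{\ast c}$ dual to $\Delta$ from Proposition~\ref{Dpro1}, the left-hand side becomes the pairing of $f\otimes g$, at suitable spectral parameters, with $\Delta(c_{-\lambda-\partial}a)$, while the right-hand side becomes the corresponding pairing of $f\otimes g$ with the two terms on the right-hand side of \eqref{thq1}. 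Reading off equality for all $f,g$ then yields precisely \eqref{thq1}, the conformal sesquilinearity of $\Delta$ being what turns the shifts $-\lambda-\partial$ from the dual right actions into the operators $R_A(b)_{-\lambda-\partial^{\otimes^2}}$ acting on the tensor legs of $\Delta(a)$.

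The equivalence of \eqref{es8} and \eqref{thq2} is handled in the same manner: pair \eqref{es8} with a generic $c\in A$, expand each of its four terms by the dual-action formulas and the coproduct, and invoke non-degeneracy to pass between the identity in $A^{\ast c}$ and the identity in $A\otimes A$. The flip $\tau$ in \eqref{thq2} appears because the two terms built from $R_{A^{\ast c}}^\ast$ and $L_{A^{\ast c}}^\ast$ in \eqref{es8} couple $f$ and $g$ to $\Delta(a)$ and $\Delta(b)$ in opposite tensor orders. As a fully computational alternative, one may instead argue exactly as in the proof of Theorem~\ref{t1}: fix a $\mathbb{C}[\partial]$-basis $\{e_1,\cdots,e_n\}$ with dual basis $\{e_1^\ast,\cdots,e_n^\ast\}$, introduce the structure constants $P_k^{ij}$ and the coproduct coefficients $Q_k^{ij}(x,y)=P_k^{ij}(x,-x-y)$ from Proposition~\ref{pp1}, and check that the polynomial identity produced by \eqref{es7} (resp. \eqref{es8}) coincides with the one produced by \eqref{thq1} (resp. \eqref{thq2}) after a suitable relabeling of the spectral parameters.

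The main obstacle is entirely one of bookkeeping rather than of idea: one must track the several parameters $\lambda,\mu,\nu$ together with the tensor leg on which each $\partial$ acts, and verify that the combined effect of the $-\lambda-\partial$ shifts in the dual right-module actions and the sesquilinearity of $\Delta$ reproduces exactly the operators $L_A(b)_{-\lambda-\partial^{\otimes^2}}$ and $R_A(b)_{-\lambda-\partial^{\otimes^2}}$, and the arguments of $L_A(a)_\lambda$ and $R_A(a)_\lambda$, that appear in \eqref{thq1} and \eqref{thq2}, with no residual shift left over.
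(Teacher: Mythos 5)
Your proposal is correct and follows essentially the same route as the paper: reduce to Theorem~\ref{t1} and show that \eqref{es7} and \eqref{es8} are the dualizations of \eqref{thq1} and \eqref{thq2} under the non-degenerate pairing between $A$ and $A^{\ast c}$. The paper implements exactly the computational alternative you describe at the end — comparing the polynomial identities in the structure constants $P_k^{ij}$, $R_k^{ij}$ produced by \eqref{es7}/\eqref{thq1} (and similarly for the second pair) and matching them by a relabeling of the spectral parameters — which is just your pairing argument written out in a basis.
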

\begin{proof}
With the same assumption as that in the proof of Theorem \ref{t1} and by Proposition \ref{pp1}, we have
\begin{eqnarray*}\Delta(e_k)=\sum_{i,j}Q_k^{ij}(\partial\otimes 1,1\otimes \partial) e_i\otimes e_j,~~{\rm where}~~Q_k^{ij}(x,y)=R_k^{ij}(x,-x-y).\end{eqnarray*}
Considering the coefficient of $e_j\otimes e_k$ in
\begin{eqnarray*}
\Delta({e_s}_\lambda e_i)=(I\otimes {L_A(e_s)}_\lambda)\Delta(e_i)+({R_A(e_i)}_{-\lambda-\partial\otimes 1-1\otimes \partial}\otimes I)\Delta(e_s),
\end{eqnarray*}
we have
\begin{eqnarray}
&&\sum_{i=1}^nP_t^{si}(\lambda,\partial\otimes 1+1\otimes \partial)R_t^{jk}(\partial\otimes 1,-1\otimes \partial-\partial\otimes 1)\nonumber\\
&&=\sum_{i=1}^nR_i^{jt}(\partial\otimes 1,-\lambda-\partial\otimes 1-1\otimes \partial)P_k^{st}(\lambda,1\otimes \partial)\nonumber\\
&&+\sum_{i=1}^nR_s^{tk}(-\lambda-1\otimes
\partial,\lambda)P_j^{ti}(\lambda+1\otimes
\partial,\partial\otimes 1).\label{pq1}
\end{eqnarray}
On the other hand, (\ref{es7}) holds for any $a\in A$, $f$, $g\in A^{\ast c}$ if and only if
\begin{eqnarray*}(R_A^\ast(e_i)_\lambda ({e_j^\ast}_\mu e_k^\ast))_\nu e_s=((R_A(e_i)_\lambda e_j^\ast)_{\lambda+\mu}e_k^\ast
+R_A(L_{A^{\ast c}}^\ast(e_j^\ast)_{-\lambda-\partial} e_i)_{\lambda+\mu} e_k^\ast)_\nu e_s,~~~~\forall~~i,j,k,s,
\end{eqnarray*}
if and only if the following equation holds:
\begin{eqnarray}
\sum_{i=1}^nP_t^{si}(-\nu,\nu-\lambda)R_t^{jk}(\mu,\lambda-\nu)&=&\sum_{i=1}^nR_i^{jt}(\mu,\lambda)P_k^{st}(-\nu, \nu-\lambda-\mu)\nonumber\\
\label{pq2}&&+\sum_{i=1}^nR_s^{tk}(\lambda+\mu,-\nu)P_j^{ti}(-\lambda-\mu,\mu)),~~~~~\forall~~j,k,s,t.
\end{eqnarray}
Obviously, (\ref{pq1}) is exactly (\ref{pq2}) by replacing
$\lambda$ by $-\nu$, $1\otimes \partial$ by $\nu-\lambda-\mu$, and
$\partial\otimes 1$ by $\mu$ in (\ref{pq1}). Then (\ref{thq1})
holds if and only if (\ref{es7}) holds. With a similar discussion,
(\ref{thq2}) holds if and only if (\ref{es8}) holds. Then this
conclusion follows from Theorem \ref{t1}.
\end{proof}

\begin{defi}\label{de1}{\rm
Let $(A,\cdot_\lambda \cdot)$ be an associative conformal algebra and $(A,\Delta)$ be an associative conformal coalgebra. If $\Delta$ satisfies (\ref{thq1}), then $(A, \cdot_\lambda\cdot, \Delta)$ is called an {\bf infinitesimal conformal bialgebra}. Moreover, if $\Delta$ also satisfies (\ref{thq2}), then $(A, \cdot_\lambda\cdot, \Delta)$ is called an {\bf antisymmetric infinitesimal (ASI) conformal bialgebra}.
}\end{defi}
\begin{rmk}{\rm
By the definition of infinitesimal bialgebra given in \cite{A1},
the corresponding definition of conformal version of infinitesimal
bialgebra should be given as follows. $(A,\circ_\lambda )$ is an
associative conformal algebra and $(A,\Delta)$ is an associative
conformal coalgebra satisfying
\begin{eqnarray}
\Delta(a\circ_\lambda b)=({L_A(a)}_\lambda \otimes
I)\Delta(b)+(I\otimes
{R_A(b)}_{-\lambda-\partial^{\otimes^2}})\Delta(a),\;\;\forall\
a,b\in A.
\end{eqnarray}
We would like to point out that the definition of infinitesimal
conformal bialgebra given in Definition \ref{de1} is equivalent to
this definition if we replace $(A,\circ_\lambda )$ by the opposite
algebra of $(A,\cdot_\lambda \cdot)$ in the sense that
$a\circ_\lambda b=b_{-\lambda-\partial}a$ for all $a$, $b\in A$.
}\end{rmk}

Combining Theorems~\ref{thm1} and ~\ref{thm2} together, we have the following conclusion.

\begin{cor}\label{cc1}
Let $A$ be a finite associative conformal algebra which is free as a $\mathbb{C}[\partial]$-module. Suppose there is another associative conformal algebra structure on the $\mathbb{C}[\partial]$-module $A^{\ast c}$ obtained from a $\mathbb{C}[\partial]$-module homomorphism $\Delta:A\rightarrow A\otimes A$. Then the following conditions are equivalent:
\begin{enumerate}
\item[(1)] $(A,\cdot_\lambda\cdot, \Delta)$ is an ASI conformal
bialgebra; \item[(2)] There is a double construction of Frobenius
conformal algebra associated to $A$ and $A^{\ast c}$;\item[(3)]
$(A, A^{\ast c}, R_A^\ast, L_A^\ast, R_{A^{\ast c}}^\ast,
L_{A^{\ast c}}^\ast)$ is a matched pair of associative conformal
algebras.
\end{enumerate}
\end{cor}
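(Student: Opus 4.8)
The plan is to establish Corollary~\ref{cc1} purely as a formal consequence of the three theorems already proved in the excerpt, without reopening any of the underlying $\lambda$-product computations. The strategy is to prove the equivalence by stringing together the biconditionals supplied by Theorems~\ref{thm1}, \ref{t1} and \ref{thm2}. First I would note that all three statements share the same standing hypotheses: $A$ is a finite associative conformal algebra, free as a $\mathbb{C}[\partial]$-module, and $A^{\ast c}$ carries an associative conformal algebra structure coming from a $\mathbb{C}[\partial]$-module homomorphism $\Delta\colon A\to A\otimes A$. These are exactly the hypotheses of Corollary~\ref{cc1}, so no extra work is needed to align the settings.

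The core of the argument is a short logical chain. By Theorem~\ref{thm2}, condition~(1), that $(A,\cdot_\lambda\cdot,\Delta)$ is an ASI conformal bialgebra, means precisely that $\Delta$ satisfies both (\ref{thq1}) and (\ref{thq2}) (this is the content of Definition~\ref{de1}), and that in turn is equivalent to $(A, A^{\ast c}, R_A^\ast, L_A^\ast, R_{A^{\ast c}}^\ast, L_{A^{\ast c}}^\ast)$ being a matched pair of associative conformal algebras. This is exactly condition~(3). Thus the equivalence $(1)\Leftrightarrow(3)$ is immediate from Theorem~\ref{thm2}. For $(2)\Leftrightarrow(3)$, I would invoke Theorem~\ref{thm1} directly: it states that, under the same hypotheses, there is a double construction of Frobenius conformal algebra associated to $A$ and $A^{\ast c}$ if and only if $(A, A^{\ast c}, R_A^\ast, L_A^\ast, R_{A^{\ast c}}^\ast, L_{A^{\ast c}}^\ast)$ is a matched pair. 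Since $(1)\Leftrightarrow(3)$ and $(2)\Leftrightarrow(3)$, all three conditions are mutually equivalent.

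I would therefore write the proof as essentially two invocations: ``By Theorem~\ref{thm2} (together with Definition~\ref{de1}), (1) holds if and only if (3) holds. By Theorem~\ref{thm1}, (2) holds if and only if (3) holds. Hence (1), (2) and (3) are equivalent.'' The one point worth stating explicitly for the reader is that Theorem~\ref{t1} is the technical bridge sitting between Theorems~\ref{thm1} and \ref{thm2}, reformulating the matched pair condition in terms of the algebraic identities (\ref{es7}) and (\ref{es8}); but for the corollary itself one only needs the two biconditionals ending in condition~(3), so Theorem~\ref{t1} can be left implicit.

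There is no real obstacle here, since the corollary is purely a repackaging of results already established. The only genuine care required is bookkeeping: verifying that the bimodule structure maps appearing in all three theorems are literally the same quadruple $(R_A^\ast, L_A^\ast, R_{A^{\ast c}}^\ast, L_{A^{\ast c}}^\ast)$, so that the matched pair appearing as the right-hand side of Theorem~\ref{thm1}'s biconditional coincides on the nose with the one appearing in Theorem~\ref{thm2}. Since the statements are written with identical notation, this identification is transparent, and the proof is a one-line chain of equivalences.
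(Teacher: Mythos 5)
Your proposal is correct and matches the paper exactly: the paper's entire justification is the sentence ``Combining Theorems~\ref{thm1} and~\ref{thm2} together, we have the following conclusion,'' which is precisely your chain $(1)\Leftrightarrow(3)$ via Theorem~\ref{thm2} and $(2)\Leftrightarrow(3)$ via Theorem~\ref{thm1}. Your added remark that the hypotheses and the quadruple $(R_A^\ast, L_A^\ast, R_{A^{\ast c}}^\ast, L_{A^{\ast c}}^\ast)$ line up across the two theorems is exactly the bookkeeping the paper leaves implicit.
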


\begin{pro}\label{cc2}
Let $(A,\cdot_\lambda \cdot, \Delta_A)$ be a finite ASI conformal bialgebra, where $A$ is free as a $\mathbb{C}[\partial]$-module. Then
$(A^{\ast c}, \cdot_\lambda \cdot, \Delta_{A^{\ast c}})$ is also an ASI conformal bialgebra, where $\cdot_\lambda \cdot$ and $\Delta_{A^{\ast c}}$ are on $A^{\ast c}$ given by (\ref{eq3}) and (\ref{eq4}) respectively.
\end{pro}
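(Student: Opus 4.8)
The plan is to exploit the duality symmetry that is already built into the entire framework, rather than to verify the defining identities (\ref{thq1}) and (\ref{thq2}) for $(A^{\ast c}, \Delta_{A^{\ast c}})$ by direct computation. The key observation is that by Corollary \ref{cc1}, the statement that $(A, \cdot_\lambda\cdot, \Delta_A)$ is an ASI conformal bialgebra is equivalent to the statement that $(A, A^{\ast c}, R_A^\ast, L_A^\ast, R_{A^{\ast c}}^\ast, L_{A^{\ast c}}^\ast)$ is a matched pair of associative conformal algebras. Thus I would first apply Corollary \ref{cc1} to the given bialgebra to obtain this matched pair, and then argue that the \emph{same data} constitutes, up to the canonical identification $A^{\ast\ast c}\cong A$ (valid since $A$ is finite and free), a matched pair witnessing that $(A^{\ast c}, \cdot_\lambda\cdot, \Delta_{A^{\ast c}})$ is an ASI conformal bialgebra.

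Concretely, the second step is to identify what the ``dual'' matched pair should be. To show $(A^{\ast c},\cdot_\lambda\cdot,\Delta_{A^{\ast c}})$ is an ASI conformal bialgebra via Corollary \ref{cc1}, one needs that $(A^{\ast c}, A^{\ast\ast c}, R_{A^{\ast c}}^\ast, L_{A^{\ast c}}^\ast, R_{A^{\ast\ast c}}^\ast, L_{A^{\ast\ast c}}^\ast)$ is a matched pair. Under the natural isomorphism $A^{\ast\ast c}\cong A$, the bimodule structure maps $R_{A^{\ast\ast c}}^\ast, L_{A^{\ast\ast c}}^\ast$ become identified with $R_A^\ast, L_A^\ast$ (this is exactly the content of double-dualization applied to Proposition \ref{pr1}, together with the Example giving $(A^{\ast c}, R_A^\ast, L_A^\ast)$ as a bimodule). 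Hence the required matched pair for $A^{\ast c}$ is precisely the tuple $(A^{\ast c}, A, R_{A^{\ast c}}^\ast, L_{A^{\ast c}}^\ast, R_A^\ast, L_A^\ast)$, which is obtained from the matched pair $(A, A^{\ast c}, R_A^\ast, L_A^\ast, R_{A^{\ast c}}^\ast, L_{A^{\ast c}}^\ast)$ simply by swapping the two roles.

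The crux of the argument, therefore, is the symmetry of the matched-pair axioms under interchanging the two algebras. I would verify that the system of relations (\ref{es1})--(\ref{es6}) in Proposition \ref{pro1} is invariant under the simultaneous substitution $A\leftrightarrow B$, $l_A\leftrightarrow l_B$, $r_A\leftrightarrow r_B$ (together with relabeling the elements $a,b\leftrightarrow x,y$). Inspecting the six relations, one sees they split into symmetric pairs: (\ref{es1}) and (\ref{es3}) are interchanged, (\ref{es2}) and (\ref{es4}) are interchanged, and (\ref{es5}) and (\ref{es6}) are interchanged. Consequently, $(A, B, l_A, r_A, l_B, r_B)$ is a matched pair if and only if $(B, A, l_B, r_B, l_A, r_A)$ is a matched pair. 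Applying this to our situation, the matched pair for $A$ yields at once the matched pair $(A^{\ast c}, A, R_{A^{\ast c}}^\ast, L_{A^{\ast c}}^\ast, R_A^\ast, L_A^\ast)$ for $A^{\ast c}$.

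The main obstacle I anticipate is the careful bookkeeping in the second step: one must check that the associative conformal coalgebra structure $\Delta_{A^{\ast c}}$ obtained from the algebra structure $\cdot_\lambda\cdot$ on $A$ (via Proposition \ref{pp1} applied to $A^{\ast c}$) is genuinely compatible with this dual matched pair, i.e., that the $\mathbb{C}[\partial]$-module homomorphism $\Delta_{A^{\ast c}}: A^{\ast c}\to A^{\ast c}\otimes A^{\ast c}$ inducing the algebra structure on $A^{\ast\ast c}\cong A$ indeed recovers the original product on $A$. This is where the finiteness and freeness of $A$ are essential, so that $A^{\ast\ast c}\cong A$ canonically and the constructions of Propositions \ref{Dpro1} and \ref{pp1} are mutually inverse. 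Once this compatibility is confirmed, the conclusion is immediate from Corollary \ref{cc1} applied to $A^{\ast c}$, with no further identity-by-identity verification required.
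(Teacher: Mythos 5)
Your proposal is correct and follows essentially the same route as the paper: both deduce the result from Corollary \ref{cc1} together with a symmetry of the intermediate equivalent structure under interchanging $A$ and $A^{\ast c}$ (via the identification $A^{\ast\ast c}\cong A$, which uses finiteness and freeness). The only difference is that the paper reads off the symmetry from the double construction $A\bowtie A^{\ast c}$ (item (2) of the corollary), whereas you verify it on the matched-pair axioms (\ref{es1})--(\ref{es6}) (item (3)), which indeed pair up under the swap exactly as you describe.
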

\begin{proof}
It follows directly from Corollary \ref{cc1} and the symmetry of
the associative conformal algebra $A$ and $A^{\ast c}$ in the
double construction $A\bowtie A^{\ast c}$ of Frobenius conformal
algebra associated to $A$ and $A^{\ast c}$.
\end{proof}

At the end of this section, we present two examples of ASI
conformal bialgebras.

\begin{ex}
Let $(A,\cdot, \overline{\Delta})$ be an antisymmetric infinitesimal bialgebra given in \cite{Bai1}. Then $\text{Cur}(A)$ has a natural ASI conformal bialgebra structure defined by
\begin{eqnarray*}
\Delta(p(\partial)a)=p(\partial\otimes 1+1\otimes \partial)\overline{\Delta}(a),\;\;\forall\ \text{$p(\partial)\in \mathbb{C}[\partial]$, $a\in A$.}
\end{eqnarray*}
\end{ex}

\begin{ex}
Let $p(\lambda)\in \mathbb{C}[\lambda]$ and
$A=\mathbb{C}[\partial]a\oplus \mathbb{C}[\partial]b$ be a rank
two associative conformal algebra with the following
$\lambda$-product:
\begin{eqnarray*}
a_\lambda a=p(\lambda+\partial)b,~~~a_\lambda b=b_\lambda a=b_\lambda b=0.
\end{eqnarray*}
Define a $\mathbb{C}[\partial]$-module homomorphism
$\Delta:A\rightarrow A\otimes A$ by
\begin{eqnarray*}
\Delta(a)=a\otimes b,~~~\Delta(b)=b\otimes b.
\end{eqnarray*}
By a straightforward computation, $(A,\cdot_\lambda \cdot,\Delta)$
is an ASI conformal bialgebra if and only if $p(\lambda)$ is an
odd polynomial, i.e., $p(\lambda)=-p(-\lambda)$.
\end{ex}

\section{Coboundary antisymmetric infinitesimal conformal bialgebras}
We consider a special class of ASI conformal bialgebras which are called coboundary ASI conformal bialgebras.

\begin{defi}{\rm
For an ASI conformal bialgebra $(A,\cdot_\lambda \cdot, \Delta)$, if there exists $r\in A\otimes A$ such that
\begin{eqnarray}\label{eQ1}
\Delta(a)=(I\otimes L_A(a)_\lambda-R_A(a)_{\lambda}\otimes
I)r|_{\lambda=-\partial^{\otimes^2}}, \;\;\forall\ a\in A,
\end{eqnarray}
then $(A,\cdot_\lambda \cdot, \Delta)$ is called {\bf coboundary}.}
\end{defi}

For $r=\sum_i r_i\otimes l_i\in A\otimes A$, define
\begin{eqnarray}
r\bullet r=\sum_{i,j} r_i\otimes r_j\otimes {l_i}_\mu l_j|_{\mu=\partial\otimes 1\otimes 1}-r_i\otimes {r_j}_\mu l_i\otimes l_j|_{\mu=-\partial^{\otimes^2}\otimes 1}+{r_i}_\mu r_j\otimes l_i\otimes l_j|_{\mu=1\otimes \partial\otimes 1}.
\end{eqnarray}

\begin{pro}\label{ppr1}
Let $A$ be an associative conformal algebra and $r=\sum_i r_i\otimes l_i\in A\otimes A$. Then the map defined by (\ref{eQ1}) gives an associative conformal coalgebra $(A,\Delta)$ if and only if
\begin{eqnarray}\label{qw1}
(I\otimes I\otimes
L_A(a)_{-\partial^{\otimes^3}}-R_A(a)_{-\partial^{\otimes^3}}\otimes
I\otimes I) (r\bullet r)=0,\;\;\forall\ a\in A.
\end{eqnarray}
\end{pro}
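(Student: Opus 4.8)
The plan is to compute the coassociator $(I\otimes\Delta)\Delta(a)-(\Delta\otimes I)\Delta(a)$ directly and to identify it, up to an overall sign, with the left-hand side of (\ref{qw1}); since (\ref{eq:coass}) is exactly the vanishing of this coassociator and since a global sign does not affect vanishing, the asserted equivalence follows immediately. First I would record the explicit form of $\Delta$. Writing $r=\sum_s r_s\otimes l_s$ and using $L_A(a)_\lambda b=a_\lambda b$, $R_A(a)_\lambda b=b_{-\lambda-\partial}a$, equation (\ref{eQ1}) becomes
\begin{eqnarray*}
\Delta(a)=\sum_s\Big(r_s\otimes(a_\lambda l_s)-\big((r_s)_{-\lambda-\partial}a\big)\otimes l_s\Big)\Big|_{\lambda=-\partial^{\otimes^2}}.
\end{eqnarray*}
A short preliminary check, using conformal sesquilinearity together with the substitution $\lambda=-\partial^{\otimes^2}$, shows that $\Delta$ is a $\mathbb{C}[\partial]$-module homomorphism, so that $(A,\Delta)$ is an associative conformal coalgebra precisely when $\Delta$ is coassociative; this reduces the proposition to the coassociator identity.

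For the main step I would apply $\Delta$ a second time to each tensor leg. Expanding $(I\otimes\Delta)\Delta(a)$ splits the second leg of each of the two summands above into two further terms, and likewise $(\Delta\otimes I)\Delta(a)$ splits the first leg, producing four terms on each side and eight in all. Each resulting triple product is then reduced with the associativity axiom $(a_\lambda b)_{\lambda+\mu}c=a_\lambda(b_\mu c)$ and the sesquilinearity relations, which allow me to move the inner spectral parameter onto the appropriate leg (for instance $(a_\lambda l_i)_\mu l_j=a_\lambda((l_i)_{\mu-\lambda}l_j)$ and $(r_j)_\nu(a_\lambda l_i)=((r_j)_\nu a)_{\nu+\lambda}l_i$). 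The crucial bookkeeping point is how the two successive substitutions compose: the parameter $-\partial^{\otimes^2}$ introduced by the outer $\Delta$ must be correctly distributed over the three legs created by the inner $\Delta$, and tracking this is exactly what promotes $\partial^{\otimes^2}$ to $\partial^{\otimes^3}$ in the final operator.

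I expect that the terms in which $a$ would act on the middle leg cancel pairwise between $(I\otimes\Delta)\Delta(a)$ and $(\Delta\otimes I)\Delta(a)$, since the left and right multiplications by $a$ surrounding a product $r_j\cdot l_i$ in the middle slot coincide after applying associativity. The surviving terms are precisely those in which $a$ acts by $L_A(a)$ on the third leg or by $R_A(a)$ on the first leg; collecting the factors that do not involve $a$, they assemble into the three summands of $r\bullet r$ — the term $\sum r_i\otimes r_j\otimes({l_i}_\mu l_j)$, the middle term $-\sum r_i\otimes({r_j}_\mu l_i)\otimes l_j$, and $\sum({r_i}_\mu r_j)\otimes l_i\otimes l_j$, with the indicated specializations of $\mu$. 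This yields
\begin{eqnarray*}
(I\otimes\Delta)\Delta(a)-(\Delta\otimes I)\Delta(a)=\pm\big(I\otimes I\otimes L_A(a)_{-\partial^{\otimes^3}}-R_A(a)_{-\partial^{\otimes^3}}\otimes I\otimes I\big)(r\bullet r),
\end{eqnarray*}
and the stated equivalence is immediate.

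The hardest part will be neither the algebra of $A$ nor the matching of the three pieces of $r\bullet r$, but the disciplined handling of the nested operator-valued substitutions $\lambda\mapsto-\partial^{\otimes^2}$ and $\mu\mapsto-\partial^{\otimes^2}$ that occur when $\Delta$ is applied twice, since the $\partial$'s acting on a leg that is itself being comultiplied must be split across the new legs in a manner consistent with sesquilinearity. To keep this manageable I would, if the abstract manipulation proved error-prone, fix a $\mathbb{C}[\partial]$-basis $\{e_i\}$ of $A$ and verify the identity on structure constants, exactly as in the proofs of Theorems~\ref{t1} and~\ref{thm2}, where these operator substitutions become ordinary polynomial substitutions in the spectral variables.
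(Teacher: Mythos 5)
Your proposal is correct and follows essentially the same route as the paper: expand $(I\otimes\Delta)\Delta(a)$ and $(\Delta\otimes I)\Delta(a)$ directly, observe that the terms with $a$ sandwiched in the middle leg cancel via associativity, and collect the survivors into $(I\otimes I\otimes L_A(a)_{-\partial^{\otimes^3}}-R_A(a)_{-\partial^{\otimes^3}}\otimes I\otimes I)(r\bullet r)$, with the nested specializations $\lambda,\mu\mapsto-\partial^{\otimes^2}$ distributed over three legs exactly as you describe. The paper carries this out by the abstract operator manipulation rather than on structure constants, but your fallback to a $\mathbb{C}[\partial]$-basis would work equally well.
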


\begin{proof}
By the definition of $\Delta$, we have {\small \begin{eqnarray*}
(I\otimes\Delta)\Delta(a)&=&(I\otimes\Delta)(\sum_i r_i\otimes a_\lambda l_i-\sum_i{r_i}_{-\lambda-\partial\otimes 1}a\otimes l_i)|_{\lambda={-\partial^{\otimes^2}}}\\
&=&(I\otimes\Delta)(\sum_i r_i\otimes a_{-\partial^{\otimes^2}}l_i-\sum_i{r_i}_{1\otimes \partial} a\otimes l_i)\\
&=&\sum_ir_i\otimes \Delta(a_{-\partial^{\otimes^2}}l_i)
-\sum_i{r_i}_{1\otimes \partial\otimes 1+1\otimes 1\otimes \partial} a\otimes \Delta(l_i)\\
&=&\sum_{i,j}(r_i\otimes r_j\otimes (a_{-\partial^{\otimes^2}\otimes 1}l_i)_\mu l_j-r_i\otimes {r_j}_{-\mu-1\otimes \partial\otimes 1}(a_{-\partial^{\otimes^2}\otimes 1}l_i)\otimes l_j\\
&&-({r_i}_{1\otimes \partial\otimes 1+1\otimes 1\otimes \partial} a\otimes r_j\otimes {l_i}_\mu l_j-{r_i}_{1\otimes \partial\otimes 1+1\otimes 1\otimes \partial} a\otimes {r_j}_{-\mu-1\otimes \partial\otimes 1} {l_i}\otimes l_j))_{\mu={-1\otimes\partial^{\otimes^2}}}\\
&=&\sum_{i,j}r_i\otimes r_j\otimes a_{-\partial^{\otimes^3}}({l_i}_{\partial\otimes 1\otimes 1} l_j)
-\sum_{i,j}r_i\otimes {r_j}_{1\otimes 1\otimes \partial}(a_{-\partial^{\otimes^2}\otimes 1}l_i)\otimes l_j\\
&&-\sum_{i,j} {r_i}_{1\otimes \partial^{\otimes^2}} a\otimes r_j\otimes {l_i}_{-1\otimes \partial^{\otimes^2}}l_j+\sum_{i,j} {r_i}_{1\otimes \partial\otimes 1+1\otimes 1\otimes \partial} a \otimes {r_j}_{1\otimes 1\otimes \partial}l_i\otimes l_j\\
&=&(1\otimes 1\otimes L_A(a)_{-\partial^{\otimes^3}})(\sum_{i,j}r_i\otimes r_j\otimes {l_i}_{\partial\otimes 1\otimes 1}l_j)-\sum_{i,j}r_i\otimes {r_j}_{1\otimes 1\otimes \partial}(a_{-\partial^{\otimes^2}\otimes 1}l_i)\otimes l_j\\
&&-(R_A(a)_{-\partial^{\otimes^3}}\otimes 1\otimes 1)\sum_{i,j}(r_i\otimes r_j\otimes {l_i}_{\partial\otimes 1\otimes 1}l_j-r_i\otimes {r_j}_{1\otimes 1\otimes \partial} l_i\otimes l_j).
\end{eqnarray*}}
Similarly, we have  \begin{eqnarray*}
(\Delta\otimes I)\Delta(a)&=&(\Delta\otimes I)\sum_i (r_i\otimes a_{-\partial^{\otimes^2}} l_i-{r_i}_{1\otimes \partial} a\otimes l_i)\\
&=&\sum_i(\Delta(r_i)\otimes a_{-\partial^{\otimes^3}}l_i-\Delta({r_i}_{1\otimes \partial} a)\otimes l_i)\\
&=&\sum_{i,j}(r_j\otimes {r_i}_{-\partial^{\otimes^2}\otimes 1}l_j\otimes a_{-\partial^{\otimes^3}}l_i
-{r_j}_{1\otimes \partial\otimes 1}r_i\otimes l_j\otimes a_{-\partial^{\otimes^3}}l_i)\\
&&-\sum_{i,j}(r_j\otimes ({r_i}_{1\otimes 1\otimes \partial}a)_{-\partial^{\otimes^2}\otimes 1}l_j\otimes l_i-{r_j}_{1\otimes \partial\otimes 1}({r_i}_{1\otimes 1\otimes \partial}a)\otimes l_j\otimes l_i)\\
&=& (I\otimes I\otimes L_A(a)_{-\partial^{\otimes^3}})\sum_{i,j} (r_i\otimes {r_j}_{-\partial^{\otimes^2}\otimes 1}l_i\otimes l_j-{r_i}_{1\otimes \partial\otimes 1} r_j\otimes l_i\otimes l_j)\\
&&-\sum_{i,j}r_i\otimes {r_j}_{1\otimes 1\otimes \partial}(a_{-\partial^{\otimes^2}\otimes 1}l_i)\otimes l_j\\
&&+(R_A(a)_{-\partial^{\otimes^3}}\otimes 1\otimes 1)\sum_{i,j}
{r_i}_{1\otimes \partial \otimes 1} r_j\otimes l_i \otimes l_j.
\end{eqnarray*}
Therefore $\Delta$ satisfies (\ref{eq:coass}) if and only if
(\ref{qw1}) holds.
\end{proof}

\begin{thm}\label{tt2}
Let $(A,\cdot_\lambda\cdot)$ be an associative conformal algebra
and $r=\sum_i r_i\otimes l_i\in A\otimes A$. Then the map defined
by (\ref{eQ1}) gives an associative conformal coalgebra
$(A,\Delta)$ such that $(A, \cdot_\lambda \cdot, \Delta)$ is an
ASI conformal bialgebra if and only if (\ref{qw1}) and the
following equation hold:
\begin{eqnarray}\label{thq3}
(L_A(b)_{-\lambda-\partial^{\otimes^2}}\otimes I-I\otimes R_A(b)_{-\lambda-\partial^{\otimes^2}})
(I\otimes L_A(a)_{-\partial^{\otimes^2}}-R_A(a)_{-\partial^{\otimes^2}}\otimes I)(r+\tau r)=0,
\end{eqnarray}
for all $a$, $b\in A$.
\end{thm}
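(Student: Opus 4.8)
The plan is to combine Proposition \ref{ppr1} with the characterization of the ASI condition given in Theorem \ref{thm2}. By Proposition \ref{ppr1}, the map $\Delta$ defined by (\ref{eQ1}) gives an associative conformal coalgebra precisely when (\ref{qw1}) holds, so half of the theorem is already in hand and I only need to analyze when the resulting $(A,\cdot_\lambda\cdot,\Delta)$ is an ASI conformal bialgebra. Since $\Delta$ arises from $r$ via a coboundary formula, I expect that condition (\ref{thq1}) (the ``infinitesimal'' axiom) holds \emph{automatically}, while the antisymmetry condition (\ref{thq2}) is what forces the extra requirement (\ref{thq3}). This would match the analogous phenomenon in the non-conformal theory of \cite{Bai1}.

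First I would verify that (\ref{thq1}) is satisfied identically by any coboundary $\Delta$. Explicitly, I would substitute the formula (\ref{eQ1}) for $\Delta(a_\lambda b)$, $\Delta(a)$, and $\Delta(b)$ into (\ref{thq1}) and expand both sides using conformal sesquilinearity together with the associativity of $\cdot_\lambda\cdot$ (which gives the module relations $L_A(a_\lambda b)=L_A(a)_\lambda L_A(b)$ and the corresponding identity for $R_A$, as well as the commuting relation (\ref{eq1}) between left and right multiplications). The key mechanism is that the two terms on the right-hand side of (\ref{thq1}), namely $(I\otimes L_A(a)_\lambda)\Delta(b)$ and $(R_A(b)_{-\lambda-\partial^{\otimes^2}}\otimes I)\Delta(a)$, reassemble exactly into the coboundary expression for $\Delta(a_\lambda b)$ once the operators $L_A(a)$ and $R_A(b)$ are pushed through $r$; the cross terms cancel by the bimodule compatibility. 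This is the kind of bookkeeping computation that should go through without any constraint on $r$.

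Next I would substitute (\ref{eQ1}) into the antisymmetry condition (\ref{thq2}). Writing out $\Delta(a)$ and $\Delta(b)$ as coboundaries and collecting the four operator blocks $L_A(b)\otimes I$, $I\otimes R_A(b)$, and their $\tau$-partners acting on $L_A(a)\otimes I$ and $I\otimes R_A(a)$, I expect the left-hand side of (\ref{thq2}) to reorganize into the single expression
$$(L_A(b)_{-\lambda-\partial^{\otimes^2}}\otimes I-I\otimes R_A(b)_{-\lambda-\partial^{\otimes^2}})(I\otimes L_A(a)_{-\partial^{\otimes^2}}-R_A(a)_{-\partial^{\otimes^2}}\otimes I)(r+\tau r),$$
which is precisely the left-hand side of (\ref{thq3}). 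The appearance of the symmetrized tensor $r+\tau r$ is the crucial point: the part of $\Delta$ coming from the \emph{antisymmetric} part of $r$ automatically satisfies (\ref{thq2}), so only the symmetric part $r+\tau r$ survives and must be killed by the surrounding operators. Tracking the action of $\tau$ correctly through the two nested operator differences, and keeping the spectral-parameter arguments ($\lambda$ versus $-\partial^{\otimes^2}$) consistent, is where care is needed.

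The main obstacle I anticipate is purely computational rather than conceptual: correctly managing the conformal sesquilinearity substitutions $\lambda=-\partial^{\otimes^2}$ and the shifts in the $\partial$-arguments when operators are moved across tensor factors, together with the interaction of the flip $\tau$ with the left/right multiplication operators (since $\tau$ interchanges the roles of $L_A$ and $R_A$ on the two tensor slots). To keep this tractable I would, as in the proofs of Theorems \ref{t1} and \ref{thm2}, work in a $\mathbb{C}[\partial]$-basis $\{e_1,\dots,e_n\}$ with structure polynomials $P_k^{ij}(\lambda,\partial)$, translate both (\ref{thq2}) and (\ref{thq3}) into identities among these polynomials, and check that the two polynomial identities coincide after the appropriate renaming of spectral variables. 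Once (\ref{thq1}) is seen to be automatic and (\ref{thq2}) is shown equivalent to (\ref{thq3}), the theorem follows by invoking Definition \ref{de1}.
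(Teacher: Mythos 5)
Your proposal is correct and follows essentially the same route as the paper: invoke Proposition \ref{ppr1} for coassociativity, verify that (\ref{thq1}) holds automatically for any coboundary $\Delta$, and show that the two terms of (\ref{thq2}) reassemble into the stated operator applied to $r$ and to $\tau r$ respectively, so that (\ref{thq2}) is equivalent to (\ref{thq3}). The only cosmetic difference is that the paper carries out the $(\ref{thq2})\Leftrightarrow(\ref{thq3})$ computation directly on the tensor components $r_i\otimes l_i$ rather than passing to a $\mathbb{C}[\partial]$-basis with structure polynomials.
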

\begin{proof}
Let $a,b\in A$. We first check that (\ref{thq1})
holds automatically. In fact, we have
\begin{eqnarray*}
&&(I\otimes {L_A(a)}_\lambda)\Delta(b)+({R_A(b)}_{-\lambda-\partial^{\otimes^2}}\otimes I)\Delta(a)\\
&=& (I\otimes {L_A(a)}_\lambda)\sum_i(r_i\otimes b_{-\partial^{\otimes^2}}l_i-{r_i}_{1\otimes \partial} b\otimes l_i)\\
&&+(R_A(b)_{-\lambda-\partial^{\otimes^2}}\otimes I)\sum_i(r_i\otimes a_{-\partial^{\otimes^2}}l_i
-{r_i}_{1\otimes \partial} a\otimes l_i)\\
&=&\sum_i(r_i\otimes a_\lambda (b_{-\partial^{\otimes^2}}l_i)-{r_i}_{\lambda+1\otimes \partial} b\otimes a_\lambda l_i\\
&&+{r_i}_{\lambda+1\otimes \partial}b\otimes a_\lambda l_i-({r_i}_{1\otimes \partial} a)_{\lambda+1\otimes \partial} b\otimes l_i)\\
&=&\sum_i(r_i\otimes a_\lambda (b_{-\partial^{\otimes^2}}l_i)-({r_i}_{1\otimes \partial} a)_{\lambda+1\otimes \partial} b\otimes l_i)\\
&=&\sum_i(r_i\otimes (a_\lambda b)_{-\partial^{\otimes^2}}l_i-{r_i}_{1\otimes \partial} (a_\lambda b)\otimes l_i)\\
&=& \Delta(a_\lambda b).
\end{eqnarray*}
Therefore (\ref{thq1})
holds.

Obviously, we have
\begin{eqnarray*}
&&({L_A(b)}_{-\lambda-\partial^{\otimes^2}}\otimes I-I\otimes R_A(b)_{-\lambda-\partial^{\otimes^2}})\Delta(a)\\
&&=({L_A(b)}_{-\lambda-\partial^{\otimes^2}}\otimes I-I\otimes
R_A(b)_{-\lambda-\partial^{\otimes^2}}) (I\otimes
L_A(a)_{-\partial^{\otimes^2}}-R_A(a)_{-\partial^{\otimes^2}}\otimes
I)r.
\end{eqnarray*}
Moreover, we have
\begin{eqnarray*}
&&\tau({L_A(a)}_\lambda \otimes I-I\otimes {R_A(a)}_\lambda)\Delta(b)\\
&=&\tau ({L_A(a)}_\lambda \otimes I-I\otimes
{R_A(a)}_\lambda)\sum_i(r_i\otimes b_{-\partial^{\otimes^2}}l_i
-{r_i}_{1\otimes \partial}b \otimes l_i)\\
&=&\tau \sum_i(a_\lambda r_i\otimes
b_{-\lambda-\partial^{\otimes^2}}l_i-a_\lambda ({r_i}_{1\otimes
\partial}b)\otimes l_i-r_i\otimes
(b_{-\partial^{\otimes^2}}l_i)_{-\lambda-1\otimes \partial}a
+{r_i}_{\lambda+1\otimes \partial}b\otimes{l_i}_{-\lambda-1\otimes \partial}a\\
&=&\sum_i( b_{-\lambda-\partial^{\otimes^2}}l_i\otimes a_\lambda
r_i-l_i\otimes a_\lambda ({r_i}_{\partial\otimes
1}b)-(b_{-\partial^{\otimes^2}}l_i)_{-\lambda-\partial\otimes
1}a\otimes r_i
+{l_i}_{-\lambda-\partial\otimes 1}a\otimes {r_i}_{\lambda+\partial\otimes 1}b\\
&=&\sum_i( b_{-\lambda-\partial^{\otimes^2}}l_i\otimes a_\lambda
r_i-l_i\otimes (a_\lambda r_i) _{\lambda+\partial\otimes
1}b)-b_{-\lambda-\partial^{\otimes^2}}({l_i}_{1\otimes \partial}a)
+{l_i}_{-\lambda-\partial\otimes 1}a\otimes {r_i}_{\lambda+\partial\otimes 1}b\\
&=&({L_A(b)}_{-\lambda-\partial^{\otimes^2}}\otimes I-I\otimes
R_A(b)_{-\lambda-\partial^{\otimes^2}})(I\otimes
L_A(a)_{-\partial^{\otimes^2}}-R_A(a)_{-\partial^{\otimes^2}}\otimes
I)(\tau r).
\end{eqnarray*}
Therefore  (\ref{thq2}) holds if and only if (\ref{thq3}) holds.
Hence by Proposition \ref{ppr1}, the conclusion holds.
\end{proof}

\begin{cor}
Let $A$ be an associative conformal algebra and $r\in A\otimes A$. Suppose that $r$ is antisymmetric. Then
the map defined by (\ref{eQ1}) gives an associative conformal coalgebra $(A,\Delta)$ such that $(A, \cdot_\lambda \cdot, \Delta)$ is an ASI conformal bialgebra if
\begin{eqnarray}\label{qw2}
r\bullet r\equiv 0~~\text{mod}~~(\partial^{\otimes^3}).
\end{eqnarray}
\end{cor}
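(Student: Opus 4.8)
The plan is to read the statement as a corollary of Theorem~\ref{tt2}: since that theorem characterizes exactly when the map (\ref{eQ1}) produces an ASI conformal bialgebra by the pair of conditions (\ref{qw1}) and (\ref{thq3}), it suffices to verify that both of these follow from the two hypotheses at hand, namely that $r$ is antisymmetric and that (\ref{qw2}) holds.

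First I would dispatch (\ref{thq3}). Antisymmetry of $r$ means $\tau r=-r$, so $r+\tau r=0$. The left-hand side of (\ref{thq3}) is a ($\lambda$- and $a,b$-dependent) operator applied to the single vector $r+\tau r$, hence it vanishes identically. Thus (\ref{thq3}) is automatic, and the antisymmetry hypothesis is used for nothing else.

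The real content is to deduce (\ref{qw1}) from (\ref{qw2}). I interpret (\ref{qw2}) as the assertion that $r\bullet r$ lies in $\partial^{\otimes^3}(A\otimes A\otimes A)$, i.e. $r\bullet r=\partial^{\otimes^3}w$ for some $w\in A\otimes A\otimes A$. The key step is then a self-contained lemma: both operators $I\otimes I\otimes L_A(a)_{-\partial^{\otimes^3}}$ and $R_A(a)_{-\partial^{\otimes^3}}\otimes I\otimes I$ annihilate every element of $\partial^{\otimes^3}(A\otimes A\otimes A)$, so their difference---which is precisely the operator in (\ref{qw1})---kills $r\bullet r$. To prove the lemma for the first operator I would take $w=u\otimes v\otimes t$, expand $\partial^{\otimes^3}w$ by the Leibniz rule, apply $I\otimes I\otimes L_A(a)_\lambda$, and use the conformal sesquilinearity $a_\lambda(\partial t)=(\partial+\lambda)(a_\lambda t)$ in the third slot; the three resulting terms reassemble as $(\partial^{\otimes^3}+\lambda)$ applied to $u\otimes v\otimes(a_\lambda t)$. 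Specializing $\lambda=-\partial^{\otimes^3}$ turns the factor $\partial^{\otimes^3}+\lambda$ into the zero operator $\partial^{\otimes^3}-\partial^{\otimes^3}$, so the image is $0$. The argument for $R_A(a)_{-\partial^{\otimes^3}}\otimes I\otimes I$ is the mirror image, using instead $R_A(a)_\lambda(\partial u)=(\partial+\lambda)(R_A(a)_\lambda u)$ in the first slot; linearity then extends both statements from simple tensors to all of $\partial^{\otimes^3}(A\otimes A\otimes A)$, which gives (\ref{qw1}).

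With (\ref{qw1}) and (\ref{thq3}) in hand, Theorem~\ref{tt2} immediately yields that (\ref{eQ1}) defines an associative conformal coalgebra $(A,\Delta)$ with $(A,\cdot_\lambda\cdot,\Delta)$ an ASI conformal bialgebra, which is the claim. I expect the only delicate point to be the bookkeeping inside the substitution $\lambda=-\partial^{\otimes^3}$: one must confirm that the $\lambda$ appearing explicitly in $\partial^{\otimes^3}+\lambda$ and the $\lambda$ hidden inside $a_\lambda t$ (respectively $R_A(a)_\lambda u$) are specialized simultaneously, and that the two occurrences commute because they are both polynomials in the single total-derivative operator $\partial^{\otimes^3}$. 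Once the sesquilinearity identities are applied correctly this is routine, and note that no finiteness or freeness of $A$ is required.
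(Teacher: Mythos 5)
Your proposal is correct and follows essentially the same route as the paper: the paper's own proof simply notes that (\ref{qw1}) holds ``by conformal sesquilinearity'' once $r\bullet r\equiv 0 \bmod (\partial^{\otimes^3})$ and then invokes Theorem~\ref{tt2}, with the vanishing of (\ref{thq3}) via $r+\tau r=0$ left implicit. You have merely written out the sesquilinearity computation that the paper compresses into one phrase.
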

\begin{proof}
If $r\bullet r\equiv 0~~\text{mod}~~(\partial^{\otimes^3})$, (\ref{qw1}) naturally holds by conformal sesquilinearity. Then this conclusion directly follows from Theorem \ref{tt2}.
\end{proof}

\begin{defi}{\rm Let $A$ be an associative conformal algebra and $r\in A\otimes
A$. (\ref{qw2}) is called {\bf associative conformal Yang-Baxter
equation} in $A$.}
\end{defi}

\begin{rmk}{\rm In fact, the associative conformal Yang-Baxter
equation (\ref{qw2}) is regarded as a conformal analogue of the
associative Yang-Baxter equation in an associative algebra
(\cite{Bai1}).}
\end{rmk}

\begin{defi}
{\rm Let $(A,\cdot_\lambda\cdot, \Delta_A)$ and $(B,\cdot_\lambda\cdot, \Delta_{B})$ be two ASI conformal bialgebras. If $\varphi: A\rightarrow B$ is a homomorphism of associative conformal algebras satisfying
\begin{eqnarray}
(\varphi\otimes
\varphi)\Delta_A(a)=\Delta_B(\varphi(a)),~~\forall\ a\in A,
\end{eqnarray}
then $\varphi$ is called {\bf a homomorphism of ASI conformal bialgebras}.}
\end{defi}


\begin{thm}\label{double-1}
Let $(A,\cdot_\lambda\cdot, \Delta_A)$ be a finite ASI conformal
bialgebra where $A$ is free as a $\mathbb{C}[\partial]$-module.
Then there is a canonical ASI conformal bialgebra structure on the
$\mathbb{C}[\partial]$-module $A\oplus A^{\ast c}$ such that the
inclusions $i_1: A\rightarrow A\oplus A^{\ast c}$ and $i_2:
A^{\ast c}\rightarrow A\oplus A^{\ast c}$ are homomorphisms of ASI
conformal bialgebras. Here the ASI conformal bialgebra structure
on $A^{\ast c}$ is $(A^{\ast c}, \cdot_\lambda \cdot,
-\Delta_{A^{\ast c}})$, where $\cdot_\lambda\cdot$ and
$\Delta_{A^{\ast c}}$ on $A^{\ast c}$ are defined by (\ref{eq3})
and (\ref{eq4}) respectively.
\end{thm}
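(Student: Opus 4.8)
The plan is to realize the desired structure on $D := A \oplus A^{\ast c}$ as a \emph{coboundary} ASI conformal bialgebra, obtained from the canonical antisymmetric $r$-matrix built out of the pairing between $A$ and $A^{\ast c}$. First, since $(A,\cdot_\lambda\cdot,\Delta_A)$ is an ASI conformal bialgebra, Corollary \ref{cc1} shows that $(A, A^{\ast c}, R_A^\ast, L_A^\ast, R_{A^{\ast c}}^\ast, L_{A^{\ast c}}^\ast)$ is a matched pair, so by Theorem \ref{thm1} the $\mathbb{C}[\partial]$-module $A\oplus A^{\ast c}$ carries the associative conformal algebra structure $A\bowtie A^{\ast c}$ given by (\ref{ass-matched pair}); moreover by Proposition \ref{cc2} the dual $(A^{\ast c},\cdot_\lambda\cdot,\Delta_{A^{\ast c}})$ is itself an ASI conformal bialgebra. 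It is on this fixed associative conformal algebra $D$ that I would install the coproduct.

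Fix a $\mathbb{C}[\partial]$-basis $\{e_1,\dots,e_n\}$ of $A$ and the dual basis $\{e_1^\ast,\dots,e_n^\ast\}$ of $A^{\ast c}$ (with $(e_j^\ast)_\lambda e_i=\delta_{ij}$), and set
\[ r = \sum_{i=1}^n \bigl( e_i \otimes e_i^\ast - e_i^\ast \otimes e_i \bigr) \in D \otimes D. \]
By construction $\tau r = -r$, so $r$ is antisymmetric. The central claim is that $r$ solves the associative conformal Yang--Baxter equation (\ref{qw2}) in $D$, that is $r\bullet r \equiv 0 \bmod (\partial^{\otimes^3})$. Granting this, the Corollary following Theorem \ref{tt2} at once produces an associative conformal coalgebra structure $\Delta_D$ on $D$, defined by (\ref{eQ1}), making $(D,\cdot_\lambda\cdot,\Delta_D)$ an ASI conformal bialgebra.

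It then remains to identify $\Delta_D$ on the two summands. Writing $L_D,R_D$ for the left and right multiplication operators of $D$, I would expand $\Delta_D(a) = (I\otimes L_D(a)_\lambda - R_D(a)_\lambda\otimes I)\,r|_{\lambda=-\partial^{\otimes^2}}$ for $a\in A$ using the explicit product (\ref{ass-matched pair}) together with the duality $(e_j^\ast)_\lambda e_i=\delta_{ij}$, and then collect terms by their $A$/$A^{\ast c}$ bidegree. I expect the $A\otimes A^{\ast c}$ and $A^{\ast c}\otimes A$ contributions to cancel in pairs, while the $A\otimes A$ part reassembles, via the description of $L_{A^{\ast c}}^\ast$ in terms of the structure constants of $\Delta_A$ (as computed in the proofs of Theorems \ref{t1} and \ref{thm2}), into exactly $\Delta_A(a)$. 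The analogous computation for $f\in A^{\ast c}$ should yield $\Delta_D(f)=-\Delta_{A^{\ast c}}(f)$, the sign being precisely the effect of the antisymmetrizing summand $-\sum_i e_i^\ast\otimes e_i = -\tau\bigl(\sum_i e_i\otimes e_i^\ast\bigr)$ in $r$. Since the inclusions $i_1,i_2$ are subalgebra embeddings, hence homomorphisms of associative conformal algebras, and since these restriction identities are exactly the conditions $(i_1\otimes i_1)\Delta_A=\Delta_D\circ i_1$ and $(i_2\otimes i_2)(-\Delta_{A^{\ast c}})=\Delta_D\circ i_2$, the maps $i_1$ and $i_2$ are homomorphisms of ASI conformal bialgebras, as required.

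The hard part will be the verification that $r\bullet r \equiv 0 \bmod (\partial^{\otimes^3})$ in $D$. This is a lengthy computation: expanding $r\bullet r$ by the definition of $\bullet$ and the matched-pair product (\ref{ass-matched pair}), one must track the three conformal variables $\partial\otimes 1\otimes 1$, $1\otimes\partial\otimes 1$, $1\otimes 1\otimes\partial$ and use the pairing $(e_j^\ast)_\lambda e_i=\delta_{ij}$ to collapse the double sums over the dual bases. After reduction modulo $(\partial^{\otimes^3})$, the surviving terms should organize precisely into the matched-pair relations (\ref{es1})--(\ref{es6}), equivalently the ASI compatibility conditions (\ref{thq1})--(\ref{thq2}) for $(A,\cdot_\lambda\cdot,\Delta_A)$, which hold by hypothesis. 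Bookkeeping the conformal variables through the flips and through the substitution $\lambda=-\partial^{\otimes^2}$ is where essentially all of the care is needed.
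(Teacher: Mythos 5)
Your overall architecture (build the associative conformal algebra $A\bowtie A^{\ast c}$ from the matched pair, install a coboundary coproduct from a canonical element of $(A\oplus A^{\ast c})^{\otimes 2}$, then identify its restrictions to the two summands) matches the paper's, but your choice of $r$ breaks the argument at its central step. You take the antisymmetrized element $r=\sum_i(e_i\otimes e_i^\ast-e_i^\ast\otimes e_i)$ so as to invoke the corollary following Theorem \ref{tt2}, and you assert that this $r$ solves the associative conformal Yang--Baxter equation (\ref{qw2}) in the double. That is false in general: the Yang--Baxter expression is quadratic in $r$, and antisymmetrizing a solution whose symmetric part is merely invariant does not produce another solution. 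The antisymmetrized element is a solution in the \emph{semi-direct product} $A\ltimes_{r_A^\ast,l_A^\ast}M^{\ast c}$ exactly when the corresponding map is an $\mathcal O$-operator (Theorem \ref{thh2}); here the relevant map is the identity on the regular bimodule, which is an $\mathcal O$-operator only when the product on $A$ vanishes. Concretely, take $A=\mathrm{Cur}(\mathbb{C}e)$ with $e_\lambda e=e$ and $\Delta_A=0$; this is an ASI conformal bialgebra, the double has $e_\lambda e^\ast={e^\ast}_\lambda e=e^\ast$ and ${e^\ast}_\lambda e^\ast=0$, and for $\tilde r=e\otimes e^\ast-e^\ast\otimes e$ one computes $\tilde r\bullet\tilde r=-e\otimes e^\ast\otimes e^\ast-e^\ast\otimes e^\ast\otimes e-e^\ast\otimes e\otimes e^\ast$, which is a nonzero constant-coefficient tensor and hence not $0$ modulo $(\partial^{\otimes^3})$, whereas $r=e\otimes e^\ast$ does satisfy $r\bullet r\equiv 0$.

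The correct route (and the paper's) is to use the non-antisymmetric element $r=\sum_ie_i\otimes e_i^\ast$ and apply Theorem \ref{tt2} itself rather than its antisymmetric corollary: one verifies separately that $r\bullet r\equiv 0\bmod(\partial^{\otimes^3})$ and that $(I\otimes L(a)_{-\partial^{\otimes^2}}-R(a)_{-\partial^{\otimes^2}}\otimes I)(r+\tau r)=0$ for all $a$ in the double, both by direct computation with the matched-pair structure constants. Note also a second symptom of the same wrong choice: since the symmetric part $r+\tau r$ is invariant in the above sense, your $\tilde r=r-\tau r$ would induce $\Delta_{\tilde r}=\Delta_r-\Delta_{\tau r}=2\Delta_r$, so even if $\tilde r$ were a solution your claimed restriction identities would come out as $2\Delta_A$ and $-2\Delta_{A^{\ast c}}$ rather than $\Delta_A$ and $-\Delta_{A^{\ast c}}$. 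Your final step (reading off the restrictions of $\Delta_D$ on basis elements and concluding that $i_1,i_2$ are bialgebra homomorphisms) is fine once $r$ is chosen correctly.
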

\begin{proof}
Let $\{e_1,\cdots,e_n\}$ be a $\mathbb{C}[\partial]$-basis of $A$ and $\{e_1^\ast,\cdots,e_n^\ast\}$ be the dual basis in $A^{\ast c}$. Since $(A,\cdot_\lambda\cdot, \Delta_A)$ is an ASI conformal bialgebra, $(A,A^{\ast c}, R_A^\ast, L_A^\ast, R_{A^{\ast c}}^\ast, L_{A^{\ast c}}^\ast)$ is a matched pair of associative conformal algebras by Corollary \ref{cc1}, where the associative conformal algebra structure on $A^{\ast c}$ is obtained from $\Delta_A$.
Then by Proposition \ref{pro1}, there is an associative conformal algebra structure on the $\mathbb{C}[\partial]$-module $A\oplus A^{\ast c}$ associated with such a matched pair.
Set
\begin{eqnarray}
{e_i}_\lambda e_j=\sum_kP_k^{ij}(\lambda,\partial)e_k,~~~~\Delta_A(e_j)=\sum_{i,k}R_j^{ik}(\partial\otimes 1,-\partial\otimes1-1\otimes \partial)e_i\otimes e_k,
\end{eqnarray}
where $P_k^{ij}(\lambda,\partial)$, $R_j^{ik}(\lambda,\partial)\in
\mathbb{C}[\lambda,\partial]$. Then by Proposition \ref{pro1}
again, we have
\begin{eqnarray*}
{e_i^\ast}_\lambda e_j^\ast
&=&\sum_kR_k^{ij}(\lambda,\partial)e_k^\ast,\\
{e_i}_\lambda e_j^\ast&=&R_A^\ast(e_i)_\lambda e_j^\ast+L_{A^{\ast
c}}^\ast(e_j^\ast)_{-\lambda-\partial}e_i =\sum_k
P_j^{ki}(\partial,-\lambda-\partial)e_k^\ast+\sum_k
R_i^{jk}(-\lambda-\partial,\lambda)e_k,\\
{e_i^\ast}_\lambda e_j&=&R_{A^{\ast c}}^\ast(e_i^\ast)_\lambda
e_j+L_{A^{\ast c}}^\ast(e_j)_{-\lambda-\partial}e_i^\ast =\sum_k
R_j^{ki}(\partial,-\lambda-\partial)e_k+\sum_k
P_i^{jk}(-\lambda-\partial,\lambda)e_k^\ast.
\end{eqnarray*}

Let $r=\sum_{i=1}^n e_i\otimes e_i^\ast\in (A\oplus A^{\ast
c})\otimes (A\oplus A^{\ast c})$ and define
$$\Delta_{A\oplus A^{\ast c}}(a)=(I\otimes L_A(a)_\lambda-R_A(a)_\lambda\otimes I)r|_{\lambda=-\partial^{\otimes^2}},\;\;\forall\ a\in A\oplus A^{\ast c}.$$
Note that
\begin{eqnarray*}
r\bullet r&=&\sum_{i,j}(e_i\otimes e_j\otimes {e_i^\ast}_\mu e_j^\ast|_{\mu=\partial\otimes 1\otimes 1}\\
&&-e_i\otimes {e_j}_\mu e_i^\ast\otimes
e_j^\ast|_{\mu=-\partial^{\otimes^2}\otimes 1}
+{e_i}_\mu e_j\otimes e_i^\ast\otimes e_j^\ast|_{\mu=1\otimes \partial\otimes 1})\\
&=&\sum_{i,j,k}(R_k^{ij}(\partial\otimes 1\otimes 1,1\otimes 1\otimes \partial)  e_i\otimes e_j\otimes e_k^\ast-P_i^{kj}(1\otimes \partial\otimes 1, \partial\otimes 1\otimes 1)e_i\otimes e_k^\ast\otimes e_j^\ast\\
&&-R_j^{ik}(\partial\otimes 1\otimes 1,-\partial^{\otimes^2}\otimes 1)e_i\otimes e_k\otimes e_j^\ast+P_k^{ij}(1\otimes \partial\otimes 1,\partial\otimes 1\otimes 1)e_k\otimes e_i^\ast\otimes e_j^\ast)\\
&=&\sum_{i,j,k}(R_k^{ij}(\partial\otimes 1\otimes 1,1\otimes 1\otimes \partial)-R_k^{ij}(\partial\otimes 1\otimes 1,-\partial^{\otimes^2}\otimes 1))e_i\otimes e_i\otimes e_k^\ast\\
&\equiv& 0 ~~~\text{mod}~~~\partial^{\otimes^3}.
\end{eqnarray*}
Then (\ref{qw1}) holds by conformal sesquilinearity. Moreover, for
all $e_i\in A$, we have
\begin{eqnarray*}
&&(I\otimes L_A(e_i)_{-\partial^{\otimes^2}}-R_A(e_i)_{-\partial^{\otimes^2}}\otimes I)(r+\tau r)\\
&=&\sum_j(I\otimes L_A(e_i)_{-\partial^{\otimes^2}}-R_A(e_i)_{-\partial^{\otimes^2}}\otimes I)(e_j\otimes e_j^\ast+e_j^\ast\otimes e_j)\\
&=&\sum_j(e_j\otimes
{e_i}_{-\partial^{\otimes^2}}e_j^\ast+e_j^\ast\otimes{e_i}_{-\partial^{\otimes^2}}
e_j
-{e_j}_{1\otimes \partial}e_i\otimes e_j^\ast-{e_j^\ast}_{1\otimes \partial}e_i\otimes e_j)\\
&=&\sum_{j,k}(P_j^{ki}(1\otimes \partial,\partial\otimes
1)e_j\otimes e_k^\ast
+R_i^{jk}(\partial\otimes 1,-\partial^{\otimes^2})e_j\otimes e_k+P_k^{ij}(-\partial^{\otimes^2},1\otimes \partial)e_j^\ast\otimes e_k\\
&&-P_k^{ij}(1\otimes \partial,\partial\otimes 1)e_k\otimes
e_j^\ast-R_i^{kj}(\partial\otimes
1,-\partial^{\otimes^2})e_k\otimes
e_j-P_j^{ik}(-\partial^{\otimes^2},1\otimes
\partial)e_k^\ast\otimes e_j)=0.
\end{eqnarray*}
Similarly, for all $e_i^*\in A^{\ast c}$, we have
$$(I\otimes
L_A(e_i^\ast)_{-\partial^{\otimes^2}}-R_A(e_i^\ast)_{-\partial^{\otimes^2}}\otimes
I)(r+\tau r)=0.$$ Therefore (\ref{thq3}) holds. Hence by
Theorem~\ref{tt2}, $\Delta_{A\oplus A^{\ast c}}$ gives an ASI
conformal bialgebra structure on $A\oplus A^{\ast c}$.

Furthermore, for all $e_i\in A$, we have
\begin{eqnarray*}
\Delta_{A\oplus A^{\ast c}}(e_i)&=&\sum_{j=1}^n(I\otimes L_{A\oplus A^{\ast c}}(e_i)_\lambda-R_{A\oplus A^{\ast c}}(e_i)_\lambda\otimes I)(e_j\otimes e_j^\ast)|_{\lambda=-\partial^{\otimes^2}}\\
&=&\sum_{j=1}^n(e_j\otimes {e_i}_{-\partial^{\otimes^2}} e_j^\ast-{e_j}_{1\otimes \partial} e_i \otimes e_j^\ast)\\
&=&\sum_{j=1}^n(e_j\otimes (\sum_k P_j^{ki}(1\otimes \partial,\partial\otimes 1)e_k^\ast
+\sum_kR_i^{jk}(\partial\otimes 1,-\partial^{\otimes^2})e_k)\\
&&-\sum_k P_k^{ji}(1\otimes \partial, \partial\otimes 1)e_k\otimes e_j^\ast)\\
&=&\sum_{j=1}^n\sum_k R_i^{jk}(\partial\otimes
1,-\partial^{\otimes^2}) e_j\otimes e_k =\Delta_{A}(e_i).
\end{eqnarray*}
Similarly, for all $e_i^*\in A^{\ast c}$, we have
$$\Delta_{A\oplus A^{\ast c}}(e_i^\ast)=-\Delta_{A^{\ast
c}}(e_i^\ast).$$ Therefore $i_1: A\rightarrow A\oplus A^{\ast c}$
and $i_2: A^{\ast c}\rightarrow A\oplus A^{\ast c}$ are
homomorphisms of ASI conformal bialgebras. Hence the proof is
finished.
\end{proof}

\section{$\mathcal O$-operators of associative conformal algebras and dendriform conformal algebras}
We introduce the notion of $\mathcal O$-operators of associative conformal algebras to interpret the associative conformal Yang-Baxter equation.
In particular, an $\mathcal O$-operator of an associative conformal algebra $A$ associated to a bimodule gives an antisymmetric solution of associative conformal
Yang-Baxter equation in a semi-direct product associative conformal algebra. We also introduce the notion of dendriform conformal algebras to construct $\mathcal O$-operators of
their associated associative conformal algebras and hence give (antisymmetric) solutions of associative conformal Yang-Baxter equation.

Let $A$ be a finite associative conformal algebra which is free as
a $\mathbb{C}[\partial]$-module. Define a linear map $\varphi:
A\otimes A\rightarrow Chom(A^{\ast c}, A)$ as
\begin{eqnarray}
\varphi(u\otimes
v)_\lambda(g)=g_{-\lambda-\partial^A}(u)v,\;\;\forall\ u, v\in
A,g\in A^{\ast c}.
\end{eqnarray}
Here $\partial^A$ represents the action of $\partial$ on $A$.
Obviously,  $\varphi$ is a $\mathbb{C}[\partial]$-module
homomorphism. Similar to Proposition 6.1 in \cite{BKL}, we show
that $\varphi$ is a $\mathbb{C}[\partial]$-module isomorphism.


Set $r=\sum_i r_i\otimes l_i\in A\otimes A$. By $\varphi$, we
associate a conformal linear map $T^r\in Chom(A^{\ast c},A)$ given
by
\begin{eqnarray*}
T^r_\lambda(f)=\sum_if_{-\lambda-\partial^A}(r_i)l_i,\;\;\forall\
f\in A^{\ast c}.
\end{eqnarray*}

For all $f\in A^{\ast c}$ and $a\in A$, we define $\langle a, f\rangle_\lambda=\{f,a\}_{-\lambda}=f_{-\lambda}(a)$. Obviously,
\begin{eqnarray}\label{kh1}
\langle \partial a, f\rangle_\lambda=-\lambda\langle a, f\rangle_\lambda.
\end{eqnarray}
We also define
$$\langle a\otimes b\otimes c, f\otimes g\otimes h\rangle_{(\lambda, \nu,\theta)}=\langle a,f\rangle_\lambda \langle b,g\rangle_\nu\langle c,h\rangle_\theta,
\;\;\forall\ a,b,c\in A, f,g,h\in A^{\ast c}.
$$

By Proposition \ref{pr1}, we have
\begin{eqnarray}\label{x2}
\langle a_\lambda b, f\rangle_\mu=\langle b, L_A^\ast(a)_\lambda
f\rangle_{\mu-\lambda},~~~ \langle b_{\mu-\lambda}a,
f\rangle_\mu=\langle b, R_A^\ast(a)_\lambda
f\rangle_{\mu-\lambda},\;\;\forall a,b\in A, f\in A^{\ast c}.
\end{eqnarray}

\begin{thm}\label{th1}
Let $A$ be a finite associative conformal algebra which is free as
a $\mathbb{C}[\partial]$-module and $r\in A\otimes A$ be
antisymmetric. Then $r$ is a solution of associative conformal
Yang-Baxter equation if and only if $T^r\in Chom(A^{\ast c},A)$
satisfies
\begin{eqnarray}\label{x4}
T^r_{0}(f)_{\lambda}
T^r_{0}(g)-T_{0}^r(R_A^\ast(T_0^r(f))_{\lambda}g)-T^r_{0}(L_A^\ast(T^r_0(g))_{-\lambda-\partial}
f)=0,~~~~~\forall\ f,~g\in A^{\ast c}.
\end{eqnarray}
\end{thm}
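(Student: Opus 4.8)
The plan is to transport the tensor identity (\ref{qw2}) to the operator identity (\ref{x4}) through the non-degenerate conformal pairing $\langle\cdot,\cdot\rangle_\lambda$ between $A$ and $A^{\ast c}$. Since $A$ is finite and free as a $\mathbb{C}[\partial]$-module, this pairing is non-degenerate, and so is the induced triple pairing $\langle\cdot,\cdot\rangle_{(\lambda,\nu,\theta)}$ between $A\otimes A\otimes A$ and $A^{\ast c}\otimes A^{\ast c}\otimes A^{\ast c}$. The first observation is how the congruence ``$\mathrm{mod}\ (\partial^{\otimes^3})$'' reads after pairing: by (\ref{kh1}), for any $s\in A\otimes A\otimes A$ one has $\langle \partial^{\otimes^3}s,\ f\otimes g\otimes h\rangle_{(\lambda,\nu,\theta)}=-(\lambda+\nu+\theta)\langle s,\ f\otimes g\otimes h\rangle_{(\lambda,\nu,\theta)}$. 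Hence, using non-degeneracy, $r\bullet r\equiv 0\ \mathrm{mod}\ (\partial^{\otimes^3})$ holds if and only if, for all $f,g,h\in A^{\ast c}$, the polynomial $\langle r\bullet r,\ f\otimes g\otimes h\rangle_{(\lambda,\nu,\theta)}$ is divisible by $\lambda+\nu+\theta$, i.e. vanishes after the substitution $\theta=-\lambda-\nu$.

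Next I would pair the three summands in the definition of $r\bullet r$ slotwise against $f\otimes g\otimes h$. The $\partial$-substitutions $\mu=\partial\otimes 1\otimes 1$, $\mu=-\partial^{\otimes^2}\otimes 1$ and $\mu=1\otimes\partial\otimes 1$ are absorbed by the conformal sesquilinearity of the functionals: a $\partial$ placed on the leg carrying $r_i$ and paired with $f$ at $\lambda$ contributes a factor $-\lambda$ by (\ref{kh1}), and similarly for the other legs, so each internal product parameter $\mu$ is replaced by the appropriate spectral parameter. Using the definition $T^r_0(f)=\sum_i f_{-\partial}(r_i)l_i$ together with the adjunction formulas (\ref{x2}), I expect the summand $\sum_{i,j}r_i\otimes r_j\otimes({l_i}_\mu l_j)$ — whose product sits in the leg paired with $h$ — to collapse into the pairing of $T^r_0(f)_\lambda T^r_0(g)$ with $h$, while the two remaining summands, whose products sit in the legs paired with $f$ and with $g$, produce the two correction terms $-T^r_0(R_A^\ast(T^r_0(f))_\lambda g)$ and $-T^r_0(L_A^\ast(T^r_0(g))_{-\lambda-\partial}f)$ once the two relations in (\ref{x2}) are used to move the inner products onto the functionals. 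The constraint $\theta=-\lambda-\nu$ is precisely what sets the internal spectral parameter of each copy of $T^r$ to zero, which explains why $T^r_0=T^r_\lambda|_{\lambda=0}$, and not $T^r_\lambda$, appears in (\ref{x4}). Finally, non-degeneracy of $\langle\cdot,\cdot\rangle_\theta$ in the leg carrying $h$ lets me strip $h$ off, so that vanishing on $\lambda+\nu+\theta=0$ becomes exactly the vanishing in $A[\lambda]$ of the left-hand side of (\ref{x4}).

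The antisymmetry of $r$ enters when aligning the directions of the operators $T^r$: the construction $T^r_\lambda(f)=\sum_i f_{-\lambda-\partial}(r_i)l_i$ feeds a functional into the first leg $r_i$ and returns the second leg $l_i$, whereas in certain summands of $r\bullet r$ a leg gets contracted against the ``wrong'' functional and produces instead the transpose map $\sum_i f_{-\lambda-\partial}(l_i)r_i$. Rewriting $\sum_i r_i\otimes l_i=-\tau\bigl(\sum_i r_i\otimes l_i\bigr)$ in the relevant slot, with the $\partial$-substitution transported accordingly by conformal sesquilinearity, converts each such transpose into $-T^r_0$ with a matching parameter shift, so that the whole expression is written through $T^r_0$ alone. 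I expect the main obstacle to be exactly this bookkeeping of spectral parameters: one must keep the associativity relation $({e_i}_\lambda e_l)_{\lambda+\mu}e_r={e_i}_\lambda({e_l}_\mu e_r)$ at hand to rearrange the nested products, evaluate each of the three $\partial$-substitutions on the correct leg, and check that the identifications of $\lambda,\nu,\theta$ with the single free parameter of (\ref{x4}) (subject to $\theta=-\lambda-\nu$) are mutually consistent across all three summands. Beyond this bookkeeping there is no conceptual difficulty: the adjunction formulas (\ref{x2}), the antisymmetry of $r$, and the non-degeneracy of the pairing do all the structural work.
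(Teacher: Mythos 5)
Your proposal follows essentially the same route as the paper's proof: pair $r\bullet r$ slotwise against $f\otimes g\otimes h$, translate the congruence modulo $\partial^{\otimes^3}$ into vanishing on $\lambda+\eta+\nu=0$, absorb the $\partial$-substitutions into the spectral parameters by conformal sesquilinearity, and use the antisymmetry of $r$ together with the adjunction formulas (\ref{x2}) to rewrite everything through $T^r_0$ before stripping off $h$ by non-degeneracy. The plan is correct; the only minor imprecision is that the copy of $T^r$ attached to $f$ in the first summand already lands at parameter $0$ by sesquilinearity alone, whereas it is only the copies appearing at $\lambda+\eta+\nu$ that are forced to $0$ by the divisibility constraint.
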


\begin{proof}
Since $r$ is antisymmetric, we have $$T^r_\lambda(f)=\sum_i\langle
r_i, f\rangle_{\lambda+\partial^A}l_i=-\sum_i\langle l_i,
f\rangle_{\lambda+\partial^A}r_i,\; \;\forall\ f\in A^{\ast c}.$$
Obviously, the fact that $r\bullet r
~~\text{mod}~~(\partial^{\otimes^3}) = 0$ holds if and only if the
following equation holds:
$$\langle r\bullet r ~~\text{mod}~~(\partial^{\otimes^3}), f\otimes g\otimes h\rangle_{(\lambda,\eta,\nu)}=0,\;\;\forall\  f,g,h\in A^{\ast c},$$
if and only if the following equation holds:
\begin{eqnarray}\label{x1}
\langle r\bullet r, f\otimes g\otimes
h\rangle_{(\lambda,\eta,\nu)}=0 ~~~\text{mod}~~(\lambda+\eta+\nu),
\;\;\forall\ f,g,h\in A^{\ast c}.
\end{eqnarray}
Let $f$, $g$, $h\in A^{\ast c}$. Then we have
\begin{eqnarray*}
&&\langle \sum_{i,j}r_i\otimes r_j\otimes {l_i}_\mu l_j|_{\mu=\partial\otimes 1\otimes 1}, f\otimes g\otimes h \rangle_{(\lambda,\eta,\nu)}\\
&=&\sum_{i,j}\langle r_i, f\rangle_\lambda \langle r_j, g\rangle_\eta \langle {l_i}_{-\lambda}l_j, h \rangle_\nu=\langle (\sum_i\langle r_i, f\rangle_\lambda{l_i})_{-\lambda}(\sum_j\langle r_j, g\rangle_\eta l_j), h \rangle_\nu\\
&=&\langle T^r_{\lambda-\partial}(f)_{-\lambda} T^r_{\eta-\partial}(g),  h \rangle_\nu=\langle T^r_{0}(f)_{-\lambda} T^r_{\lambda+\eta+\nu}(g),  h \rangle_\nu,
\end{eqnarray*}
and
\begin{eqnarray*}
&&\langle \sum_{i,j}r_i\otimes {r_j}_\mu l_i\otimes l_j|_{\mu=-\partial^{\otimes^2}\otimes 1}, f\otimes g\otimes h \rangle_{(\lambda,\eta,\nu)}\\
&=&\sum_{i,j}\langle r_i,f\rangle_\lambda \langle{r_j}_{\lambda+\eta} l_i, g\rangle_\eta\langle l_j, h\rangle_\nu=\sum_{i,j}\langle{r_j}_{\lambda+\eta} (\langle r_i,f\rangle_\lambda l_i), g\rangle_\eta\langle l_j, h\rangle_\nu\\
&=&\sum_j\langle{r_j}_{\lambda+\eta} (T_{\lambda-\partial}^r(f)), g \rangle_\eta\langle l_j, h\rangle_\nu=\sum_j \langle{r_j}, R_A^\ast(T_0^r(f))_{-\lambda}g\rangle_{\lambda+\eta}\langle l_j, h\rangle_\nu\\
&=&\langle (\sum_j\langle{r_j}, R_A^\ast(T_0^r(f))_{-\lambda}g\rangle_{\lambda+\eta}l_j), h\rangle_\nu
=\langle T_{\lambda+\eta-\partial}^r(R_A^\ast(T_0^r(f))_{-\lambda}g), h\rangle_\nu\\
&=&\langle T_{\lambda+\eta+\nu}^r(R_A^\ast(T_0^r(f))_{-\lambda}g), h\rangle_\nu.
\end{eqnarray*}
Similarly, we have
\begin{eqnarray*}
\langle \sum_{i,j} {r_i}_\mu r_j\otimes l_i\otimes l_j|_{\mu=1\otimes \partial\otimes 1}, f\otimes g\otimes h\rangle_{(\lambda,\eta,\nu)}
=-\langle T^r_{\lambda+\eta+\nu}(L_A^\ast(T^r_0(g))_{-\eta} f), h\rangle_\nu.
\end{eqnarray*}
Therefore (\ref{x1}) holds if and only if
\begin{eqnarray*}
&&\langle T^r_{0}(f)_{-\lambda}
T^r_{\lambda+\eta+\nu}(g)-T_{\lambda+\eta+\nu}^r(R_A^\ast(T_0^r(f))_{-\lambda}g)
-T^r_{\lambda+\eta+\nu}(L_A^\ast(T^r_0(g))_{-\eta} f),  h
\rangle_\nu\\&&=0 ~~~\text{mod} ~~~(\lambda+\eta+\nu),\;\;\forall\
f,g,h\in A^{\ast c}.
\end{eqnarray*}
 It is straightforward that
the above equality holds if and only if
\begin{eqnarray}\label{x2}
T^r_{0}(f)_{-\lambda}
T^r_{0}(g)-T_{0}^r(R_A^\ast(T_0^r(f))_{-\lambda}g)-T^r_{0}(L_A^\ast(T^r_0(g))_{\lambda-\partial}
f)=0,\;\;\forall f,g\in A^{\ast c}.
\end{eqnarray}
Therefore the conclusion follows by replacing $\lambda$ by
$-\lambda$ in (\ref{x2}).
\end{proof}
\begin{cor}\label{corhomo}
Let $A$ be a finite associative conformal algebra which is free as
a $\mathbb{C}[\partial]$-module, $r\in A\otimes A$ be
antisymmetric and $\Delta_A$ be the map  defined by (\ref{eQ1})
through $r$. Suppose the associative conformal algebra structure
on $A^{\ast c}$ is obtained from $\Delta_A$. Let $T^r\in
\text{Chom}(A^{\ast c},A)$ be the element corresponding to $r$
through the isomorphism $A\otimes A\cong \text{Chom}(A^{\ast
c},A)$. Then $T_0^r: A^{\ast c}\rightarrow A$ is a homomorphism of
associative conformal algebras.
\end{cor}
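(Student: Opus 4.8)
Since $T^r$ is conformal linear, the relation $[\partial,T^r_\lambda]=-\lambda T^r_\lambda$ gives $[\partial,T^r_0]=0$, so $T_0^r\colon A^{\ast c}\to A$ is automatically a $\mathbb{C}[\partial]$-module homomorphism. The plan is therefore to prove only the multiplicativity
\[
T_0^r(f_\lambda g)=(T_0^r(f))_\lambda(T_0^r(g)),\qquad\forall\,f,g\in A^{\ast c},
\]
where the product on the left is the one induced on $A^{\ast c}$ by $\Delta_A$ via (\ref{eq3}).

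The heart of the argument is to re-express this induced product intrinsically through $T_0^r$; I would establish the identity
\[
f_\lambda g=R_A^\ast(T_0^r(f))_\lambda g+L_A^\ast(T_0^r(g))_{-\lambda-\partial}f,\qquad\forall\,f,g\in A^{\ast c}.
\]
To verify it I evaluate both sides on an arbitrary $a\in A$ through $(\cdot)_\mu(a)$. For the left-hand side I substitute the coboundary form of $\Delta_A$ read off from (\ref{eQ1}), namely $\Delta_A(a)=\sum_i\big(r_i\otimes a_{-\partial^{\otimes^2}}l_i-(r_i)_{1\otimes\partial}a\otimes l_i\big)$, into (\ref{eq3}); resolving $\partial^{\otimes^2}$ through the dual pairings (via $f_\lambda(\partial b)=\lambda f_\lambda b$ and its analogue for $g$) yields
\[
(f_\lambda g)_\mu(a)=\sum_i f_\lambda(r_i)\,g_{\mu-\lambda}(a_{-\mu}l_i)-\sum_i f_\lambda((r_i)_{\mu-\lambda}a)\,g_{\mu-\lambda}(l_i).
\]
For the right-hand side I expand $T_0^r(f)=\sum_i f_{-\partial}(r_i)l_i$ and pull the scalar polynomials through the $\lambda$-products using conformal sesquilinearity, obtaining $(R_A^\ast(T_0^r(f))_\lambda g)_\mu(a)=\sum_i f_\lambda(r_i)g_{\mu-\lambda}(a_{-\mu}l_i)$ and $(L_A^\ast(T_0^r(g))_{-\lambda-\partial}f)_\mu(a)=\sum_i g_{\mu-\lambda}(r_i)f_\lambda((l_i)_{\mu-\lambda}a)$. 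The first matches the first summand above immediately; the second matches the remaining summand only after applying the antisymmetry $\sum_i r_i\otimes l_i=-\sum_i l_i\otimes r_i$, which interchanges $r_i$ and $l_i$ and produces the required minus sign. This matching, together with the bookkeeping of the two distinct $\partial$-substitution conventions (the $\pm\partial^{\otimes^2}$ built into $\Delta_A$ versus the internal $\partial$'s inside $T_0^r$ and inside the actions $R_A^\ast,L_A^\ast$), is the main technical obstacle.

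Granting the identity, the conclusion follows with no further computation: applying the linear map $T_0^r$ to both sides gives
\[
T_0^r(f_\lambda g)=T_0^r\big(R_A^\ast(T_0^r(f))_\lambda g\big)+T_0^r\big(L_A^\ast(T_0^r(g))_{-\lambda-\partial}f\big),
\]
and the right-hand side is exactly $(T_0^r(f))_\lambda(T_0^r(g))$ by equation (\ref{x4}) of Theorem \ref{th1}, which is available because $r$ is an antisymmetric solution of the associative conformal Yang--Baxter equation. Hence $T_0^r$ is a homomorphism of associative conformal algebras. The whole proof thus pivots on the product-rewriting identity; once it is in place, Theorem \ref{th1} finishes the job.
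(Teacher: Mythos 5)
Your proof is correct and follows essentially the same route as the paper's: both hinge on establishing the identity $f_\lambda g = R_A^\ast(T_0^r(f))_\lambda g + L_A^\ast(T_0^r(g))_{-\lambda-\partial}f$ expressing the dual product (induced by $\Delta_A$) through $T_0^r$, and then applying $T_0^r$ and invoking (\ref{x4}) of Theorem \ref{th1}. The only difference is cosmetic --- the paper verifies that identity in coordinates via a dual $\mathbb{C}[\partial]$-basis and structure constants, whereas you verify it by pairing against an arbitrary $a\in A$ --- and you are right (indeed more explicit than the paper) that the final step requires $r$ to be a solution of the associative conformal Yang--Baxter equation.
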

\begin{proof}
Let $\{e_1,\cdots, e_n\}$ be a $\mathbb{C}[\partial]$-basis of $A$
and $\{e_1^{\ast},\cdots, e_n^{\ast}\}$ be the dual
$\mathbb{C}[\partial]$-basis. Set $${e_i}_\lambda
e_j=\sum_kP_{ij}^ke_k,\;\;r=\sum_{i,j}a_{ij}(\partial\otimes
1,1\otimes \partial)e_i \otimes e_j,$$ where
$a_{ij}(\lambda,\mu)\in \mathbb{C}[\lambda,\mu]$. Then
\begin{eqnarray*}
\Delta_A(e_k)&=&(I\otimes L_A(e_k)_\lambda-R_A(a)_\lambda\otimes 1)r|_{\lambda=-\partial^{\otimes^2}}\\
&=&\sum_{i,j}a_{ij}(\partial\otimes 1,\lambda+1\otimes \partial)e_i \otimes {e_k}_\lambda e_j
-\sum_{i,j}a_{ij}(\lambda+\partial\otimes 1,1\otimes \partial){e_i}_{-\lambda-\partial}e_k\otimes e_j\\
&=&\sum_{i,j,s}(a_{ij}(\partial\otimes 1,-\partial\otimes 1)P_{kj}^s(-\partial^{\otimes^2},1\otimes \partial)-a_{js}(-1\otimes \partial, 1\otimes \partial)P_{jk}^i(1\otimes \partial, \partial\otimes 1))e_i\otimes e_s.
\end{eqnarray*}
Note that $T_0^r(e_k^\ast)=\sum_ja_{kj}(-\partial,\partial)e_j$.
Then by Proposition \ref{Dpro1},
\begin{eqnarray*}
{e_l^\ast}_\lambda e_t^\ast&=&\sum_{j,k}(a_{lj}(\lambda,-\lambda)P_{kj}^t(\partial,-\lambda-\partial)
-a_{jt}(\lambda+\partial,-\lambda-\partial)P_{jk}^l(-\lambda-\partial,\lambda))e_k^\ast\\
&=&\sum_{j,k}(a_{lj}(\lambda,-\lambda) {e_t^\ast}_{-\lambda-\partial^{A^{\ast c}}}({e_k}_{\partial^{A^{\ast c}}} e_j)-a_{jt}(\lambda+\partial,-\lambda-\partial)
{e_l^\ast}_\lambda({e_j}_{-\lambda-\partial^{A^{\ast c}}}e_k))e_k^\ast\\
&=&\sum_{j,k}(a_{lj}(\lambda,-\lambda) {e_t^\ast}_{-\lambda-\partial^{A^{\ast c}}}({e_k}_{\partial^{A^{\ast c}}} e_j)+a_{tj}(-\lambda-\partial,\lambda+\partial)
{e_l^\ast}_\lambda({e_j}_{-\lambda-\partial^{A^{\ast c}}}e_k))e_k^\ast\\
&=&\sum_{k}({e_t^\ast}_{-\lambda-\partial^{A^{\ast c}}}({e_k}_{\partial^{A^{\ast c}}} T_0^r(e_l))
+{e_l^\ast}_\lambda(T_0^r({e_t})_{-\lambda-\partial^{A^{\ast c}}}e_k))e_k^\ast\\
&=&R_A^{\ast}(T_0^r(e_l))_\lambda e_t^\ast+L_A^\ast(T_0^r(e_t))_{-\lambda-\partial} e_l^\ast.
\end{eqnarray*}
Thus by Theorem \ref{th1}, we have
$${T_0^r(e_l^\ast)}_\lambda{T_0^r(e_t^\ast)}=T_0^r({e_l^\ast}_\lambda
e_t^\ast),\;\; \forall\ l, t\in\{1,\cdots, n\}.$$ Therefore
$T_0^r: A^{\ast c}\rightarrow A$ is a homomorphism of associative
conformal algebras.
\end{proof}

Let $A$ be a Frobenius conformal algebra with a non-degenerate
invariant  conformal bilinear form $\langle \cdot , \cdot
\rangle_\lambda$, which is finitely generated and free as a
$\mathbb{C}[\partial]$-module. Define
\begin{eqnarray}
\langle a\otimes b, c\otimes d\rangle_{(\lambda,\mu)} =\langle a,
c\rangle_\lambda \langle b, d\rangle_\mu,\;\;\forall\ a,b,c,d\in
A.
\end{eqnarray}
Let $r=\sum_{i} r_i\otimes l_i\in A\otimes A$. Define a linear map $P^r: A\rightarrow A[\lambda]$ by
\begin{eqnarray*}
\langle r, u\otimes v\rangle_{(\lambda,\mu)}=\langle P_{\lambda-\partial}^r(u), v\rangle_\mu,\;\;\forall u,v\in A.
\end{eqnarray*}
Obviously, $P^r\in \text{Cend}(A)$.

\begin{cor}\label{th2}
Let $A$ be a symmetric Frobenius conformal algebra which  is finitely generated and free as a $\mathbb{C}[\partial]$-module and $r\in A\otimes A$ be antisymmetric. Then $r$ is a solution of
associative conformal Yang-Baxter equation in $A$ if and only if $P^r \in \text{Cend}(A)$ satisfies
\begin{eqnarray}\label{Rota1}
P^r_{0}(a)_{\lambda}P^r_{0}(b)=P_{0}^r(P_0^r(a)_{\lambda}b)+P^r_{0}(
a_\lambda P^r_0(b)), \;\;\forall\ a,b\in A.
\end{eqnarray}
\end{cor}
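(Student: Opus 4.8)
The plan is to obtain this from Theorem~\ref{th1} by transporting the characterization there through the isomorphism $\varphi\colon A\to A^{\ast c}$, $v\mapsto \varphi_v$, $(\varphi_v)_\lambda w=\langle v,w\rangle_\lambda$, which is a $\mathbb{C}[\partial]$-module isomorphism precisely because the conformal bilinear form is non-degenerate. First I would record the relation between $P^r$ and $T^r$. Writing out the definition of $P^r$ and using the symmetry $\langle a,b\rangle_\lambda=\langle b,a\rangle_{-\lambda}$, one finds $P^r_\nu(u)=\sum_i \langle r_i,u\rangle_{\nu+\partial}\,l_i$ with the $\partial$ acting on $l_i$, so that $P^r_0=T^r_0\circ\varphi$; in particular $P^r_0(a)=T^r_0(\varphi_a)$ for every $a\in A$. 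Since $\varphi$ is onto, it then suffices to substitute $f=\varphi_a$ and $g=\varphi_b$ into condition (\ref{x4}) of Theorem~\ref{th1} and show that it becomes (\ref{Rota1}), the first term $T^r_0(\varphi_a)_\lambda T^r_0(\varphi_b)$ already matching $P^r_0(a)_\lambda P^r_0(b)$.

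The safe way to carry out this substitution is to re-run the computation of Theorem~\ref{th1} with the Frobenius form in place of the canonical $A$-$A^{\ast c}$ pairing. Because the form is non-degenerate, the induced pairing on $A\otimes A\otimes A$ is non-degenerate, so $r\bullet r\equiv 0 \bmod \partial^{\otimes^3}$ holds if and only if $\langle r\bullet r,\,u\otimes v\otimes w\rangle_{(\lambda,\eta,\nu)}\equiv 0 \bmod (\lambda+\eta+\nu)$ for all $u,v,w\in A$ (the ideal $(\partial^{\otimes^3})$ corresponds to $(\lambda+\eta+\nu)$ under the pairing by conformal sesquilinearity, exactly as in the proof of Theorem~\ref{th1}). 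I would then evaluate the three summands of $r\bullet r$ against $u\otimes v\otimes w$, using sesquilinearity to perform the $\partial$-substitutions, the identity $(P^r_0(u))_\alpha l_j=\sum_i\langle r_i,u\rangle_{-\alpha}(l_i)_\alpha l_j$ (and its analogue for $r_j$) to recognize $P^r_0(u)$ and $P^r_0(v)$, and the invariance $\langle a_\lambda b,c\rangle_\mu=\langle a,b_{\mu-\partial}c\rangle_\lambda$ to turn the internal $\lambda$-products into the multiplications $P^r_0(a)_\lambda b$ and $a_\lambda P^r_0(b)$. The three terms should assemble into $\langle P^r_0(u)_{-\lambda}P^r_{\lambda+\eta+\nu}(v)-P^r_{\lambda+\eta+\nu}(\cdots)-P^r_{\lambda+\eta+\nu}(\cdots),\,w\rangle_\nu$; reducing modulo $(\lambda+\eta+\nu)$ replaces each $P^r_{\lambda+\eta+\nu}$ by $P^r_0$, and non-degeneracy in $w$ together with the renaming $-\lambda\mapsto\lambda$ yields (\ref{Rota1}). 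As in Theorem~\ref{th1}, the antisymmetry of $r$ is what supplies the correct sign of the last term.

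The main obstacle is the conformal bookkeeping of $\lambda$ and $\partial$ when passing between the dual actions on $A^{\ast c}$ and honest multiplications on $A$. One is tempted to argue abstractly that $\varphi$ intertwines $L_A$ with $R_A^\ast$ and $R_A$ with $L_A^\ast$, as for ordinary symmetric Frobenius algebras, and simply translate (\ref{x4}) termwise; but this fails here, since a direct computation gives $(R_A^\ast(x)_\lambda\varphi_b)_\mu w=\langle x,b_{\mu-\lambda}w\rangle_\lambda$ whereas $(\varphi_{x_\lambda b})_\mu w=\langle x,b_{\mu-\partial}w\rangle_\lambda$, and these genuinely differ (inner subscript $\mu-\lambda$ versus $\mu-\partial$ on $w$). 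The clean correspondence only re-emerges after $T^r_0$ is applied and one reduces modulo $(\lambda+\eta+\nu)$, which is precisely why the reduction step of Theorem~\ref{th1} cannot be bypassed and why it is cleanest to pair $r\bullet r$ against $A^{\otimes 3}$ via the Frobenius form directly rather than to invoke a termwise dictionary.
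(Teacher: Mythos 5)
Your first paragraph is exactly the paper's strategy: establish $P^r_0=T^r_0\circ\varphi$ and transport condition (\ref{x4}) of Theorem \ref{th1} through the $\mathbb{C}[\partial]$-module isomorphism $\varphi$. Where you diverge is in how the two middle terms of (\ref{x4}) are converted. The paper does precisely the ``termwise dictionary'' that you reject: a short direct computation using invariance and symmetry of the form shows
\[
R_A^\ast(P_0^r(a))_{\lambda}\varphi(b)=\varphi\bigl(P_0^r(a)_\lambda b\bigr),\qquad
L_A^\ast(P^r_0(b))_{-\lambda-\partial}\varphi(a)=\varphi\bigl(a_\lambda P_0^r(b)\bigr),
\]
after which (\ref{Rota1}) drops out of (\ref{x4}) immediately, with no need to revisit the $r\bullet r$ computation.

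The obstruction you cite to this dictionary is illusory. You correctly compute $(R_A^\ast(x)_\lambda\varphi_b)_\mu w=\langle x,b_{\mu-\lambda}w\rangle_\lambda$ and $(\varphi_{x_\lambda b})_\mu w=\langle x,b_{\mu-\partial}w\rangle_\lambda$, but these two expressions are equal: in the second one $\partial$ sits in the right-hand slot of $\langle\cdot,\cdot\rangle_\lambda$, and by the defining property $\langle x,\partial y\rangle_\lambda=\lambda\langle x,y\rangle_\lambda$ of a conformal bilinear form it evaluates to $\lambda$, turning $b_{\mu-\partial}w$ into $b_{\mu-\lambda}w$. So the intertwining relations hold on the nose, not merely after applying $T^r_0$ and reducing modulo $(\lambda+\eta+\nu)$. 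Your fallback route --- pairing $r\bullet r$ against $A^{\otimes 3}$ via the Frobenius form and re-running the reduction of Theorem \ref{th1}, which is legitimate since surjectivity of $\varphi$ makes the two pairings interchangeable --- would also prove the corollary, but it redoes the work of Theorem \ref{th1} that the dictionary renders unnecessary. The remaining ingredients of your outline (the identification $P^r_0=T^r_0\circ\varphi$, the role of antisymmetry in the sign of the last term, the final renaming of $\lambda$) match the paper.
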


\begin{proof}
Since $A$ has a non-degenerate symmetric invariant conformal
bilinear form,   $\varphi: A\longrightarrow
A^{\ast c},~~ a\mapsto \varphi_a$ defined by
$$(\varphi_a)_\lambda b=\langle a,b\rangle_\lambda,\quad \forall\ a, b\in A,$$
is an isomorphism of $\mathbb{C}[\partial]$-modules. By the definitions of $T^r$ in Theorem \ref{th1} and $P^r$, we get
$P^r=T^r\circ \varphi$. Therefore $P^r_0=T^r_0\circ \varphi$. Since $\varphi$ is a $\mathbb{C}[\partial]$-module
isomorphism, for any $f$, $g\in A^{\ast c}$, we assume that $\varphi(a)=f$ and $\varphi(b)=g$.
Then (\ref{x4}) becomes
\begin{eqnarray}
T^r_{0}(\varphi(a))_{\lambda}
T^r_{0}(\varphi(b))-T_{0}^r(R_A^\ast(T_0^r(\varphi(a)))_{\lambda}\varphi(b))-T^r_{0}(L_A^\ast(T^r_0(\varphi(b)))_{-\lambda-\partial}
\varphi(a))=0.
\end{eqnarray}
Thus
\begin{eqnarray}\label{RR1}
P^r_{0}(a)_{\lambda}
P^r_{0}(b)-T_{0}^r(R_A^\ast(P_0^r(a))_{\lambda}\varphi(b))-T^r_{0}(L_A^\ast(P^r_0(b))_{-\lambda-\partial}
\varphi(a))=0,\;\;\forall\ a,b\in A.
\end{eqnarray}
For all $a,b,c\in A$, we have
\begin{eqnarray*}
(R_A^\ast(P_0^r(a))_{\lambda}\varphi(b))_\mu c&=&\varphi(b)_{\mu-\lambda}(R_A(P_0^r(a))_\lambda c)=\langle b, R_A(P_0^r(a))_\lambda c\rangle_{\mu-\lambda}\\
&=&\langle b, c_{-\lambda-\partial}P_0^r(a)\rangle_{\mu-\lambda}=\langle b_{\mu-\lambda}c, P_0^r(a)\rangle_{-\lambda}\\
&=&\langle P_0^r(a),b_{\mu-\lambda}c\rangle_{\lambda}= \langle
P_0^r(a)_\lambda b, c\rangle_{\mu}.
\end{eqnarray*}
Therefore we have
$$R_A^\ast(P_0^r(a))_{\lambda}\varphi(b)=\varphi(P_0^r(a)_\lambda
b),\;\;\forall\ a,b\in A.$$ Similarly, we have
$$L_A^\ast(P^r_0(b))_{-\lambda-\partial}
\varphi(a)=\varphi(a_\lambda P_0^r(b)),\;\;\forall\ a,b\in A.$$
Hence (\ref{Rota1}) follows from (\ref{RR1}).

Similarly, we also obtain (\ref{x4}) from (\ref{Rota1}) through
$\varphi$. Then by Theorem \ref{th1}, the conclusion holds.
\end{proof}

By the fact that the $T_0^r$ in Theorems \ref{th1} and the $P_0^r$
in Corollary~\ref{th2} are $\mathbb{C}[\partial]$-module
homomorphisms, we present the following notions.

\begin{defi}{\rm
Let $A$ be an associative conformal algebra and $(M, l_A, r_A)$ be a bimodule of $A$.
A $\mathbb{C}[\partial]$-module homomorphism $T: M\rightarrow A$ is called an {\bf $\mathcal{O}$-operator} associated with
$(M, l_A, r_A)$ if $T$ satisfies
\begin{eqnarray}
T(u)_{\lambda}T(v)=T(l_A(T(u))_{\lambda}v)+T(r_A(T(v))_{-\lambda-\partial} u), \;\;\forall \text{$u$, $v\in M$.}
\end{eqnarray}
In particular, an $\mathcal{O}$-operator $T:A\rightarrow A$
associated with the bimodule $(A, L_A, R_A)$ is called  a {\bf
Rota-Baxter operator (of weight zero)} on $A$, that is, $T$ is a
$\mathbb{C}[\partial]$-module homomorphism satisfying
\begin{eqnarray}
T(a)_{\lambda}T(b)=T(T(a)_{\lambda}b)+T(a_\lambda T(b)), \;\;\forall\ \text{$a$, $b\in A$.}
\end{eqnarray}
}\end{defi}

\begin{ex}
Let $A$ be a finite associative conformal algebra which is free as
a $\mathbb{C}[\partial]$-module and $r\in A\otimes A$ be
antisymmetric. Then $r$ is a solution of associative conformal
Yang-Baxter equation if and only if $T_0^r$ is an
$\mathcal{O}$-operator associated with the bimodule $(A^{\ast c},
R_A^\ast, L_A^\ast)$. If in addition, $A$ is a symmetric Frobenius
conformal algebra, that is, $A$ has a non-degenerate symmetric
invariant conformal bilinear form, then $r$ is a solution of
associative conformal Yang-Baxter equation if and only if $P_0^r$
is a Rota-Baxter operator (of weight zero) on $A$.
\end{ex}
\begin{ex}
Let $A$ be an associative conformal algebra. The identity map $I$ is an $\mathcal{O}$-operator associated with the bimodule $(A, L_A,0)$ or $(A,0, R_A)$.
\end{ex}

Let $(M, l_A, r_A)$ be a bimodule of an associative conformal
algebra $A$. Then $(M^{\ast c}, r_A^\ast, l_A^\ast)$ is a bimodule
of $A$ by Proposition \ref{pr1}. Suppose that $M$ is a
$\mathbb{C}[\partial]$-module of finite rank. By Proposition 6.1
in \cite{BKL}, $M^{\ast c}\otimes A\cong Chom(M,A)$ as
$\mathbb{C}[\partial]$-modules through the isomorphism $\varphi$
defined as $$\varphi(f\otimes a)_\lambda
v=f_{\lambda+\partial^A}(v)a,\;\;\forall\ a\in A, v\in M, f\in
M^{\ast c}.$$ By the $\mathbb{C}[\partial]$-module actions on
$M^{\ast c}\otimes A$, we also get $M^{\ast c}\otimes A\cong
A\otimes M^{\ast c}$ as $\mathbb{C}[\partial]$-modules. Therefore
as $\mathbb{C}[\partial]$-modules, $Chom(M,A)\cong A\otimes
M^{\ast c}$. Consequently, for any $T\in Chom(M,A)$, we associate
an $r_T\in A\otimes M^{\ast c}\subset (A\ltimes_{r_A^\ast,
l_A^\ast} M^{\ast c})\otimes (A\ltimes_{r_A^\ast, l_A^\ast}
M^{\ast c})$.

\begin{thm}\label{thh2}
Let $A$ be a finite associative conformal algebra and $(M, l_A,
r_A)$ be a finite bimodule of $A$. Suppose that $A$ and $M$ are
free as $\mathbb{C}[\partial]$-modules. Let $T\in Chom(M,A)$ and
$r_T\in A\otimes M^{\ast c}\subset (A\ltimes_{r_A^\ast, l_A^\ast}
M^{\ast c})\otimes (A\ltimes_{r_A^\ast, l_A^\ast} M^{\ast c})$ be
the  element corresponding to $T$ under the above correspondence.
Then $r=r_T-\tau r_T$ is an antisymmetric solution of the
associative conformal Yang-Baxter equation in $A\ltimes_{r_A^\ast,
l_A^\ast} M^{\ast c}$ if and only if $T_0=T_\lambda|_{\lambda=0}$
is an $\mathcal{O}$-operator associated with the bimodule $(M,
l_A, r_A)$.
\end{thm}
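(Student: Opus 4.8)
The plan is to apply Theorem~\ref{th1} to the semidirect product and then match the resulting operator identity with the $\mathcal{O}$-operator condition. First I would record that $\hat{A}:=A\ltimes_{r_A^\ast,l_A^\ast}M^{\ast c}$ is a finite associative conformal algebra that is free as a $\mathbb{C}[\partial]$-module, being the direct sum of the free modules $A$ and $M^{\ast c}$ (the latter free because $M$ is finite and free); and that $r=r_T-\tau r_T$ is antisymmetric in $\hat A\otimes\hat A$. Thus Theorem~\ref{th1} applies to $(\hat A,r)$: the element $r$ solves \eqref{qw2} in $\hat A$ if and only if $T^r_0$ satisfies \eqref{x4} with $\hat A,R_{\hat A}^\ast,L_{\hat A}^\ast$ in place of $A,R_A^\ast,L_A^\ast$, i.e.\ if and only if $T^r_0$ is an $\mathcal{O}$-operator on $\hat A$ associated with $(\hat A^{\ast c},R_{\hat A}^\ast,L_{\hat A}^\ast)$. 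Since $M$ is finite and free I identify $\hat A^{\ast c}\cong A^{\ast c}\oplus M$ through $(M^{\ast c})^{\ast c}\cong M$, where $u\in M$ acts on $M^{\ast c}$ by $u_\lambda(\phi)=\phi_{-\lambda}(u)$.

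Next I would compute $T^r_0$ and the dualized actions on this decomposition. Writing $r_T=\sum_i a_i\otimes\phi_i$ with $a_i\in A$, $\phi_i\in M^{\ast c}$, the defining formula for $T^r$ together with the pairing $\hat A^{\ast c}\times\hat A\to\mathbb{C}$ shows that $T^r_\lambda$ is off-diagonal, sending $A^{\ast c}$ into $M^{\ast c}$ and $M$ into $A$, and that on $M$ it equals $-T_\lambda$; in particular $T^r_0|_M=-T_0$ (the sign is immaterial, as the $\mathcal{O}$-operator condition is invariant under $T\mapsto-T$). Using the semidirect-product multiplication rules (namely $A_\lambda A\subseteq A$, the action of $A$ on $M^{\ast c}$ given by $r_A^\ast$ on the left and $l_A^\ast$ on the right, and $M^{\ast c}{}_\lambda M^{\ast c}=0$) together with Proposition~\ref{pr1}, I would then establish the double-duality identities
\[
R_{\hat A}^\ast(a)_\lambda v=l_A(a)_\lambda v,\qquad L_{\hat A}^\ast(a)_\lambda v=r_A(a)_\lambda v,\qquad\forall\ a\in A,\ v\in M,
\]
both valued in $M\subseteq\hat A^{\ast c}$.

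With these identities I would evaluate the $\mathcal{O}$-operator condition for $T^r_0$ as $F,G$ range over the summands $A^{\ast c}$ and $M$. Taking $F=u$, $G=v\in M$ and substituting $T^r_0|_M=-T_0$ converts \eqref{x4} into
\[
T_0(u)_\lambda T_0(v)=T_0\big(l_A(T_0(u))_\lambda v\big)+T_0\big(r_A(T_0(v))_{-\lambda-\partial}u\big),
\]
which is exactly the $\mathcal{O}$-operator condition for $T_0$ associated with $(M,l_A,r_A)$; restricting to this case already yields the implication ``$r$ solves \eqref{qw2} $\Rightarrow T_0$ is an $\mathcal{O}$-operator''. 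For the converse I would check that this condition forces \eqref{x4} for all $F,G\in\hat A^{\ast c}$. The case $F,G\in A^{\ast c}$ is automatic, every term vanishing because $M^{\ast c}$ squares to zero and $A^{\ast c}$ annihilates $M^{\ast c}$; the two mixed cases produce identities valued in $M^{\ast c}$ that, after pairing with $M$ and unwinding the duals, are the conformal transposes of the displayed identity and therefore hold once it does.

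The main obstacle is the bookkeeping of the spectral parameter $\lambda$ and of the $\partial$-shifts concealed in $R_{\hat A}^\ast,L_{\hat A}^\ast$ and in the double-dual pairing $u_\lambda(\phi)=\phi_{-\lambda}(u)$: one must verify that these shifts conspire so that the $M\times M$ evaluation reproduces \eqref{x4} exactly, and that the mixed-input equations collapse to the transpose of the single $\mathcal{O}$-operator identity rather than imposing additional constraints.
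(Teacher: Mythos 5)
Your argument is correct in outline, but it takes a genuinely different route from the paper. The paper proves Theorem~\ref{thh2} by brute force: it fixes $\mathbb{C}[\partial]$-bases of $A$, $M$, $M^{\ast c}$, writes $T_\lambda(v_i)=\sum_j g_{ij}(\lambda,\partial)e_j$, expands $r\bullet r$ term by term modulo $\partial^{\otimes^3}$, and regroups the nine resulting summands until the $\mathcal{O}$-operator condition appears in each tensor slot. You instead invoke Theorem~\ref{th1} for the semidirect product $\hat A=A\ltimes_{r_A^\ast,l_A^\ast}M^{\ast c}$ (legitimate, since $\hat A$ is finite and free and $r=r_T-\tau r_T$ is antisymmetric), identify $\hat A^{\ast c}\cong A^{\ast c}\oplus M$, and translate \eqref{x4} case by case. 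Your key structural observations all check out: $T^r_0$ is off-diagonal with $T^r_0|_M=-T_0$; the identities $R_{\hat A}^\ast(a)_\lambda v=l_A(a)_\lambda v$ and $L_{\hat A}^\ast(a)_\lambda v=r_A(a)_\lambda v$ follow from Proposition~\ref{pr1} and the semidirect-product multiplication, and they show $M$ is a sub-bimodule of $(\hat A^{\ast c},R_{\hat A}^\ast,L_{\hat A}^\ast)$ isomorphic to $(M,l_A,r_A)$; the $M\times M$ case of \eqref{x4} then is exactly the $\mathcal{O}$-operator condition (with the sign of $T^r_0|_M$ indeed immaterial), and the $A^{\ast c}\times A^{\ast c}$ case is vacuous because $M^{\ast c}$ is an abelian ideal annihilated by $A^{\ast c}$. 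What your approach buys is conceptual economy — it reuses the operator characterization already proved and mirrors the classical argument for ASI bialgebras, whereas the paper's computation is self-contained but opaque. What it costs is the mixed case: there the first term $T^r_0(f)_\lambda T^r_0(v)=l_A^\ast(T^r_0(v))_{-\lambda-\partial}T^r_0(f)$ is genuinely nonzero, all three terms land in $M^{\ast c}$, and the claim that pairing against $M$ yields precisely the conformal transpose of the $M\times M$ identity is asserted rather than verified. That claim is true, but it is exactly where all the $\lambda$- and $\partial$-shift bookkeeping you flag as "the main obstacle" resides, so a complete write-up must carry out that adjointness computation; the paper sidesteps it by never decomposing $\hat A^{\ast c}$ at all.
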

\begin{proof}
Let $\{e_1,\cdots, e_n\}$ be a $\mathbb{C}[\partial]$-basis of
$A$,  $\{v_1,\cdots, v_m\}$ be a $\mathbb{C}[\partial]$-basis of
$M$ and $\{v_1^\ast,\cdots, v_m^\ast\}$ be the dual
$\mathbb{C}[\partial]$-basis of $M^{\ast c}$. Assume that
$$T_\lambda(v_i)=\sum_{j=1}^n
g_{ij}(\lambda,\partial)e_j,\;\;\forall\ i=1, \cdots, m,$$ where
$g_{ij}(\lambda,\partial)\in \mathbb{C}[\lambda,\partial]$. Then
we have
\begin{eqnarray*}
r_T=\sum_{j=1}^n\sum_{i=1}^mg_{ij}(-\partial^{\otimes^2},\partial\otimes 1)e_j\otimes v_i^\ast.
\end{eqnarray*}
Therefore we have
\begin{eqnarray*}
r=\sum_{i,j}(g_{ij}(-\partial^{\otimes^2},\partial\otimes 1)e_j\otimes v_i^\ast-g_{ij}(-\partial^{\otimes^2}, 1\otimes \partial)v_i^\ast \otimes e_j).
\end{eqnarray*}
Moreover, by the definition of $(M^{\ast c}, r_A^\ast, l_A^\ast)$,
we have
\begin{eqnarray*}
l_A^\ast(e_i)_\lambda v_j^\ast=\sum_k {v_j^\ast}_{-\lambda-\partial}(l_A(e_i)_\lambda v_k) v_k^\ast,~~~
r_A^\ast(e_i)_\lambda v_j^\ast=\sum_k {v_j^\ast}_{-\lambda-\partial}(r_A(e_i)_\lambda v_k) v_k^\ast.
\end{eqnarray*}
Then we get
\begin{eqnarray*}
r\bullet r&\equiv& \sum_{i,j,k,l}(-g_{ij}(0,\partial\otimes 1\otimes 1)g_{kl}(0,-1\otimes \partial\otimes 1)e_j\otimes v_k^\ast \otimes {v_i^\ast}_\mu e_l|_{\mu=\partial\otimes 1\otimes 1}\\
&&-g_{ij}(0,-\partial\otimes 1\otimes 1)g_{kl}(0,1\otimes \partial\otimes 1)v_i^\ast \otimes e_l\otimes {e_j}_\mu v_k^\ast|_{\mu=\partial\otimes 1\otimes 1}\\
&&+g_{ij}(0,-\partial\otimes 1\otimes 1)g_{kl}(0,-1\otimes \partial\otimes 1)v_i^\ast \otimes v_k^\ast \otimes {e_j}_\mu e_l|_{\mu=\partial\otimes 1\otimes 1}\\
&&-g_{ij}(0,\partial\otimes 1\otimes 1)g_{kl}(0,-1\otimes 1\otimes \partial)e_j\otimes {e_l}_\mu v_i^\ast\otimes v_k^\ast|_{\mu=-\partial^{\otimes^2}\otimes 1}\\
&&+g_{ij}(0,-\partial\otimes 1\otimes 1)g_{kl}(0,-1\otimes 1\otimes \partial)v_i^\ast\otimes {e_l}_\mu e_j\otimes v_k^\ast|_{\mu=-\partial^{\otimes^2}\otimes 1}\\
&&-g_{ij}(0,-\partial\otimes 1\otimes 1)g_{kl}(0,1\otimes 1\otimes \partial) v_i^\ast \otimes
{v_k^\ast}_\mu e_j\otimes e_l|_{\mu=-\partial^{\otimes^2}\otimes 1}\\
&&+g_{ij}(0,-1\otimes \partial\otimes 1)g_{kl}(0,-1\otimes 1\otimes \partial)
{e_j}_\mu e_l \otimes v_i^\ast\otimes v_k^\ast|_{\mu=1\otimes \partial \otimes 1}\\
&&-g_{ij}(0,-1\otimes \partial\otimes 1)g_{kl}(0,1\otimes 1\otimes \partial)
{e_j}_\mu v_k^\ast \otimes v_i^\ast\otimes e_l|_{\mu=1\otimes \partial \otimes 1}\\
&&-g_{ij}(0,1\otimes \partial\otimes 1)g_{kl}(0,-1\otimes 1\otimes \partial)
{v_i^\ast}_\mu e_l \otimes e_j\otimes v_k^\ast|_{\mu=1\otimes \partial \otimes 1}
)~~~\text{$mod~~(\partial^{\otimes^3})$}\\
&\equiv & \sum_{i,k}((-T_0(v_i)\otimes v_k^\ast\otimes
{{v_i}^\ast}_\mu T_0(v_k)
-v_i^\ast \otimes T_0(v_k)\otimes T_0(v_i)_\mu v_k^\ast\\
&&+v_i^\ast \otimes v_k^\ast \otimes {T_0(v_i)}_\mu T_0(v_k))|_{\mu=\partial\otimes 1 \otimes1}\\
&&+(-T_0(v_i)\otimes T_0(v_k)_\mu v_i^\ast\otimes v_k^\ast+v_i^\ast \otimes {T_0(v_k)}_\mu T_0(v_i) \otimes v_k^\ast\\
&&-v_i^\ast\otimes {v_k^\ast}_\mu T_0(v_i)\otimes T_0(v_k))|_{\mu=1\otimes 1 \otimes \partial}\\
&&+(T_0(v_i)_\mu T_0(v_k) \otimes v_i^\ast \otimes v_k^\ast-T_0(v_i)_\mu v_k^\ast \otimes v_i^\ast \otimes T_0(v_k)\\
&&-{v_i^\ast}_\mu T_0(v_k)\otimes T_0(v_i)\otimes v_k^\ast) |_{\mu=1\otimes \partial \otimes 1}~~~\text{$mod~~(\partial^{\otimes^3})$}.\end{eqnarray*}
Since $T_0$ is a $\mathbb{C}[\partial]$-module homomorphism, we have
\begin{eqnarray*}
&&\sum_{i,k}T_0(v_i)\otimes v_k^\ast\otimes {v_i^\ast}_\mu T_0(v_k)|_{\mu=\partial\otimes 1\otimes 1}\\
&=&\sum_{i,k}T_0(v_i)\otimes v_k^\ast\otimes l_A^\ast(T_0(v_k))_{-\mu-\partial}v_i^\ast|_{\mu=\partial\otimes 1\otimes 1}\\
&\equiv &\sum_{i,k}T_0(v_i)\otimes v_k^\ast\otimes l_A^\ast(T_0(v_k))_{1\otimes \partial\otimes 1}v_i^\ast ~~\text{$mod~~(\partial^{\otimes^3})$}\\
&\equiv &\sum_{i,j,k}T_0(v_i)\otimes v_k^\ast\otimes {v_i^\ast}_{\partial\otimes 1\otimes 1}(l_A(T_0(v_k))_{1\otimes \partial\otimes 1}v_j)v_j^\ast ~~\text{$mod~~(\partial^{\otimes^3})$}\\
&\equiv &\sum_{i,j,k}T_0({v_i^\ast}_{\partial}(l_A(T_0(v_k))_{1\otimes \partial\otimes 1}v_j)v_i)\otimes v_k^\ast\otimes v_j^\ast ~~\text{$mod~~(\partial^{\otimes^3})$}\\
&\equiv & \sum_{j,k}T_0(l_A((T_0(v_k))_{\mu}v_j)\otimes v_k^\ast\otimes v_j^\ast|_{\mu= 1\otimes \partial\otimes 1} ~~\text{$mod~~(\partial^{\otimes^3})$}.
\end{eqnarray*}
Similarly, we have
\begin{eqnarray*}
&&r\bullet r ~~\text{$mod~~(\partial^{\otimes^3})$}\\
&\equiv& \sum_{i,k}(({T_0(v_k)}_\mu T_0(v_i)-T_0(l_A(T(v_k))_\mu v_i)-T_0(r_A(T_0(v_i))_{-\mu-\partial} v_k)  \otimes v_k^\ast\otimes v_i^\ast
)|_{\mu=1\otimes \partial\otimes 1}\\
&&+(v_i^\ast \otimes ({T_0(v_k)}_\mu T_0(v_i)-T_0(l_A(T(v_k))_\mu v_i)-T_0(r_A(T_0(v_i))_{-\mu-\partial} v_k) \otimes v_k^\ast)|_{\mu=1\otimes 1\otimes \partial}\\
&&+(v_k^\ast\otimes v_i^\ast\otimes ({T_0(v_k)}_\mu T_0(v_i)-T_0(l_A(T(v_k))_\mu v_i)-T_0(r_A(T_0(v_i))_{-\mu-\partial} v_k) \otimes v_k^\ast)|_{\mu=\partial\otimes 1\otimes 1}.
\end{eqnarray*}
Therefore $r$ is a solution of associative conformal Yang-Baxter
equation in the associative conformal algebra $A\ltimes_{r_A^\ast,
l_A^\ast} M^{\ast c}$ if and only if
\begin{eqnarray*}
{T_0(v_k)}_\mu T_0(v_i)=T_0(l_A(T(v_k))_\mu
v_i)+T_0(r_A(T_0(v_i))_{-\mu-\partial} v_k), \;\;\forall\ i, k\in
\{1,\cdots, m\}.
\end{eqnarray*}
Thus this conclusion holds.
\end{proof}

At the end of this paper, we introduce a class of conformal algebras, namely, dendriform conformal algebras,
which are used to construct $\mathcal O$-operators naturally and hence give solutions of associative conformal Yang-Baxter equation.

\begin{defi}{\rm
Let $A$ be a $\mathbb{C}[\partial]$-module with two bilinear products $\prec_\lambda$ and $\succ_\lambda: A\times A\rightarrow A[\lambda]$. If for all $a$, $b$, $c\in A$,
\begin{eqnarray}
(\partial a)\succ_\lambda b=-\lambda a\succ_\lambda b, ~~a\succ_\lambda (\partial b)=(\partial+\lambda) (a\succ_\lambda b),\\
(\partial a)\prec_\lambda b=-\lambda a\prec_\lambda b, ~~a\prec_\lambda (\partial b)=(\partial+\lambda) (a\prec_\lambda b),\\
(a\prec_\lambda b)\prec_{\lambda+\mu} c=a\prec_\lambda(b\ast_\mu c),\\~~(a\succ_\lambda b)\prec_{\lambda+\mu} c=a\succ_\lambda(b\prec_\mu c),\\
~~a\succ_\lambda(b\succ_\mu c)=(a\ast_\lambda b)\succ_{\lambda+\mu}c,
\end{eqnarray}
where $a\ast_\lambda b=a\prec_\lambda b+a\succ_\lambda b$, then $(A,\prec_\lambda,\succ_\lambda)$ is called a {\bf dendriform conformal algebra}.
}\end{defi}

\begin{rmk}{\rm
It is obvious that $(A,\prec_\lambda,\succ_\lambda)$ with
$\succ_\lambda$ being trivial (or $\prec_\lambda$ being trivial)
is a dendriform conformal algebra if and only if
$(A,\prec_\lambda)$ (or $(A,\succ_\lambda))$ is an associative
conformal algebra.}
\end{rmk}

\begin{ex}
Let $(A,\prec,\succ)$ be a dendriform algebra {\rm (\cite{Lod})}.
Then $\text{Cur}(A)=\mathbb{C}[\partial]\otimes A$ is endowed a
natural dendriform conformal algebra as follows.
\begin{eqnarray}
a\prec_\lambda b=a\prec b,~~~a\succ_\lambda b=a\succ b,~~~\forall
a, b\in A.
\end{eqnarray}
Moreover, $(\text{Cur}(A), \prec_\lambda, \succ_\lambda)$ is
called a {\bf current dendriform conformal algebra}. It is
straightforward that any dendriform conformal algebra which is
free and of rank one as a $\mathbb{C}[\partial]$-module is
current.
\end{ex}

\begin{pro}
Let $(A,\prec_\lambda,\succ_\lambda)$ be a dendriform conformal algebra. Define
\begin{equation}a\ast_\lambda b=a\prec_\lambda b+a\succ_\lambda b,\;\;\forall\ a,b\in A.\end{equation}
Then $(A,\ast_\lambda)$ is an associative conformal algebra. We call $(A,\ast_\lambda)$ {\bf the associated associative conformal algebra of $(A,\prec_\lambda,\succ_\lambda)$ } and $(A,\prec_\lambda,\succ_\lambda)$ is called a {\bf compatible dendriform conformal algebra structure on the associative conformal algebra $(A,\ast_\lambda)$}.
\end{pro}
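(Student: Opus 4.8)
The plan is to verify directly the two defining conditions of an associative conformal algebra (Definition \ref{def1}) for $(A,\ast_\lambda)$, namely conformal sesquilinearity and associativity. Sesquilinearity is immediate: since $\ast_\lambda=\prec_\lambda+\succ_\lambda$ is a sum of two $\mathbb{C}$-bilinear maps it is again $\mathbb{C}$-bilinear, and adding the two sesquilinearity relations for $\prec_\lambda$ and for $\succ_\lambda$ yields $(\partial a)\ast_\lambda b=-\lambda\,a\ast_\lambda b$ and $a\ast_\lambda(\partial b)=(\partial+\lambda)(a\ast_\lambda b)$. Thus $(A,\ast_\lambda)$ is a conformal algebra, and it only remains to establish associativity.

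For associativity the strategy is the usual dendriform bookkeeping transported to the conformal setting. First I would expand the left-hand side by writing $\ast=\prec+\succ$ twice:
\begin{align*}
(a\ast_\lambda b)\ast_{\lambda+\mu}c
&=(a\prec_\lambda b)\prec_{\lambda+\mu}c+(a\succ_\lambda b)\prec_{\lambda+\mu}c\\
&\quad+(a\prec_\lambda b)\succ_{\lambda+\mu}c+(a\succ_\lambda b)\succ_{\lambda+\mu}c.
\end{align*}
The two $\succ_{\lambda+\mu}$-terms recombine, by bilinearity, into $(a\ast_\lambda b)\succ_{\lambda+\mu}c$, which by the axiom $a\succ_\lambda(b\succ_\mu c)=(a\ast_\lambda b)\succ_{\lambda+\mu}c$ equals $a\succ_\lambda(b\succ_\mu c)$. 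For the two remaining $\prec_{\lambda+\mu}$-terms I would apply $(a\prec_\lambda b)\prec_{\lambda+\mu}c=a\prec_\lambda(b\ast_\mu c)$ to the first and $(a\succ_\lambda b)\prec_{\lambda+\mu}c=a\succ_\lambda(b\prec_\mu c)$ to the second.

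After these substitutions the expression reads $a\prec_\lambda(b\ast_\mu c)+a\succ_\lambda(b\prec_\mu c)+a\succ_\lambda(b\succ_\mu c)$. The final step is to recombine the last two summands, using $\mathbb{C}$-bilinearity of $\succ_\lambda$ in its second argument, into $a\succ_\lambda(b\prec_\mu c+b\succ_\mu c)=a\succ_\lambda(b\ast_\mu c)$, so that the whole expression becomes $a\prec_\lambda(b\ast_\mu c)+a\succ_\lambda(b\ast_\mu c)=a\ast_\lambda(b\ast_\mu c)$, which is exactly the right-hand side of the associativity identity. There is no genuine obstacle here: the three mixed associativity relations in the definition are each applied exactly once, and bilinearity is used only to split and recombine the $\ast$-products. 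The one thing to watch is the bookkeeping of which of the four expanded terms is governed by which relation, together with the consistent placement of the spectral parameters $\lambda$, $\mu$ and $\lambda+\mu$, which here match up automatically because each dendriform axiom already carries the shifted parameter $\lambda+\mu$ on the outer product.
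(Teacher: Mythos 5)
Your proof is correct: the paper dismisses this verification as ``It is straightforward,'' and your argument is precisely the intended direct check, splitting $(a\ast_\lambda b)\ast_{\lambda+\mu}c$ into four terms and applying each of the three dendriform conformal axioms exactly once, with sesquilinearity following by adding the corresponding relations for $\prec_\lambda$ and $\succ_\lambda$. Nothing further is needed.
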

\begin{proof}
It is straightforward.
\end{proof}

Let $(A,\prec_\lambda,\succ_\lambda)$ be a dendriform conformal
algebra. Set \begin{eqnarray*}
&&{L_\succ(a)}_\lambda(b)=a\succ_\lambda
b,\;\; {L_\prec(a)}_\lambda(b)=a\prec_\lambda b,\\
&&{R_\succ(a)}_\lambda b=b\succ_{-\lambda-\partial}
a,\;\;{R_\prec(a)}_\lambda b=b\prec_{-\lambda-\partial} a,\;\;
\forall\  a, b\in A.
\end{eqnarray*}

\begin{pro}\label{pro:bim}
Let $(A,\prec_\lambda,\succ_\lambda)$ be a dendriform conformal
algebra. Then $(A, L_\succ, R_\prec)$ is a bimodule of the
associated associative conformal algebra $(A,\ast_\lambda)$. Hence
the identity $I$ is an $\mathcal{O}$-operator of the associative
conformal algebra $(A,\ast_\lambda)$ associated with $(A, L_\succ,
R_\prec)$.
\end{pro}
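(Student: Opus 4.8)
The plan is to unwind all the definitions and observe that, under the natural identification of the two module actions with the two dendriform products, every bimodule axiom becomes one of the defining axioms of a dendriform conformal algebra. Concretely, I first record that $a\rightharpoonup_\lambda v=L_\succ(a)_\lambda v=a\succ_\lambda v$ directly from the definition, while for the right action the relation $v\leftharpoonup_\lambda a=R_\prec(a)_{-\lambda-\partial}v$ together with $R_\prec(a)_\mu v=v\prec_{-\mu-\partial}a$ gives, after substituting $\mu=-\lambda-\partial$ (the two occurrences of $\partial$ acting on $v$ cancelling, exactly as in the regular bimodule $(A,L_A,R_A)$), the identity $v\leftharpoonup_\lambda a=v\prec_\lambda a$. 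The first two defining axioms of a dendriform conformal algebra are precisely the conformal sesquilinearity of $\succ_\lambda$ and $\prec_\lambda$, and these guarantee that $L_\succ$ and $R_\prec$ take values in $Cend(A)$ and are $\mathbb{C}[\partial]$-module homomorphisms $A\to Cend(A)$.

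Next I would verify the three defining conditions of a bimodule $(A,L_\succ,R_\prec)$ of $(A,\ast_\lambda)$, namely the left-module condition $l_A(a\ast_\lambda b)_{\lambda+\mu}v=l_A(a)_\lambda(l_A(b)_\mu v)$, the right-module condition $r_A(b)_{-\lambda-\mu-\partial}(r_A(a)_{-\lambda-\partial}v)=r_A(a\ast_\mu b)_{-\lambda-\partial}v$, and the compatibility condition $l_A(a)_\lambda(r_A(b)_{-\mu-\partial}v)=r_A(b)_{-\lambda-\mu-\partial}(l_A(a)_\lambda v)$, taking $l_A=L_\succ$ and $r_A=R_\prec$. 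The left-module condition rewrites as $(a\ast_\lambda b)\succ_{\lambda+\mu}v=a\succ_\lambda(b\succ_\mu v)$, which is the axiom $a\succ_\lambda(b\succ_\mu c)=(a\ast_\lambda b)\succ_{\lambda+\mu}c$. Using $R_\prec(c)_{-\alpha-\partial}w=w\prec_\alpha c$, the right-module condition rewrites as $(v\prec_\lambda a)\prec_{\lambda+\mu}b=v\prec_\lambda(a\ast_\mu b)$, which is the axiom $(a\prec_\lambda b)\prec_{\lambda+\mu}c=a\prec_\lambda(b\ast_\mu c)$. Finally the compatibility condition rewrites as $(a\succ_\lambda v)\prec_{\lambda+\mu}b=a\succ_\lambda(v\prec_\mu b)$, which is the axiom $(a\succ_\lambda b)\prec_{\lambda+\mu}c=a\succ_\lambda(b\prec_\mu c)$. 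Hence $(A,L_\succ,R_\prec)$ is a bimodule of $(A,\ast_\lambda)$.

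For the $\mathcal{O}$-operator assertion I would simply specialize the defining identity of an $\mathcal{O}$-operator to $T=I$, $M=A$, $l_A=L_\succ$, $r_A=R_\prec$. It reads $u\ast_\lambda v=L_\succ(u)_\lambda v+R_\prec(v)_{-\lambda-\partial}u=u\succ_\lambda v+u\prec_\lambda v$, which holds by the very definition of $\ast_\lambda$. Thus $I$ is an $\mathcal{O}$-operator of $(A,\ast_\lambda)$ associated with $(A,L_\succ,R_\prec)$.

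Since the argument is essentially a dictionary between two sets of axioms, there is no genuine obstacle; the only point demanding care is the bookkeeping of the right-action convention, i.e. checking that the shifts by $-\lambda-\partial$ in the right-module and compatibility conditions collapse correctly to the $\prec_\lambda$-form, so that the spectral parameters on both sides of the corresponding dendriform axioms line up without error. This is the step I would write out most carefully, while the sesquilinearity and homomorphism verifications are routine.
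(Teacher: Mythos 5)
Your proof is correct, and it is exactly the verification the paper has in mind: the paper's own proof of Proposition \ref{pro:bim} is simply ``It is straightforward,'' and your dictionary between the three bimodule axioms (with $l_A=L_\succ$, $r_A=R_\prec$) and the three mixed dendriform axioms, plus the observation that the $\mathcal{O}$-operator identity for $T=I$ reduces to the definition of $\ast_\lambda$, fills in that omitted routine check faithfully, including the correct handling of the $-\lambda-\partial$ shifts in the right action.
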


\begin{proof}
It is straightforward.
\end{proof}

\begin{pro}\label{cor1}
Let $A$ be an associative conformal algebra and $(M,l_A,r_A)$ be a bimodule of $A$. Suppose $T: M\rightarrow A$ is an $\mathcal{O}$-operator associated with $(M,l_A,r_A)$.
Then  the following $\lambda$-product
\begin{eqnarray}
u\succ_\lambda v=l_A(T(u))_\lambda v,~~~u\prec_\lambda v=r_A(T(v))_{-\lambda-\partial}u,~~~~~u,~v\in M,
\end{eqnarray}
endows a dendriform conformal algebra structure on $M$. Therefore
there is an associated associative conformal algebra structure on
$M$ and $T:M\rightarrow A$ is a homomorphism of associative
conformal algebras. Moreover, $T(M)\subset A$ is an associative
conformal subalgebra of $A$ and there is also a dendriform
conformal algebra structure on $T(M)$ defined by
\begin{eqnarray}\label{ww1}
T(u)\succ_\lambda T(v)=T(u\succ_\lambda v),~~T(u)\prec_\lambda T(v)=T(u\prec_\lambda v),~~~\text{$u$, $v\in M$.}
\end{eqnarray}
Furthermore, the associated associative conformal algebra on
$T(M)$ is a subalgebra of $A$ and $T: M\rightarrow A$ is a
homomorphism of dendriform conformal algebras.
\end{pro}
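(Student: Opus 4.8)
The plan is to verify the five defining axioms of a dendriform conformal algebra for $(M,\prec_\lambda,\succ_\lambda)$ directly, using the $\mathcal{O}$-operator identity as the organizing principle. The first observation I would record is that, since $u\ast_\lambda v=u\succ_\lambda v+u\prec_\lambda v=l_A(T(u))_\lambda v+r_A(T(v))_{-\lambda-\partial}u$, the defining relation of an $\mathcal{O}$-operator is exactly
\[
T(u)_\lambda T(v)=T(u\ast_\lambda v),\qquad\forall\ u,v\in M.
\]
The two conformal sesquilinearity relations for each of $\succ_\lambda$ and $\prec_\lambda$ are then immediate from the hypothesis that $T$ is a $\mathbb{C}[\partial]$-module homomorphism together with the left/right module sesquilinearity axioms (LM1) and (RM1); for instance $(\partial u)\succ_\lambda v=l_A(\partial T(u))_\lambda v=-\lambda\, l_A(T(u))_\lambda v=-\lambda\, u\succ_\lambda v$.

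The heart of the argument is the three associativity-type identities, each of which I would reduce to a single bimodule axiom after inserting the displayed identity. For $(u\prec_\lambda v)\prec_{\lambda+\mu}w=u\prec_\lambda(v\ast_\mu w)$, expanding the left-hand side gives $r_A(T(w))_{-\lambda-\mu-\partial}(r_A(T(v))_{-\lambda-\partial}u)$, which the right-module associativity axiom $r_A(b)_{-\lambda-\mu-\partial}(r_A(a)_{-\lambda-\partial}v)=r_A(a_\mu b)_{-\lambda-\partial}v$ rewrites as $r_A(T(v)_\mu T(w))_{-\lambda-\partial}u$, and then $T(v)_\mu T(w)=T(v\ast_\mu w)$ produces the right-hand side. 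For $(u\succ_\lambda v)\prec_{\lambda+\mu}w=u\succ_\lambda(v\prec_\mu w)$, the left-hand side is $r_A(T(w))_{-\lambda-\mu-\partial}(l_A(T(u))_\lambda v)$, which the compatibility axiom (\ref{eq1}) turns directly into $l_A(T(u))_\lambda(r_A(T(w))_{-\mu-\partial}v)=u\succ_\lambda(v\prec_\mu w)$. For $u\succ_\lambda(v\succ_\mu w)=(u\ast_\lambda v)\succ_{\lambda+\mu}w$, the left-hand side is $l_A(T(u))_\lambda(l_A(T(v))_\mu w)$, which the left-module associativity axiom collapses to $l_A(T(u)_\lambda T(v))_{\lambda+\mu}w=l_A(T(u\ast_\lambda v))_{\lambda+\mu}w$, again the right-hand side. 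This establishes that $(M,\prec_\lambda,\succ_\lambda)$ is a dendriform conformal algebra with associated associative product $\ast_\lambda$.

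For the remaining assertions I would argue as follows. The displayed identity together with $\mathbb{C}[\partial]$-linearity of $T$ says precisely that $T\colon(M,\ast_\lambda)\to A$ is a homomorphism of associative conformal algebras, so $T(M)$ is a $\mathbb{C}[\partial]$-submodule closed under the $\lambda$-product, hence an associative conformal subalgebra of $A$. To transport the dendriform structure to $T(M)$ by (\ref{ww1}), the one point needing care is well-definedness, i.e.\ that $\ker T$ is an ideal for both $\succ_\lambda$ and $\prec_\lambda$. I would check this directly: if $T(u)=0$ then $u\succ_\lambda v=l_A(0)_\lambda v=0$ and $v\prec_\lambda u=r_A(0)_{-\lambda-\partial}v=0$; moreover $T(u\prec_\lambda v)=T(u\ast_\lambda v)-T(u\succ_\lambda v)=T(u)_\lambda T(v)=0$, and symmetrically $T(v\succ_\lambda u)=T(v\ast_\lambda u)-T(v\prec_\lambda u)=0$. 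Thus $\ker T$ is a dendriform ideal, the dendriform structure descends to $T(M)\cong M/\ker T$, and the descended products are exactly those of (\ref{ww1}).

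Finally, the associated associative product of this structure on $T(M)$ satisfies $T(u)\ast_\lambda T(v)=T(u\succ_\lambda v)+T(u\prec_\lambda v)=T(u\ast_\lambda v)=T(u)_\lambda T(v)$, so it coincides with the restriction of the product of $A$; hence the associated associative conformal algebra on $T(M)$ is a subalgebra of $A$, and the relations (\ref{ww1}) assert exactly that $T$ intertwines $\succ_\lambda,\prec_\lambda$ on $M$ with those on $T(M)$, i.e.\ $T$ is a homomorphism of dendriform conformal algebras. I expect the three associativity identities to be the bulk of the computation, although each collapses to one bimodule axiom; the only genuine subtlety is the well-definedness of the dendriform structure on $T(M)$, which the kernel computation above resolves.
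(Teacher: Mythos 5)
Your proposal is correct and follows essentially the same route as the paper: each dendriform axiom is reduced to one bimodule axiom of $(M,l_A,r_A)$ together with the $\mathcal{O}$-operator identity rewritten as $T(u)_\lambda T(v)=T(u\ast_\lambda v)$. You are in fact more thorough than the paper on the final assertions, where your verification that $\ker T$ is a dendriform ideal cleanly justifies the well-definedness of the structure \eqref{ww1} on $T(M)$, a point the paper dismisses as straightforward.
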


\begin{proof}
For all $u$, $v$, $w\in M$,  we have
\begin{eqnarray*}
&&(u\prec_\lambda v)\prec_{\lambda+\mu}w-u\prec_\lambda(v\prec_\mu w+v\succ_\mu w)\\
&=&r_A(T(w))_{-\lambda-\mu-\partial}(r_A(T(v))_{-\lambda-\partial}u)-r_A(r_A(T(w))_{-\mu-\partial}v)_{-\lambda-\partial}u
-r_A(l_A(T(v))_\mu w)_{-\lambda-\partial}u\\
&=&r_A(T(v)_\mu T(w))_{-\lambda-\partial}u-r_A(r_A(T(w))_{-\mu-\partial}v)_{-\lambda-\partial}u
-r_A(l_A(T(v))_\mu w)_{-\lambda-\partial}u\\
&=&r_A(T(v)_\mu T(w)-r_A(T(w))_{-\mu-\partial}v-l_A(T(v))_\mu w)_{-\lambda-\partial}u=0.
\end{eqnarray*}
Similarly, we have $$u\succ_\lambda(v\succ_\mu w)=(u\succ_\lambda
v+u\prec_\lambda v)\succ_{\lambda+\mu}w,\;\;\forall\ u,v,w\in M.$$
Moreover, since $(M,l_A,r_A)$ is a bimodule of $A$, we have
$$(u\succ_\lambda v)\prec_{\lambda+\mu}w=u\succ_\lambda(v\prec_\mu
w),\;\;\forall\ u,v,w\in M.$$ Hence $(M,
\prec_\lambda,\succ_\lambda)$  is a dendriform conformal algebra.
Moreover, the other conclusions follow straightforwardly.
Therefore the conclusion holds.
\end{proof}

\begin{cor}\label{ll1}
Let $(A, \ast_\lambda)$ be an associative conformal algebra. There is a compatible dendriform conformal algebra structure on $A$ if and only if there exists a bijective $\mathcal{O}$-operator $T: M\rightarrow A$  associated with some bimodule $(M, l_A, r_A)$ of $A$.
\end{cor}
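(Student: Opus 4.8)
The plan is to prove the two implications separately, using Propositions \ref{pro:bim} and \ref{cor1} as the essential inputs; essentially all the content has already been established there, so the corollary is a matter of assembling them correctly together with the bijectivity hypothesis.

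For the forward direction, suppose $(A,\prec_\lambda,\succ_\lambda)$ is a compatible dendriform conformal algebra structure on $(A,\ast_\lambda)$. By Proposition \ref{pro:bim}, the triple $(A,L_\succ,R_\prec)$ is a bimodule of $(A,\ast_\lambda)$ and the identity map $I\colon A\rightarrow A$ is an $\mathcal{O}$-operator associated with it. Since $I$ is obviously a bijective $\mathbb{C}[\partial]$-module homomorphism, taking $M=A$, $l_A=L_\succ$, $r_A=R_\prec$ and $T=I$ produces the desired bijective $\mathcal{O}$-operator, so this direction is immediate.

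For the converse, suppose $T\colon M\rightarrow A$ is a bijective $\mathcal{O}$-operator associated with a bimodule $(M,l_A,r_A)$. By Proposition \ref{cor1}, the products $u\succ_\lambda v=l_A(T(u))_\lambda v$ and $u\prec_\lambda v=r_A(T(v))_{-\lambda-\partial}u$ make $M$ into a dendriform conformal algebra, and the same proposition furnishes a dendriform conformal algebra structure on $T(M)$ via (\ref{ww1}). Because $T$ is bijective we have $T(M)=A$, so this transported structure is defined on all of $A$; I would take it as the candidate compatible dendriform structure.

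The one point requiring verification is compatibility, namely that the associated associative product of the transported dendriform structure on $A$ equals the original $\lambda$-product $\ast_\lambda$ of $A$. This is exactly where the $\mathcal{O}$-operator identity enters: for $u,v\in M$ one computes
\[
T(u)\ast_\lambda T(v)=T(u\succ_\lambda v)+T(u\prec_\lambda v)=T(l_A(T(u))_\lambda v)+T(r_A(T(v))_{-\lambda-\partial}u)=T(u)_\lambda T(v),
\]
where the last equality is the defining relation of an $\mathcal{O}$-operator. Since $T$ is surjective, every pair of elements of $A$ has the form $(T(u),T(v))$, so the associated associative product coincides with $\ast_\lambda$ and the transported dendriform structure is indeed compatible. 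I expect no serious obstacle here; the only subtlety is recognizing that bijectivity of $T$ upgrades the ``subalgebra'' conclusion of Proposition \ref{cor1} on $T(M)$ to a statement about all of $A$, while the $\mathcal{O}$-operator condition guarantees the matching of the two associative structures.
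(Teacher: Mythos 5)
Your proposal is correct and follows essentially the same route as the paper: Proposition \ref{pro:bim} gives the forward direction with $T=I$, and the converse uses Proposition \ref{cor1} to transport the dendriform structure from $M$ to $A=T(M)$, which is exactly the paper's formula $a\succ_\lambda b=T(l_A(a)_\lambda T^{-1}(b))$, $a\prec_\lambda b=T(r_A(b)_{-\lambda-\partial}T^{-1}(a))$ in disguise. Your explicit verification that the associated associative product coincides with $\ast_\lambda$ fills in what the paper dismisses as ``a straightforward checking.''
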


\begin{proof}
Suppose that there is a compatible dendriform conformal algebra structure
$(A,\succ_\lambda,\prec_\lambda)$ on $A$. Then by Proposition~\ref{pro:bim}, the identity map $I: A\rightarrow A$ is a bijective $\mathcal{O}$-operator of $A$ associated with $(A, L_\succ, R_\prec)$.

Conversely, suppose that there exists a bijective $\mathcal{O}$-operator $T: M\rightarrow A$ of $(A,\ast_\lambda)$ associated with a bimodule $(M,l_A, r_A)$. Then by Proposition~\ref{cor1} with a straightforward checking, we have
\begin{eqnarray}
a\succ_\lambda b=T(l_A(a)_\lambda T^{-1}(b)),~~~a\prec_\lambda
b=T({r_A(b)}_{-\lambda-\partial}T^{-1}(a)),\;\;\forall\ a,b\in A,
\end{eqnarray}
defines a compatible dendriform conformal algebra structure on $A$.
\end{proof}

Finally, there is a construction of (antisymmetric) solutions of associative conformal Yang-Baxter equation from dendriform conformal algebras.

\begin{thm}\label{the1}
Let $(A, \succ_\lambda, \prec_\lambda)$ be a finite dendriform conformal algebra which is free
as a $\mathbb{C}[\partial]$-module. Then
\begin{eqnarray}\label{eqn4}
r=\sum_{i=1}^n(e_i\otimes e_i^\ast-e_i^\ast\otimes e_i)
\end{eqnarray}
is a solution of associative conformal Yang-Baxter equation in the
associative conformal algebra $A\ltimes_{R_A^\ast, L_A^\ast}
A^{\ast c}$, where $\{e_1,\cdots,e_n\}$ is a $\mathbb
C[\partial]$-basis of $A$ and $\{e_1^\ast,\cdots, e_n^\ast\}$ is
the dual $\mathbb C[\partial]$-basis of $A^{\ast c}$.
\end{thm}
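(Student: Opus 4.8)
The plan is to realize $r$ as the antisymmetric element $r_T-\tau r_T$ attached to an $\mathcal{O}$-operator via Theorem~\ref{thh2}, and to use Proposition~\ref{pro:bim} to supply that $\mathcal{O}$-operator. Concretely, I would first pass to the associated associative conformal algebra $(A,\ast_\lambda)$, which is again finite and free as a $\mathbb{C}[\partial]$-module because $A$ is. By Proposition~\ref{pro:bim}, $(A,L_\succ,R_\prec)$ is a finite free bimodule of $(A,\ast_\lambda)$ and the identity map $I:A\to A$ is an $\mathcal{O}$-operator associated with $(A,L_\succ,R_\prec)$. This is precisely the hypothesis needed to run Theorem~\ref{thh2} with $M=A$ and $(l_A,r_A)=(L_\succ,R_\prec)$.

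The one explicit point to verify is which conformal linear map $T\in Chom(A,A)$ corresponds to $r$ under the isomorphism $Chom(A,A)\cong A\otimes A^{\ast c}$ used in Theorem~\ref{thh2}. Writing $T_\lambda(e_i)=\sum_j g_{ij}(\lambda,\partial)e_j$ as in the proof of that theorem, the associated element is $r_T=\sum_{i,j}g_{ij}(-\partial^{\otimes^2},\partial\otimes 1)\,e_j\otimes e_i^\ast$. Taking $g_{ij}(\lambda,\partial)=\delta_{ij}$ makes $r_T=\sum_{i=1}^n e_i\otimes e_i^\ast$, so that $r_T-\tau r_T=\sum_{i=1}^n(e_i\otimes e_i^\ast-e_i^\ast\otimes e_i)$ is exactly the element $r$ in the statement. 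For this $T$ one has $T_\lambda(e_i)=e_i$, extended by conformal sesquilinearity through $T_\lambda(\partial e_i)=(\partial+\lambda)e_i$, and its specialization at $\lambda=0$ is $T_0=I$, the identity $\mathbb{C}[\partial]$-module homomorphism. I would take care here not to confuse the conformal linear map $T$, which is \emph{not} $\mathbb{C}[\partial]$-linear, with $T_0=I$; note also that Theorem~\ref{thh2} only tests $T_0$, so the $\lambda$-dependence of $T$ beyond $\lambda=0$ is irrelevant.

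Since $T_0=I$ is exactly the $\mathcal{O}$-operator furnished by Proposition~\ref{pro:bim}, Theorem~\ref{thh2} then yields directly that $r=r_T-\tau r_T$ is an antisymmetric solution of the associative conformal Yang-Baxter equation in the semi-direct product determined by the dual bimodule $(A^{\ast c},R_\prec^\ast,L_\succ^\ast)$ of $(A,L_\succ,R_\prec)$, i.e. in $A\ltimes_{R_\prec^\ast,L_\succ^\ast}A^{\ast c}$. I expect no genuine analytic obstacle: all the substantive work is already packaged in Theorem~\ref{thh2} and Proposition~\ref{pro:bim}, so the argument reduces to the bookkeeping above. The only subtlety worth flagging is the identification of the bimodule governing the semi-direct product: it is built from the dendriform products $\succ,\prec$ (so the subscripts $R_A^\ast,L_A^\ast$ in the statement are to be read as $R_\prec^\ast,L_\succ^\ast$), and one should match this against the explicit $\lambda$-products of $A\ltimes A^{\ast c}$ before concluding.
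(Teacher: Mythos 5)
Your proposal is correct and follows exactly the paper's own route: the paper's proof is just the two sentences ``by Proposition~\ref{pro:bim}, $T=I$ is an $\mathcal{O}$-operator associated with $(A,L_\succ,R_\prec)$; then by Theorem~\ref{thh2} the conclusion holds,'' and you have simply filled in the bookkeeping (identifying the $T$ with $r_T=\sum_i e_i\otimes e_i^\ast$ and $T_0=I$) that the paper leaves implicit. Your reading of the subscripts in $A\ltimes_{R_A^\ast,L_A^\ast}A^{\ast c}$ as the dual of the bimodule $(A,L_\succ,R_\prec)$ is the right one and is a worthwhile clarification.
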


\begin{proof}
By Proposition ~\ref{pro:bim}, $T=I: A\rightarrow A $ is an $\mathcal{O}$-operator associated with $(A, L_\succ, R_\prec)$.
Then by Theorem \ref{thh2}, the conclusion holds.
\end{proof}


{\bf Acknowledgments}
{This work was supported by the National Natural Science Foundation of China
(11425104, 11931009), the Zhejiang Provincial Natural Science Foundation of China
(LY20A010022) and the Scientific Research Foundation of Hangzhou Normal University (2019QDL012).
C. Bai is also supported by the Fundamental Research Funds for the Central Universities and Nankai ZhiDe
Foundation.}

\end{document}